\documentclass[11pt, a4paper]{amsart}
\usepackage[top=35truemm,bottom=35truemm,left=35truemm,right=35truemm]{geometry}
\usepackage{setspace}
\usepackage{amsmath, amsfonts,amsthm,amssymb,mathrsfs,amscd}
\usepackage{enumerate}
\usepackage[dvipdfmx]{graphicx}
\usepackage{ascmac}
\usepackage[arrow,matrix]{xy}
\usepackage{color}
\usepackage{array}
\usepackage{pifont}
\usepackage{longtable}
\usepackage{here}
\usepackage{braket}
\usepackage{mathtools}

\usepackage{tikz}

\newtheorem{theo}{Theorem}[section]
\newtheorem*{theo*}{Theorem}
\newtheorem{defi}{Definition}[section]
\newtheorem{exa}{Example}[section]

\newtheorem{cor}{Corollary}[section]
\newtheorem{lem}{Lemma}[section]

\newtheorem{introthm}{Theorem}

\providecommand{\abs}[1]{\left\lvert#1\right\rvert}

\makeatletter
\@addtoreset{equation}{section}

\makeatother

\def\G{\mathcal{G}}
\def\Z{\mathbb{Z}}
\def\F{\mathbb{F}}

\def\N{\mathbb{N}}

\def\R{\mathbb{R}}

\def\({\left(}
\def\){\right)}
\def\<{\langle}
\def\>{\rangle}

\begin{document}

\title[ ]{The space of marked Dyer systems, monotonicity, and continuity of growth rates}
\author{Tomoshige Yukita}
\address{Liberal Arts Education Center, Ashikaga University, Omaecho 2680-1,Ashikaga,Tochigi, 326-8558, Japan}
\email{yukita.tomoshige@g.ashikaga.ac.jp}
\subjclass[2020]{Primary~20F55, Secondary~20F65}
\keywords{marked groups, Dyer systems, Coxeter groups, growth rates}
\date{}
\thanks{}

\begin{abstract}
The space $\mathcal{G}_n$ of $n$-marked groups provides a natural framework for studying algebraic and geometric invariants under deformation.
In general, the growth rate is not continuous on $\mathcal{G}_n$.
In this paper, we investigate the subspace $\mathcal{D}_n \subset \mathcal{G}_n$ consisting of $n$-marked Dyer systems,
which extend Coxeter systems and include graph products of cyclic groups and right-angled Artin groups.
We prove that $\mathcal{D}_n$ is closed in $\mathcal{G}_n$ and introduce a natural partial order on $\mathcal{D}_n$ with respect to which the growth rate is monotonically increasing.
As a consequence, the growth rate function $\tau : \mathcal{D}_n \to \mathbb{R}_{\ge 1}$ is continuous.
The proof combines the solution to the word problem for Dyer systems by Paris and Soergel, the parabolic growth formula by Paris and Varghese, and analytic arguments based on normal convergence and Hurwitz's theorem.
This extends the continuity results known for Coxeter systems to the broader class of Dyer systems.
\end{abstract}

\maketitle
%%%%%%%%%%%%%%%%%%%%%%%%%%%%%%%%%%%%%%%%%%%
%%%%%%%%%%%%%%%%%%%%%%%%%%%%%%%%%%%%%%%%%%%
\section{Introduction}
The space $\mathcal{G}_n$ of $n$-marked groups,
introduced by Grigorchuk \cite{Grigorchuk1984},
provides a natural framework for studying groups from a deformation-theoretic viewpoint.
A point of $\mathcal{G}_n$ is an isomorphism class of a pair $(G, S)$ consisting of a group $G$ and an ordered generating set $S=(s_1,\dots,s_n)$.
Fixing a free group $\F_n$ of rank $n$ with a fixed free marking,
each $(G, S)$ corresponds to an epimorphism $\F_n \twoheadrightarrow G$,
and convergence in $\mathcal{G}_n$ reflects stabilization of relations in bounded word length.
Thus $\mathcal{G}_n$ may be regarded as a deformation space of marked groups of rank $n$.

\smallskip
Within this space, it is natural to investigate how algebraic and geometric invariants behave under convergence.
Among such invariants, the growth rate
\[
\tau(G,S)=\lim_{m\to\infty}\sqrt[m]{b_{(G, S)}(m)}
\]
plays a fundamental role,
where $b_{(G, S)}(m)$ is the number of elements of $G$ whose word lengths with respect to $S$ are at most $m$.
Geometrically,
$\tau(G, S)$ is the volume entropy of the Cayley graph associated with the marked group $(G, S)$.
Since the growth rate depends on the chosen generating set,
it should be viewed as an invariant of the \emph{marked object},
rather than of the abstract group $G$.

\smallskip
In general, the growth rate function $\tau:\mathcal{G}_n \to \mathbb{R}_{\ge 1}$ is not continuous.
Nevertheless, from the deformation viewpoint, it is natural to ask whether geometric constraints imply continuity on the growth rate.
In the hyperbolic setting, Fujiwara and Sela \cite{FujiwaraSela2023} proved that the growth rate varies continuously with the marking and that the set of growth rates of a fixed hyperbolic group is well-ordered.
Fujiwara \cite{Fujiwara2025} extended this continuity phenomenon to certain acylindrically hyperbolic groups.
These results suggest that geometry may enforce continuity of growth rates within suitable subspaces.

\smallskip
In the context of Coxeter systems, continuity phenomena are also closely related to geometric convergence of hyperbolic reflection groups.
Work of Floyd \cite{Floyd1992} and Kolpakov \cite{Kolpakov2012} on sequences of hyperbolic Coxeter polyhedra in dimensions two and three demonstrates continuity of growth rates under geometric convergence.

\smallskip
For Coxeter systems, continuity of growth rates on the space of marked Coxeter systems was established in \cite{Yukita2024}.
Two structural ingredients play a decisive role there:
the monotonicity of growth rates under increases of Coxeter parameters, proved by Terragni \cite{Terragni2016},
and Steinberg's formula expressing the reciprocal of the growth series as an alternating sum over parabolic subgroups \cite{Davis2025}.
The guiding principle is that \emph{monotonicity combined with analytic control of growth series forces continuity of the growth rate}.

\smallskip
Dyer systems, introduced by Dyer \cite{Dyer1990}, form a natural extension of Coxeter systems.
They include Coxeter groups, graph products of cyclic groups, and right-angled Artin groups as special cases.
Recent advances provide the two key structural tools required for growth analysis:
Paris and Soergel \cite{ParisSoergel2023} established a solution to the word problem via a theory of reduced syllabic words,
while Paris and Varghese \cite{ParisVarghese2024} proved a parabolic decomposition formula for growth series, generalizing Steinberg's formula.
These developments suggest that Dyer systems constitute the largest natural class in which the Coxeter strategy for growth can be extended.

\smallskip
The aim of this paper is to study the subspace $\mathcal{D}_n \subset \mathcal{G}_n$
consisting of $n$-marked Dyer systems and to analyze the behavior of growth rates on this space.
Our main results are the following.

\begin{introthm}[Compactness of $\mathcal{D}_n$]
The space $D_n$ is closed in $\mathcal{G}_n$.
\end{introthm}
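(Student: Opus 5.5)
The plan is sequential compactness in the space of parameters combined with a \emph{truncation lemma} extracted from the Paris--Soergel solution of the word problem. Since $\mathcal{G}_n$ is metrizable, it suffices to take a sequence $(G_k,S_k)\in\mathcal{D}_n$ converging to some $(G,S)\in\mathcal{G}_n$ and to show that $(G,S)$ is itself a marked Dyer system. I encode each $(G_k,S_k)$ by its Dyer graph data: the vertex labels $m_k(i)\in\{2,3,\dots,\infty\}$, which are the orders of $s_i$, and the edge labels $m_k(i,j)\in\{2,3,\dots,\infty\}$, where $\infty$ means there is no relation. These data are subject to the Dyer condition: $m_k(i,j)\neq 2,\infty$ only if $m_k(i)=m_k(j)=2$.

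First I extract a limit Dyer graph. The set $\{2,3,\dots\}\cup\{\infty\}$ is compact, and there are finitely many labels. So after passing to a subsequence, every label converges: it is either eventually constant with a finite value, or it tends to $\infty$. Let $\Gamma$ be the graph with the limiting labels. The Dyer condition passes to the limit, since it is a closed condition: an eventually constant value $\neq 2$ on a vertex stays $\neq 2$, and a vertex label tending to $\infty$ is eventually $\neq 2$, which forces the incident edge labels to be $2$ or $\infty$. Let $(W_\Gamma,S_\Gamma)$ be the associated marked Dyer system. Since limits in $\mathcal{G}_n$ are unique, the goal becomes showing $(G_k,S_k)\to(W_\Gamma,S_\Gamma)$. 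Concretely, for each $R$ and all large $k$, a word of length $\le R$ in $\F_n$ should be trivial in $G_k$ if and only if it is trivial in $W_\Gamma$.

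Next I prove a truncation lemma. For a Dyer graph $\Delta$ and $R\ge1$, let $\Delta^{(R)}$ be the graph obtained by replacing every label larger than $2R$ with $\infty$. The lemma states that $W_\Delta$ and $W_{\Delta^{(R)}}$ have the same set of relations of length $\le R$. To prove it I would use the Paris--Soergel theory of reduced syllabic words. A word of length $\le R$ is trivial in $W_\Delta$ if and only if a sequence of syllabic operations reduces it to the empty word. These operations are merging and cancelling syllables, replacing $x^a$ by $x^{a-m(v)}$, commuting syllables, and applying braid moves. They never increase syllabic length. Moreover, a power reduction modulo $m(v)$, or a braid move of length $m(u,v)$, can only be applied to a word containing a syllable of exponent comparable to $m(v)$ or an alternating subword of length $m(u,v)$. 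Neither is possible when these parameters exceed $2R$ and the word has length $\le R$. I expect this step to be the main obstacle. The difficulty is not that each individual operation respects the length bound, but bookkeeping: one must check that the Paris--Soergel reduction algorithm really stays within words of length $\le R$, or at least within a bounded range on which large parameters remain invisible. If necessary, I would replace $2R$ by a larger explicit constant depending on $R$.

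Finally I conclude. For fixed $R$, the subsequence of labels stabilizes after truncation: for all large $k$ we have $\Gamma_k^{(R)}=\Gamma^{(R)}$. Indeed, finite limiting labels are eventually equal to their limits, and labels tending to $\infty$ eventually exceed $2R$. By the truncation lemma, for large $k$ the relations of length $\le R$ of $G_k=W_{\Gamma_k}$, of $W_{\Gamma^{(R)}}$, and of $W_\Gamma$ all coincide. Hence $(G_k,S_k)\to(W_\Gamma,S_\Gamma)$ along the subsequence, so $(G,S)=(W_\Gamma,S_\Gamma)\in\mathcal{D}_n$, and $\mathcal{D}_n$ is closed.
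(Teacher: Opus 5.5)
Your argument is correct, and it takes a genuinely different route from the paper. The paper never studies short relations in a Dyer group directly. It shows that the bijection $\Phi:\mathcal{DM}_n\to\mathcal{D}_n$ is continuous in three steps: it realizes $(W_\Gamma,S)$, via Soergel's theorem, as the marked subgroup $\langle\phi(V(\Gamma))\rangle$ of the Coxeter group of the induced Coxeter graph; it applies the known convergence criterion for marked Coxeter systems; and it pushes convergence down to these subgroups with Lemma~\ref{lem:convergence of subgroups}. Closedness then follows because $\mathcal{DM}_n$ is compact and $\mathcal{G}_n$ is Hausdorff. You replace all of this by a truncation lemma proved directly from Paris--Soergel, followed by uniqueness of limits.

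The step you flag does close, but not for the reason you give. Non-increase of syllabic degree does not control exponents, and a free move $x^a\mapsto x^{a-f(v)}$ can even increase them. The invariant you need is the exponent sum $\exp$ from Section~\ref{section:5}, with syllables taken as group elements written with minimal exponents. It never increases along $M$-operations:
\begin{itemize}
\item a type~I move gives $|c|\le|a+b|\le|a|+|b|$;
\item a commutation only permutes syllables;
\item a braid move of length $m\ge 3$ involves only order-two generators with exponent $1$.
\end{itemize}
Read a word of length $\le R$ letter by letter; it then has $\exp\le R$. By Corollary~\ref{cor:reduced syllabic word}(2), it is trivial in $W_\Delta$ if and only if some sequence of moves takes it to $\boldsymbol{1}$. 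Along that sequence, every merge at a vertex with $f(x)>2R$ has $|a+b|\le R<f(x)/2$. So it is literally $x^{a+b}$, and it is deleted if and only if $a+b=0$. Likewise every braid move has length $m\le\exp\le R$. Hence the same sequence is valid for $\Delta^{(R)}$. The reverse inclusion of relations is just the quotient map $W_{\Delta^{(R)}}\twoheadrightarrow W_\Delta$. Note also that $\Delta^{(R)}$ is again a Dyer graph, since $2R\ge 2$.

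The two routes buy different things. The paper's proof is short once the Coxeter case and Soergel's embedding are available, and Lemma~\ref{lem:convergence of subgroups} is a reusable tool. Your proof is self-contained modulo Paris--Soergel, which the paper needs anyway for monotonicity. It also avoids passing to a Coxeter overgroup of larger rank. Finally, it is quantitative: two marked Dyer systems whose matrices agree after truncation at $2R$ are at distance at most $e^{-R}$. This gives continuity of $\Phi$, and hence compactness of $\mathcal{D}_n$, not just closedness.
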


\begin{introthm}[Monotonicity of growth rates]
If $(D,S) \preceq (D',S')$ in the natural partial order on Dyer systems,
then $a_{(D,S)}(m) \le a_{(D',S')}(m)$ for all $m \ge 0$.
In particular, $\tau(D,S)\le \tau(D',S')$.
\end{introthm}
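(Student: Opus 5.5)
We have to guess the setup. A Dyer system is given by a simple graph $\Gamma$ on vertex set $\{1,\dots,n\}$, edge labels $m(i,j)\in\{2,3,\dots,\infty\}$, and vertex labels $p(i)\in\{2,3,\dots,\infty\}$, subject to the usual constraint that $m(i,j)$ must be $2$ unless $p(i)=p(j)=2$. The natural partial order is presumably $(D,S)\preceq(D',S')$ when $p(i)\le p'(i)$ and $m(i,j)\le m'(i,j)$ for all $i,j$, with $\infty$ as the top element. Here $a_{(D,S)}(m)$ counts the elements of word length exactly $m$.

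The plan is to construct, for each $m$, an injection from the sphere of radius $m$ in $(D,S)$ into the sphere of radius $m$ in $(D',S')$. We use the Paris--Soergel solution to the word problem. Every element has a reduced syllabic normal form: a word $s_{i_1}^{k_1}\cdots s_{i_r}^{k_r}$ with syllable exponents $0<|k_j|\le p(i_j)/2$, chosen suitably for even orders. Its length is $\sum_j |k_j|$ under the appropriate length convention, and it is characterized by the fact that no sequence of braid moves and syllable merges reduces it.

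\textbf{Step 1.} Fix a canonical normal form for each element of $D$, for instance the lexicographically minimal reduced syllabic word of minimal length.

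\textbf{Step 2.} Send this word to the same formal word in $D'$. Syllable exponents stay admissible, since $|k|\le p(i)/2\le p'(i)/2$. Geodesic words in $D$ are reduced syllabic words, so the length is preserved provided the image word is still reduced in $D'$.

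\textbf{Step 3.} Show the map is injective. If two distinct canonical words gave equal elements of $D'$, Paris--Soergel would say they are related by elementary operations valid in $D'$.

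\textbf{Step 4 (main obstacle).} Verify that reducedness and distinctness transfer from $D$ to $D'$. A word reduced in $D$ could a priori become non-reduced in $D'$ only through a relation that exists in $D'$ but not in $D$. However, every relation of $D'$ is a relation of $D$ in the weaker sense: a braid relation of length $m'(i,j)$, or a power relation $s^{p'}$, is implied in $D$ whenever $m\le m'$, or vice versa. The direction must be checked carefully. Larger parameters mean fewer relations, so $D'$ is "freer". A reduction in $D'$ requires a subword of the form (alternating word of length $m'$), and that subword already admits a reduction in $D$, because $\mathrm{alt}_{m'}$ contains $\mathrm{alt}_m$ when $m\le m'$. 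Likewise, a syllable with $|k|>p'/2$ already exceeds $p/2$.

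A cleaner alternative for Step 4 is to use the Paris--Soergel criterion directly: a word is reduced if and only if it contains, up to the allowed shuffles, no "forbidden" subword. One then checks that every forbidden pattern for $D'$ contains a forbidden pattern for $D$. Injectivity would follow by the same reasoning, applied to the characterization of when two reduced words represent the same element. The hardest case is the transition where $p(i)=2$ but $p'(i)>2$, because then the edges at $i$ must be relabelled $2$. Here I would reduce to changing one parameter at a time, and handle the case where a vertex order rises from $2$ separately, using the free/direct product structure. The inequality $\tau(D,S)\le\tau(D',S')$ then follows by summing the spheres and taking $m$-th roots.
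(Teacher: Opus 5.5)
Your architecture is the paper's: break $\preceq$ into one-parameter steps, send the ShortLex-minimal reduced syllabic word $\sigma_M(w)$ to the same formal word in $D'$, and control the result with the Paris--Soergel solution of the word problem. But the setup is off in two ways that matter.

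First, the order in the statement is defined by an injective graph morphism $\phi\colon\Gamma\to\Gamma'$, so $D$ may have fewer generators than $D'$, and your argument never leaves a fixed vertex set. The missing step is as follows. Let $\Gamma''$ be the full subgraph of $\Gamma'$ on $\phi(V(\Gamma))$, with Dyer system $(D'',S'')$. Then $(D,S)\preceq(D'',S'')$ on a common vertex set, and you need that the word metric of $D'$ restricts to that of $D''$ (\cite[Lemma 2.5]{ParisSoergel2023}), so that spheres of $D''$ sit inside spheres of $D'$.

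Second, the Dyer condition (as in the paper's Dyer graphs and matrices) is $m_{ij}\in\{2,\infty\}$, not $m_{ij}=2$, when $f_i\ge 3$ or $f_j\ge 3$. With this correction, your ``hardest case'' $p(i)=2<p'(i)$ needs no separate product-structure argument. Since $m(i,j)\le m'(i,j)\in\{2,\infty\}$, first raise or add all edges (an edge at $i$ of finite weight $\ge 3$goes to $\infty$), and only then raise vertex weights. In a vertex step the type II operations for $M$ and $M'$ correspond under $\widetilde{\phi}$, so the transfer is immediate. (Under your own constraint the case is vacuous, since $m(i,j)\le m'(i,j)=2$.)

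The core of Step 4 does not work as written, and the heuristic behind it is false. A relation of $D'$ need not hold in $D$: $(st)^3=1$ does not imply $(st)^4=1$, and in general there is no marked homomorphism between $D$ and $D'$. Moreover, an alternating block of length $m'$ need not occur in the word you start from, only in an intermediate word of an $M'$-rewriting sequence. Also, containing $[s_i,s_j]_m$ yields no reduction in $D$.

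What is needed is the argument in Lemma~\ref{lem:syllabic inclusion}. Take the \emph{first} step of the sequence that uses the new braid relation. All earlier steps are legitimate $M$-operations, so by Corollary~\ref{cor:reduced syllabic word}(1) the intermediate word is still reduced for $(D,S)$. But it contains $[s_i,s_j]_{m+1}$ (this is where $m<m'$ strictly is used), and $s_i[s_j,s_i]_m\to s_i[s_i,s_j]_m\to[s_j,s_i]_{m-1}$ shortens it, a contradiction. So no $M'$-sequence starting at $\widetilde{\phi}(\sigma_M(w))$ uses the new relation. Applied to shortening sequences, this shows the image is $M'$-reduced; combined with Theorem~\ref{theo:solution to the word problem of Dyer system}(b), it gives injectivity.

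With this in place, your goal of proving the image \emph{reduced} is worth keeping, and it goes slightly beyond the paper. Type II moves preserve exponent sums and type I moves do not increase them, so a reduced syllabic word with minimal exponents has word length equal to its exponent sum. Hence $\abs{\eta(w)}_{S'}=\abs{w}_S$, and $\eta$ maps spheres into spheres, which is what $a\le a'$ requires. The paper's Lemma~\ref{lem:syllabic reduced to word length} records only $\abs{\eta(w)}_{S'}\le\abs{w}_S$, an injection of balls. That gives $b\le b'$ and $\tau\le\tau'$, but not by itself the sphere inequality.

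Your stated justification for length, that geodesic words are reduced syllabic words, is false. In $\Z\times\Z/2=\langle s,t\rangle$ the geodesic $sts$ compresses to a degree-$3$ syllabic word representing $s^2t$, which has syllabic length $2$. What you need is the converse statement above.
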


\begin{introthm}[Continuity of growth rates]
The growth rate function $\tau : D_n \longrightarrow \mathbb{R}_{\ge 1}$ is continuous.
\end{introthm}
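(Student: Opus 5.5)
We prove continuity sequentially: $\mathcal{G}_n$ is compact and metrizable, and $\mathcal{D}_n$ is closed in it by the first theorem. So it suffices to take a sequence $(D_k,S_k)\to(D,S)$ in $\mathcal{D}_n$ and show $\tau(D_k,S_k)\to\tau(D,S)$.

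The combinatorial data of an $n$-marked Dyer system are a graph on $n$ vertices, vertex orders $p_i\in\{2,3,\dots,\infty\}$ and edge labels $m_{ij}\in\{2,3,\dots,\infty\}$. Convergence in $\mathcal{G}_n$ should mean that, for large $k$, the graph and all finite parameters of the limit are eventually attained. Parameters that are infinite in the limit must tend to infinity along the sequence. This follows from the word problem solution of Paris and Soergel: a relation $s_i^{p}=1$ or a braid-type relation of length $m_{ij}$ holds in the marked group exactly when the corresponding parameter divides, or is bounded by, the stated value. Relations of bounded length therefore stabilize exactly when the parameters do.

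Passing to subsequences, and using that every subsequence has a further subsequence converging to the same limit $\tau(D,S)$, we may assume each parameter is eventually constant or tends to $\infty$. We may further assume the divergent parameters are monotone, so that $(D_k,S_k)$ is eventually increasing in the partial order $\preceq$ and bounded above by $(D,S)$. By the monotonicity theorem, $\tau(D_k,S_k)$ is then nondecreasing and bounded by $\tau(D,S)$, hence converges to some $\tau_\infty\le\tau(D,S)$.

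For the reverse inequality we use the Paris--Varghese formula. It writes $1/W_{(D,S)}(t)$ as a finite alternating sum over spherical (finite) parabolic subsets $T$ of terms built from the growth polynomials of the finite parabolics. The set of spherical subsets is indexed by subsets of $\{1,\dots,n\}$, a finite family. Each finite parabolic growth function of $D_k$ converges coefficientwise, and in fact normally on the unit disc, to that of $D$. A parabolic that becomes infinite in the limit has its term converge to the term for its limit.

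Hence $f_k=1/W_{(D_k,S_k)}$ converges to $f=1/W_{(D,S)}$ uniformly on compact subsets of $\{|t|<1\}$. The reciprocal $1/\tau(D,S)$ is the smallest positive zero of $f$ when $\tau>1$; otherwise the radius of convergence is $1$, and $\tau_\infty\le\tau=1$ already gives equality. When $\tau>1$, this zero lies in the open unit disc, and Hurwitz's theorem gives zeros $t_k$ of $f_k$ with $t_k\to1/\tau(D,S)$. By Pringsheim's theorem (the coefficients are nonnegative), $1/\tau(D_k,S_k)\le |t_k|$. Therefore $\liminf\tau(D_k,S_k)\ge\tau(D,S)$, which gives $\tau_\infty=\tau(D,S)$.

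The main obstacle is the step that passes from convergence in $\mathcal{G}_n$ to eventual parameter convergence together with the normal convergence of $f_k$. The difficulty is that infinite-order generators and edge labels tending to $\infty$ change which subsets are spherical. We must check that each term of the Paris--Varghese sum varies continuously, including the terms whose parabolic becomes infinite in the limit. The first approach to try is an explicit bound: each term is a rational function such as $-t/(1+t)\cdot(1-t^{p-1})/(1-t^{p})$-type expressions in the parameters. These are uniformly bounded on $|t|\le r<1$ and converge as $p\to\infty$.
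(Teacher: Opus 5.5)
Your skeleton matches the paper's proof. The graph description of convergence (Corollary \ref{cor:convergence in Dyer graph}) gives $(D_k,S)\preceq(D,S)$ eventually, and Theorem \ref{theo:monotonicity of growth} then gives $\tau(D_k,S)\le\tau(D,S)$; this is how the paper obtains $r+\varepsilon\le r_k$. The reverse inequality comes from normal convergence of $F_{(D_k,S)}=1/f_{(D_k,S)}$ together with Hurwitz's theorem. Arguing with $\liminf$ instead of by contradiction is only cosmetic.

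The gap is in the analytic step you defer. The functions $F_{(D,S)}$ are only meromorphic on $\{|z|<1\}$. For example, $1/f_{(C_3,\{r\})}(z)=1/(1+2z)$ has a pole at $-1/2$. Such poles persist along sequences with constant parameters: for $C_3\ast C_3$ (two vertices of weight $3$ joined by an edge of weight $\infty$) one has $F(z)=(1-2z)/(1+2z)$.

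Consequently two of your claims are false as stated: that the terms are uniformly bounded on $|t|\le r<1$, and that $F_{(D_k,S)}\to F_{(D,S)}$ uniformly on compact subsets of the disc. Hurwitz's theorem needs holomorphic functions on a fixed domain. So you cannot apply it until you show that, for all large $k$, the $F_{(D_k,S)}$ have no poles on some disc around $1/\tau(D,S)$.

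The paper supplies exactly this as Lemma \ref{lem:convergence of growth series}(2): the poles in the open unit disc of all the $F_{(D_k,S)}$ and of $F_{(D,S)}$ form a locally finite set. It is proved by induction on $\#S$. The cyclic case comes from $(1-z)f_{(C_{2m+1},\{r\})}(z)=1+z-2z^{m+1}$, since a zero in $|z|<\rho$ forces $1-\rho<2\rho^{m+1}$. One then drops finitely many $k$ and shrinks $\varepsilon$.

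Your worry about which subsets are spherical does not arise in the paper's proof of part (1). The paper uses the Paris--Varghese identity summing over \emph{all} proper subsets $T\subsetneq S$ (Theorem \ref{theo:Paris Varghese formula}). Every term is then $F_{(D_T,T)}$ of smaller rank and converges by induction; the real input is the rank-one case $1/f_{(C_p,\{r\})}\to 1/f_{(C_\infty,\{r\})}$. Lemma \ref{lem:subspace of spherical or Euclidean Dyer systems} ensures that $(D_k,S)$ eventually falls in the same case as $(D,S)$ (spherical/Euclidean or not). The spherical/Euclidean case is handled by the product decomposition.
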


The compactness of $D_n$ is obtained by identifying it with the compact space of Dyer matrices,
and by exploiting Soergel’s realization of Dyer systems as finite-index subgroups of Coxeter systems.
Monotonicity follows from the Paris-Soergel theory of reduced syllabic words.
Continuity is deduced from the Paris-Varghese growth formula, normal convergence of associated rational functions on the unit disk, and an application of Hurwitz's theorem.

\smallskip
The organization of this paper is as follows.
In Section \ref{section:2},
we review the space $\mathcal{G}_n$ of marked groups and basic facts on growth functions and growth rates,
including convergence criteria and auxiliary lemmas on subgroups.
In Section \ref{section:3},
we recall the space of marked Coxeter systems and summarize known results on its topology.
In Section \ref{section:4},
we introduce Dyer graphs, Dyer matrices, and the space $\mathcal{D}_n$ of marked Dyer systems,
and prove its compactness together with a characterization of convergence.
In Section \ref{section:5},
we establish monotonicity of growth rates with respect to the natural partial order on Dyer systems.
Finally, in Section \ref{section:6}, we prove the continuity of the growth rate function on $\mathcal{D}_n$.

%%%%%%%%%%%%%%%%%%%%%%%%%%%%%%%%%%%%%%%%%%%
%%%%%%%%%%%%%%%%%%%%%%%%%%%%%%%%%%%%%%%%%%%
\section{The space of marked groups and the growth rates}\label{section:2}
In this section, we recall the space of marked groups (see \cite{ChampetierGuilardel2005, Grigorchuk1984} for further details).
For the reader's convenience, we include proofs of several well-known facts.

\smallskip
For a group $G$ and its ordered finite generating set $S=(s_1, \cdots, s_n)$, 
the pair $(G, S)$ is called an \emph{$n$-marked group}.
Two $n$-marked groups $(G, S)$ and $(H, T)$ are said to be \emph{isomorphic} if the map $S\ni s_i\mapsto t_i\in T$ extends to a group isomorphism $G\to H$.
The set of all isomorphism classes of $n$-marked groups is denoted by $\G_n$ and is called the \emph{space of $n$-marked groups}.

\begin{exa}
Let $\F_n$ denote the free group of rank $n$.
For any two ordered free generating sets $X=(x_1,\dots,x_n)$ and $Y=(y_1,\dots,y_n)$ of $\F_n$,
the $n$-marked free groups $(\F_n, X)$ and $(\F_n, Y)$ are isomorphic.
Such an ordered free generating set is called a free marking of $\F_n$.
\end{exa}

In what follows,
we do not distinguish between free markings of $\F_n$ and fix a free marking $X$.
For any $n$-marked group $(G, S)$, we denote by $\pi_{(G, S)}:\F_n\twoheadrightarrow G$ the epimorphism sending $x_i\in X$ to $s_i\in S$, which is called the \emph{marking of $(G, S)$}.
We also call the ordered finite generating set $S$ the \emph{marking}.
It is straightforward to verify that two $n$-marked groups $(G, S)$ and $(H, T)$ are isomorphic if and only if $\ker{\pi_{(G, S)}}=\ker{\pi_{(H, T)}}$.

\smallskip
Let $(G, S)$ be an $n$-marked group. 
For any $x\in{G}$, 
the word length of $x$ with respect to $S$, denoted by $\abs{x}_S$, is defined by 
\begin{equation*}
\abs{x}_S=\min{\Set{n\geq 0|x=s_1\cdots s_n, \ s_i\in{S\cup{S^{-1}}}}}. 
\end{equation*}
We denote by $B_{(G, S)}(R)$ the ball of radius $R$ centered at $1_G$, that is, 
\begin{equation*}
B_{(G, S)}(R)=\Set{x\in{G}|\abs{x}_S\leq R}. 
\end{equation*}

For abbreviation, we write $B(R)$ instead of $B_{(\F_n, X)}(R)$. 
We define a metric $d$ on $\G_n$ as follows. 
For two $n$-marked groups $(G, S)$ and $(H, T)$, 
let us denote by $v\((G, S), (H, T)\)$ the maximal radius of the balls centered at $1_{\F_n}$ where $\ker{\pi_{(G, S)}}=\ker{\pi_{(H, T)}}$, that is, 
\begin{equation*}
v\((G, S), (H, T)\)=\max{\Set{R\geq{0}|B(R)\cap{\ker{\pi_{(G, S)}}}=B(R)\cap{\ker{\pi_{(H, T)}}}}}. 
\end{equation*}
Then the metric $d$ on $\G_n$ is defined by $\displaystyle d\((G, S), (H, T)\)=e^{-v\((G, S), (H, T)\)}$.
It is known that the metric space $(\G_n, d)$ is compact. 
For convenience,
we record the following characterization of convergence in $\mathcal{G}_n$.

\begin{lem}\label{lem:convergence criteria}
A sequence $\{(G_k, S_k)\}_{k\geq 1}$ of $n$-marked groups converges to $(G, S)$ if and only if for every $R\in \N$, there exists $k_0\in \N$ such that for all $k\geq k_0$,
\[
B(R)\cap \ker{\pi_{(G_k, S_k)}}=B(R)\cap \ker{\pi_{(G, S)}}.
\]
\end{lem}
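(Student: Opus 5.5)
This follows directly from the definition of the metric $d$: a sequence converges to $(G,S)$ exactly when $d\bigl((G_k,S_k),(G,S)\bigr)=e^{-v_k}\to 0$, where $v_k=v\bigl((G_k,S_k),(G,S)\bigr)$. In other words, convergence means $v_k\to\infty$. The one structural fact we need is a monotonicity property of the balls. Since $B(R)\subseteq B(R')$ for $R\le R'$, the equality
\[
B(R')\cap\ker\pi_{(G_k,S_k)}=B(R')\cap\ker\pi_{(G,S)}
\]
implies the same equality with $R'$ replaced by $R$; we simply intersect both sides with $B(R)$. Hence the set of radii at which the two kernels agree is downward closed. Because $B(0)=\{1\}$ lies in every kernel, this set is always nonempty. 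If the two kernels are equal, then $v_k=\infty$ and $d=0$, which we treat as the degenerate case.

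For the forward direction, assume $(G_k,S_k)\to(G,S)$ and fix $R\in\N$. Then $d\bigl((G_k,S_k),(G,S)\bigr)<e^{-R}$ for all $k\ge k_0$, and so $v_k>R$. By the definition of $v$ as a maximum, the kernels agree on $B(v_k)$. Downward closedness then gives agreement on $B(R)$.

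For the converse, assume the displayed condition holds for every $R$. Given $\varepsilon>0$, choose $R$ with $e^{-R}<\varepsilon$ and take the corresponding $k_0$. For $k\ge k_0$ the kernels agree on $B(R)$, so $v_k\ge R$ and $d\le e^{-R}<\varepsilon$.

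There is no real obstacle here. The only point needing care is that the supremum in the definition of $v$ is attained as a maximum when the kernels differ. This holds because the radii are integers and the admissible set is downward closed and bounded, so it is a finite initial segment of $\N$.
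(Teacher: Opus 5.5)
Your argument is correct. It unwinds $d=e^{-v}$ using the observation that agreement of the kernels on $B(R')$ forces agreement on $B(R)$ for $R\le R'$, and you rightly note that the radii must be integers for the maximum in $v$ to be attained. The paper records this lemma without proof as an immediate consequence of the definition of the metric, so your proposal supplies exactly the intended routine verification.
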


We next prove a lemma concerning the convergence of subgroups, which will be used to show that the space of marked Dyer systems is compact.

\begin{lem}\label{lem:convergence of subgroups}
Let $\{(G_k, S_k)\}_{k\geq 1}$ be a convergent sequence of $n$-marked groups and let $\displaystyle \lim_{k\to \infty}(G_k, S_k)=(G, S)$.
Denote the markings of $(G_k, S_k)$ and $(G, S)$ by $\pi_k$ and $\pi$, respectively.
Let $H<\F_n$ be a finitely generated subgroup with an ordered finite generating set $Y=(y_1,\dots,y_m)$.
Then the sequence of $m$-marked groups $\{(\pi_k(H), \pi_k(Y))\}$ converges to $(\pi(H), \pi(Y))$ in the space $\mathcal{G}_m$ of $m$-marked groups.
\end{lem}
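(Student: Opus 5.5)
We will verify the criterion of Lemma~\ref{lem:convergence criteria} in $\mathcal{G}_m$. Let $\F_m$ have free marking $Z=(z_1,\dots,z_m)$, and let $\phi:\F_m\to\F_n$ be the homomorphism with $z_i\mapsto y_i$. Then $\phi(\F_m)=H$, and the marking of $(\pi_k(H),\pi_k(Y))$ is $\pi_k\circ\phi:\F_m\twoheadrightarrow\pi_k(H)$. Likewise, the marking of $(\pi(H),\pi(Y))$ is $\pi\circ\phi$. Consequently
\[
\ker(\pi_k\circ\phi)=\phi^{-1}(\ker\pi_k),\qquad \ker(\pi\circ\phi)=\phi^{-1}(\ker\pi).
\]

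Fix $R\in\N$ and set $L=\max_i\abs{y_i}_X$, which is finite. If $w\in\F_m$ satisfies $\abs{w}_Z\le R$, then $\phi(w)$ is a product of at most $R$ elements of $Y\cup Y^{-1}$, so $\abs{\phi(w)}_X\le LR$. In other words, $\phi$ maps $B_{(\F_m,Z)}(R)$ into $B(LR)$.

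Since $(G_k,S_k)\to(G,S)$, Lemma~\ref{lem:convergence criteria} applied with radius $LR$ gives $k_0$ such that
\[
B(LR)\cap\ker\pi_k=B(LR)\cap\ker\pi \quad\text{for all } k\ge k_0.
\]
Now take $k\ge k_0$ and $w\in B_{(\F_m,Z)}(R)$. Because $\phi(w)\in B(LR)$, we have $\phi(w)\in\ker\pi_k$ if and only if $\phi(w)\in\ker\pi$. Hence
\[
B_{(\F_m,Z)}(R)\cap\ker(\pi_k\circ\phi)=B_{(\F_m,Z)}(R)\cap\ker(\pi\circ\phi).
\]
As $R$ was arbitrary, Lemma~\ref{lem:convergence criteria} in $\mathcal{G}_m$ gives the claimed convergence.

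The argument has no real obstacle. The only point that needs care is the identification of the markings of the subgroup marked groups with $\pi_k\circ\phi$ and $\pi\circ\phi$; these maps are surjective onto $\pi_k(H)$ and $\pi(H)$ precisely because $Y$ generates $H$. After that, the proof is just the Lipschitz bound $\abs{\phi(w)}_X\le L\abs{w}_Z$.
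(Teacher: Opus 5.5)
Your proof is correct and matches the paper's argument: it also uses the homomorphism $\iota\colon\F_m\to\F_n$, $z_i\mapsto y_i$, and the Lipschitz constant $C=\max_i\abs{y_i}_X$ to send $B_{(\F_m,Z)}(R)$ into $B(CR)$. It then pulls back the agreement of $\ker\pi_k$ and $\ker\pi$ on $B(CR)$ through $\rho_k=\pi_k\circ\iota$ and $\rho=\pi\circ\iota$, exactly as you do.
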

\begin{proof}
To avoid confusion with the fixed free marking X of $\F_n$, 
we denote by $Z = (z_1, \dots, z_m)$ a fixed ordered free generating set of $\F_m$.
Throughout the proof, we use the following notation:
\begin{itemize}
  \item $W_k = \pi_k(H)$ with ordered generating set $T_k = \pi_k(Y)$;
  \item $W = \pi(H)$ with ordered generating set $T = \pi(Y)$;
  \item $\rho_k : \F_m \twoheadrightarrow W_k$, the marking of $(W_k, T_k)$;
  \item $\rho : \F_m \twoheadrightarrow W$, the marking of $(W, T)$;
  \item a group homomorphism $\iota : \F_m \to \F_n$ defined by $z_i \mapsto y_i$.
\end{itemize}

We show that for any $R\in \N$, there exists $k_0 \in \mathbb{N}$ such that for all $k \ge k_0$,
\[
B_{(\F_m,Z)}(R) \cap \ker \rho_k=B_{(\F_m,Z)}(R) \cap \ker \rho.
\]

Fix $R\in \N$.
Set
\[
C = \max\{ |y_1|_X, \ldots, |y_m|_X \}.
\]
Then $|\iota(w)|_X \le CR$ for all $w \in B_{(F_m,Z)}(R)$.
Since $(G_k, S_k) \to (G, S)$, there exists $k_0 \in \mathbb{N}$ such that for all $k \ge k_0$,
\[
B_{(\F_n,X)}(CR) \cap \ker \pi_k=B_{(\F_n,X)}(CR) \cap \ker \pi .
\]

For $w \in \F_m$ with $|w|_Z \le R$, we have $\iota(w) \in \ker \pi_k$ if and only if $\iota(w) \in \ker \pi$ for all $k \ge k_0$.
Since $\rho_k = \pi_k \circ \iota$ and $\rho = \pi \circ \iota$, it follows that for all $k \ge k_0$,
\[
\begin{aligned}
B_{(\F_m,Z)}(R) \cap \ker \rho_k
&= B_{(\F_m,Z)}(R) \cap \iota^{-1}(\ker \pi_k) \\
&= B_{(\F_m,Z)}(R) \cap \iota^{-1}(\ker \pi) \\
&= B_{(\F_m,Z)}(R) \cap \ker \rho .
\end{aligned}
\]
The conclusion follows from Lemma \ref{lem:convergence criteria}.
\end{proof}

For any $n$-marked group $(G, S)\in{\G_n}$,
the \emph{growth function} $b_{(G, S)}(m)$ is defined as the number of elements of $G$ whose word length is at most $m$, namely, 
\begin{equation*}
b_{(G, S)}(m)=\#\Set{x\in{G}|\abs{x}_S\leq m}. 
\end{equation*}
Since $b_{(G, S)}(m)$ is submultiplicative,
Fekete's Lemma (see \cite[p.183]{Harpe2000}) implies that 
\begin{equation*}
\lim_{m\to \infty}\sqrt[m]{b_{(G, S)}(m)}=\inf_{m\geq 0}\sqrt[m]{b_{(G, S)}(m)}. 
\end{equation*}
We define the \textit{growth rate}  $\tau(G, S)$ by
\[
\tau(G, S)=\lim_{m\to \infty}\sqrt[m]{b_{(G, S)}(m)}.
\]
An $n$-marked group $(G, S)$ is said to have \emph{exponential growth} if $\tau(G, S)>1$.
It is known that the property having exponential growth does not depend on the choice of markings.

\smallskip
Let $(G, S)$ be a marked group.
The \emph{growth series $f_{(G, S)}(z)$} of $(G, S)$ is defined by
\begin{equation*}
f_{(G, S)}(z)=\sum_{m\geq 0} a_{(G, S)}(m) z^m,
\end{equation*}
where $a_{(G, S)}(m)$ is the number of elements of $G$ of word length $m$,
so that
\[
a_{(G, S)}(m)=b_{(G, S)}(m)-b_{(G, S)}(m-1) \ (m\geq 1), \ a_{(G, S)}(0)=1.
\]
If $G$ is finite,
then $f_{(G, S)}(z)$ is a polynomial and called the \textit{growth polynomial} of $(G, S)$.
If $G$ is infinite,
by the Cauchy-Hadamard theorem,
we see that the growth rate $\tau(G, S)$ equals the reciprocal of the radius of convergence of the growth series $f_{(G, S)}(z)$.

\begin{exa}\label{exa:growth series of cyclic groups}
Let $C_p$ be the cyclic group of order $p$,
and let $r$ be a cyclic generator of $C_p$.
For finite $p$,
the growth polynomial $f_{(C_p, \{r\})}(z)$ is given by 
\[
f_{(C_p, \{r\})}(z)=\begin{cases}
1+2z+\dots+2z^m & (p=2m+1)\\
1+2z+\dots+2z^{m-1}+z^m & (p=2m)
\end{cases}.
\]
For $p=\infty$,
the growth series $f_{(C_\infty, \{r\})}(z)$ is given by
\[
f_{(C_\infty, \{r\})}(z)=1+2z+2z^2+\dots=1+\sum_{m\geq 0}2z^m=\dfrac{1+z}{1-z}.
\]
\end{exa}
If a growth series $f_{(G, S)}(z)=\dfrac{P(z)}{Q(z)}$ where $P(z)$ and $Q(z)$ are relatively prime polynomials with integer coefficients,
then the reciprocal $\tau^{-1}(G, S)$ is a zero of $Q(z)$ which is smallest among the absolute values of zeros of $Q(z)$.

\begin{defi}
Let $(G, S)$ and $(H, T)$ be two marked groups with $S=(s_1,\dots,s_n)$ and $T=(t_1,\dots,t_m)$.
We define the direct product $(G, S)\times (H ,T)$ as an $(n+m)$-marked group by
\[
(G, S)\times (H ,T)=(G\times H, (s_1,\dots,s_n,t_1,\dots,t_m)),
\]
where we identify elements $s_i, t_j$ with $(s_i, 1_H), (1_G, t_j)$ for $1\leq i\leq n$ and $1\leq j\leq m$.
\end{defi}
It is easy to see that $f_{(G, S)\times (H, T)}(z)=f_{(G, S)}(z)f_{(H, T)}(z)$.
In particular,
\[
\tau((G, S)\times(H, T))=\max{\{\tau(G, S), \tau(H, T)\}}.
\]
This observation will be used in the proof of the continuity of the growth rate $\tau:\mathcal{D}_n\to \R_{\geq 1}$.

%%%%%%%%%%%%%%%%%%%%%%%%%%%%%%%%%%%%%%%%%%%
%%%%%%%%%%%%%%%%%%%%%%%%%%%%%%%%%%%%%%%%%%%
\section{The space of marked Coxeter systems and growth rates}\label{section:3}
In this section, we recall the necessary background on Coxeter systems and summarize known results concerning the topology of the space of $n$-marked Coxeter systems (see \cite{Yukita2024} for further details).

\smallskip
Let $\widehat{\N}=\N\cup{\{\infty\}}$ be the one-point compactification of $\N$.
It follows that a convergent sequence $\{m_k\}_{k\geq 1}$ of $\widehat{\N}$ is constant or strictly increasing.

\smallskip
Let $W$ be a group generated by a finite set $S=\{s_1,\dots,s_n\}$.
The pair $(W,S)$ is called a \emph{Coxeter system} if $W$ admits a presentation of the form
\[
W=\langle s_1,\dots,s_n \mid (s_is_j)^{m_{ij}}=1 \text{ for } 1\le i,j\le n\rangle,
\]
where $m_{ii}=1$ and $m_{ij}\in\widehat{\mathbb{N}}_{\ge 2}$.
In the case $m_{ij}=\infty$, the relation $(s_is_j)^{m_{ij}}=1$ is omitted.
%The \emph{rank} of a Coxeter system $(W,S)$ is defined to be the cardinality $\#S$ of $S$.
Such a generating set is called a \emph{Coxeter generating set}, and a group that admits a Coxeter generating set is called a \emph{Coxeter group}.
When the Coxeter generating set $S$ is equipped with a total order, the pair $(W,S)$ is called a \emph{marked Coxeter system}.

\smallskip
For a graph $\Lambda$, we denote its vertex set and edge set by $V(\Lambda)$ and $E(\Lambda)$, respectively.
A graph is said to be \emph{simple} if it contains neither multiple edges nor loops.
When $\Lambda$ is simple, an edge $e\in E(\Lambda)$ is written as $e=\{u,v\}$, where $u$ and $v$ are the endpoints of $e$.
A \emph{Coxeter graph} $(\Lambda,m)$ consists of a simple graph $\Lambda$ together with a map $m\colon E(\Lambda)\to\widehat{\mathbb{N}}_{\ge 3}$, called the \emph{edge weight}.
If the vertex set is endowed with a total order, the Coxeter graph is said to be \emph{marked}.
Since the weight $3$ occurs most frequently, it is omitted from all figures in this paper.
By abuse of notation, we write $\Lambda$ for a Coxeter graph $(\Lambda,m)$.

\smallskip
Marked Coxeter systems and marked Coxeter graphs correspond to each other as follows.
For an $n$-marked Coxeter system $(W,S)$ with $S=(s_1,\dots,s_n)$, we define the marked Coxeter graph $\Lambda(W,S)$ by setting $V(\Lambda)=(v_1,\dots,v_n)$, where two vertices $v_i$ and $v_j$ are joined by an edge of weight $m_{ij}$ if $m_{ij}\ge 3$.
Conversely, given a marked Coxeter graph $\Lambda=(\Lambda,m)$ with $V(\Lambda)=(v_1,\dots,v_n)$, the marked Coxeter system $(W(\Lambda),S)$ is defined by the presentation
\[
W(\Lambda)=\left\langle s_1,\dots,s_n \,\middle|\,
\begin{array}{l}
s_1^2=\cdots=s_n^2=1,\\
(s_is_j)^2=1 \quad \text{if } \{v_i,v_j\}\notin E(\Lambda),\\
(s_is_j)^{m(\{v_i,v_j\})}=1 \quad \text{if } \{v_i,v_j\}\in E(\Lambda)
\end{array}
\right\rangle.
\]
These two constructions are mutually inverse.
The marked Coxeter graph $\Lambda(W, S)$ (respectively, the marked Coxeter system $(W(\Lambda), S)$) obtained in this manner is called the \emph{marked Coxeter graph associated with} $(W, S)$ (resp. the \emph{marked Coxeter system associated with} $\Lambda$).

\smallskip
The set of all $n$-marked Coxeter systems is denoted by $\mathcal{C}_n$,
which forms a subspace of the space $\mathcal{G}_n$ of $n$-marked groups.
This space $\mathcal{C}_n$ is called the \emph{space of $n$-marked Coxeter systems}.

\begin{theo}[{\cite[Theorem 3.6]{Yukita2024}}]\label{theo:Yukita2024 Theorem 3.6}
The space $\mathcal{C}_n$ of $n$-marked Coxeter systems is compact.
Moreover, convergence in $\mathcal{C}_n$ can be characterized as follows.
Let $\{(W_k, S)\}_{k\ge 1}$ be a sequence of $n$-marked Coxeter systems, and let $\Lambda_k=(\Lambda_k, m_k)$ be the Coxeter graph associated with $(W_k, S)$.
Let $(W, S)$ be an $n$-marked Coxeter system, and let $\Lambda=(\Lambda, m)$ be its associated Coxeter graph.
Then $\{(W_k, S)\}_{k\ge 1}$ converges to $(W, S)$ if and only if $\Lambda_k=\Lambda$ as simple graphs for all sufficiently large $k$, and
$\displaystyle \lim_{k\to\infty} m_k(\{v_i,v_j\}) = m(\{v_i,v_j\})$ for every edge $\{v_i,v_j\}$.
\end{theo}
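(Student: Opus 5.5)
The statement to prove is Theorem~\ref{theo:Yukita2024 Theorem 3.6}: $\mathcal{C}_n$ is compact, and convergence is characterized by the Coxeter graphs stabilizing as simple graphs with convergent edge weights in $\widehat{\N}$. I will establish the characterization first and derive compactness from it, as follows.

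\textbf{Reduction to matrices.} A marked Coxeter system is determined by its Coxeter matrix $(m_{ij})\in\widehat{\N}_{\ge2}^{\binom n2}$, which is a compact space. If I show that $(m_{ij})\mapsto (W,S)$ is continuous into $\mathcal{G}_n$, then $\mathcal{C}_n$ is the continuous image of a compact space, hence compact (and closed, since $\mathcal{G}_n$ is Hausdorff). The injectivity of this map follows from the standard fact that $s_is_j$ has order exactly $m_{ij}$ in $W$. Together with continuity, this gives a continuous bijection from a compact space to a Hausdorff space, hence a homeomorphism onto $\mathcal{C}_n$. A homeomorphism transports convergence of matrices to convergence of systems and back, which is exactly the stated characterization. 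Note that entrywise convergence in $\widehat{\N}$ means each entry is eventually constant or tends to $\infty$. This matches ``$\Lambda_k=\Lambda$ as simple graphs eventually and weights converge,'' since the entry $2$ corresponds to a non-edge.

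\textbf{Continuity.} Suppose $m^{(k)}\to m$ entrywise. Fix $R$. I need $B(R)\cap\ker\pi_k=B(R)\cap\ker\pi$ for large $k$, and I check the two inclusions separately.

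For ``$\supseteq$'': every $w\in B(R)\cap\ker\pi$ is a product of finitely many conjugates of relators $x_i^2$ and $(x_ix_j)^{m_{ij}}$ with $m_{ij}<\infty$. There are only finitely many such $w$, and each uses only finitely many relators. Finite entries are eventually equal to their limits, so for large $k$ all of these relators are relators of $W_k$, and hence $w\in\ker\pi_k$.

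For ``$\subseteq$'': take $w\in B(R)\setminus\ker\pi$. I need $w\notin\ker\pi_k$ for large $k$. Let $N>R$ and let $m'$ be the matrix with $m'_{ij}=m_{ij}$ when $m_{ij}<\infty$ and $m'_{ij}=\infty$ otherwise. For large $k$, every entry of $m^{(k)}$ equals the corresponding entry of $m$ or exceeds $N$. The key claim is that for Coxeter matrices $m^{(k)}$ and $m$ agreeing up to truncation at $N>R$, the relations of length $\le R$ coincide. I would prove this using Tits' solution of the word problem: a word represents $1$ iff it reduces to the empty word by deleting $ss$ and applying braid moves. A braid move for the pair $\{i,j\}$ replaces an alternating word of length $m_{ij}$, so it never applies to a word of length $\le R<N$ when $m_{ij}>N$. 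Braid moves preserve length and deletions decrease it, so the whole reduction sequence of a word of length $\le R$ only ever sees words of length $\le R$. Hence the reduction process is identical for the matrices $m^{(k)}$ and $m$. The free-group words must first be written in $S\cup S^{-1}=S$ (using $s^{-1}=s$), which preserves length. Therefore $w\in\ker\pi_k\iff w\in\ker\pi$.

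\textbf{Main obstacle.} The step I expect to be delicate is this word-problem comparison, specifically the bookkeeping that the Tits reduction never leaves the ball of radius $R$. An alternative is to compare $W_k$ and $W$ with a common quotient via the deletion condition, but Tits' theorem is the cleanest tool.
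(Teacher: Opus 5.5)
\textbf{Verdict.} Your proof is correct. The paper does not prove this statement itself; it quotes it from \cite{Yukita2024}. The natural point of comparison is therefore the paper's proof that $\Phi\colon\mathcal{DM}_n\to\mathcal{D}_n$ is a homeomorphism, and that proof uses exactly your skeleton:
\begin{itemize}
\item a compact space of matrices;
\item a bijection onto the marked systems;
\item continuity;
\item the compact-to-Hausdorff argument.
\end{itemize}

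\textbf{Where the routes differ.} The real difference is in the continuity step. In the Dyer case the paper avoids any word problem. It realizes $D_M$ inside the Coxeter group of the induced graph (Theorem \ref{theo:Soergel2024 Theorem2.8}), applies the present theorem to the ambient Coxeter systems, and pulls convergence back to the subgroups with Lemma \ref{lem:convergence of subgroups}. That reduction is short, but it presupposes exactly the Coxeter case you are proving.

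You supply this base case from scratch via Tits' solution to the word problem. Your key observation is that reducing a word of length at most $R$ to the empty word passes only through words of length at most $R$. Such a reduction therefore uses only braid relations with $m_{ij}\le R$. For large $k$ these relations are the same for $m^{(k)}$ and $m$, because finite entries of $m$ are eventually attained, and entries with $m_{ij}=\infty$ eventually have $m^{(k)}_{ij}>R$.

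Your argument is the Coxeter counterpart of the Paris--Soergel word-problem machinery, which the paper uses only later, for monotonicity. You also justify injectivity explicitly through the exact order of $s_is_j$, whereas the paper simply asserts bijectivity in the Dyer setting.

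\textbf{Minor point.} The matrix $m'$ you introduce is identical to $m$ and plays no role. The Tits direction needs only the one-sided condition that, for large $k$, $m^{(k)}_{ij}\le R$ forces $m^{(k)}_{ij}=m_{ij}$. The other inclusion is already covered by your relator argument.
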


A Coxeter system $(W, S)$ is said to be \emph{irreducible} if the associated Coxeter graph $\Lambda$ is connected.
For a subset $T\subset S$, the subgroup $W_T\subset W$ generated by $T$ is called a \emph{Coxeter subgroup}.
It is well known that the pair $(W_T, T)$ is again a Coxeter system, and that the full
subgraph $\Lambda_T\subset \Lambda$ induced by $T\subset S$, equipped with the restricted weight function, is precisely the Coxeter graph associated with $(W_T, T)$.
Note that the full subgraph $\Lambda_T$ is a subgraph whose vertex set is $T$, in which two vertices are joined by an edge if and only if they are joined by an edge in $\Lambda$.
Consequently, if the Coxeter graph $\Lambda$ decomposes as a disjoint union
\[
\Lambda=\Lambda_1\sqcup\cdots\sqcup\Lambda_N,
\]
then the Coxeter group $W$ decomposes as a direct product
\[
W=W_1\times\cdots\times W_N,
\]
where each $W_i$ is the Coxeter group defined by $\Lambda_i$.

\smallskip
Let $\mathbb{S}^d$ and $\mathbb{E}^d$ denote the $d$-dimensional spherical and Euclidean geometries, respectively.
A Coxeter system $(W, S)$ is called \emph{spherical} (respectively, \emph{Euclidean}) if $W$ (respectively, if $W$ is infinite and) acts properly discontinuously by isometries
on $\mathbb{S}^d$ (respectively, on $\mathbb{E}^d$), with the elements of $S$ acting as reflections.
Irreducible spherical and Euclidean Coxeter systems are completely classified in terms
of Coxeter graphs, which are listed in Figures \ref{fig:irreducible sphericals} and \ref{fig:irreducible Euclidean}.
\begin{figure}[htbp]
\centering
\includegraphics[scale=0.5]{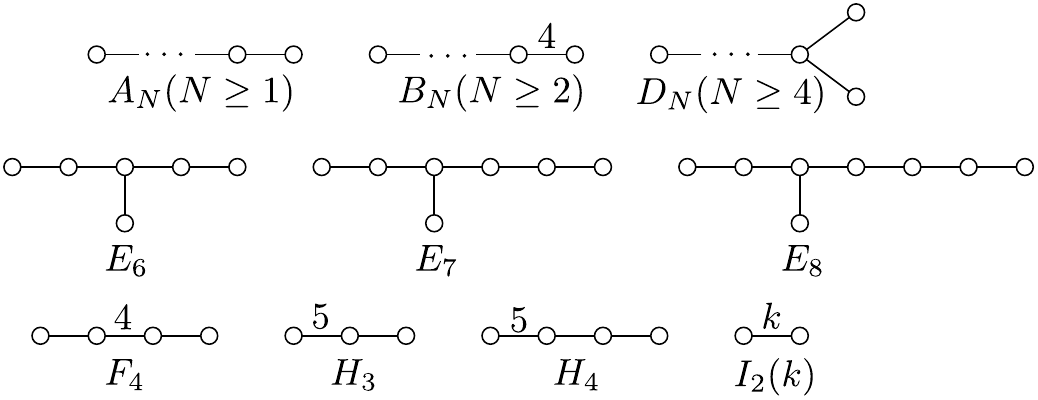}
\caption{The Coxeter graphs of irreducible spherical Coxeter systems with $N$ vertices}
\label{fig:irreducible sphericals}
\end{figure}
\begin{figure}[htbp]
\centering
\includegraphics[scale=0.5]{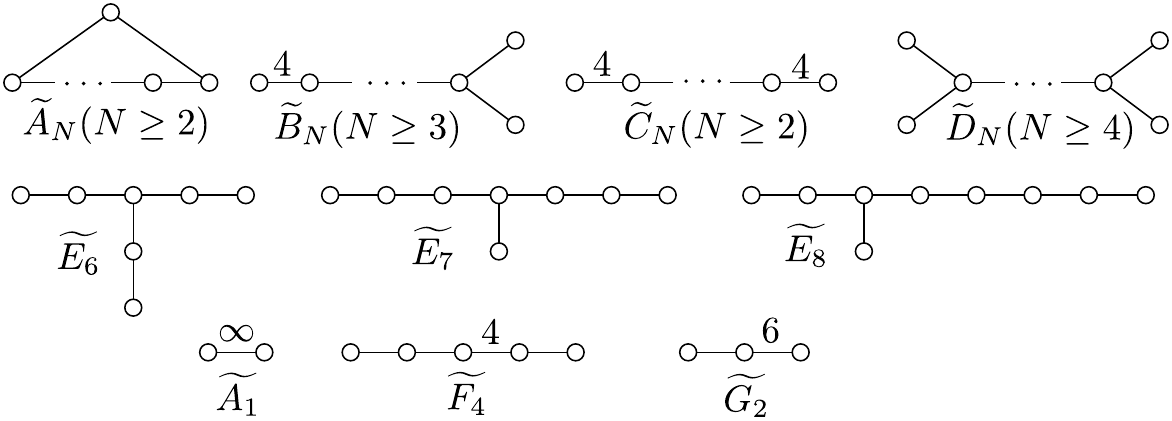}
\caption{The Coxeter graphs of irreducible Euclidean Coxeter systems with $N+1$ vertices}
\label{fig:irreducible Euclidean}
\end{figure}

\begin{theo}[{\cite[Lemma 3.8 and Corollary 3.12]{Yukita2024}}]\label{theo:clopenness of spherical and Euclidean}
The subspace of spherical or Euclidean $n$-marked Coxeter systems is both open and
closed in $\mathcal{C}_n$.
\end{theo}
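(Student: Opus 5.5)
The plan is to reduce both assertions to the combinatorial description of convergence in Theorem \ref{theo:Yukita2024 Theorem 3.6} and then inspect the classification in Figures \ref{fig:irreducible sphericals} and \ref{fig:irreducible Euclidean}. I will use the standard fact that a Coxeter system is spherical or Euclidean exactly when every connected component of its Coxeter graph appears in one of the two lists, that is, it is a direct product of irreducible spherical and irreducible Euclidean factors. If the paper's definition of ``Euclidean'' is meant more strictly, I will first check that this product description is what the definition amounts to.

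The key observation to extract from the figures is the following. Every irreducible spherical or Euclidean Coxeter graph with at least three vertices has all edge weights in $\{3,4,5,6\}$. Edge weights $\ge 7$, including $\infty$, occur only in components with exactly two vertices. These are $I_2(m)$ for finite $m$ (spherical) and $\widetilde{A}_1$ for $m=\infty$ (Euclidean). In particular, a two-vertex graph with weight $m\in\widehat{\N}_{\ge 3}$ is always spherical or Euclidean, whatever the value of $m$.

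For closedness, take $(W_k,S)\to(W,S)$ with each $(W_k,S)$ spherical or Euclidean. By Theorem \ref{theo:Yukita2024 Theorem 3.6}, for large $k$ all $\Lambda_k$ equal $\Lambda$ as simple graphs, and each weight converges in $\widehat{\N}$. Such a sequence is eventually constant or strictly increasing to $\infty$. Fix a component $\Lambda_i$ of $\Lambda$; for large $k$ it is also a component of $\Lambda_k$.
\begin{itemize}
\item If $\Lambda_i$ has at least three vertices, its weights in $\Lambda_k$ lie in $\{3,\dots,6\}$. They therefore cannot tend to $\infty$, so they are eventually constant, and $\Lambda_i$ equals the corresponding component of $\Lambda_k$, which is spherical or Euclidean.
\item If $\Lambda_i$ has two vertices, it is spherical or Euclidean by the observation above.
\item If $\Lambda_i$ has one vertex, it is $A_1$.
\end{itemize}
Hence $(W,S)$ is spherical or Euclidean.

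For openness, I argue by sequences, which suffices because $\mathcal{C}_n$ is metric. Suppose $(W,S)$ is spherical or Euclidean and $(W_k,S)\to(W,S)$. Again, for large $k$ we have $\Lambda_k=\Lambda$ as simple graphs. Every finite weight of $\Lambda$ is eventually attained exactly by $m_k$, since a sequence converging to a finite value of $\widehat{\N}$ is eventually constant. By the observation, an infinite weight of $\Lambda$ can only sit in a $\widetilde{A}_1$ component. The corresponding component of $\Lambda_k$ is then a two-vertex graph, hence $I_2(m_k)$ or $\widetilde{A}_1$, so it is spherical or Euclidean. All other components of $\Lambda_k$ coincide with those of $\Lambda$. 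So $(W_k,S)$ is spherical or Euclidean for all large $k$, which gives openness.

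The only real obstacle is bookkeeping. One must verify from the two figures the bound that weights are at most $6$ on components with at least three vertices; the extreme cases are $\widetilde{G}_2$ with weight $6$ and $H_3,H_4$ with weight $5$. One must also make sure the definition of spherical or Euclidean is indeed the componentwise one, so that mixed products of spherical and Euclidean factors are handled uniformly. Everything else follows directly from Theorem \ref{theo:Yukita2024 Theorem 3.6}.
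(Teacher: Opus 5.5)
Your proof is correct. The paper gives no proof of this statement: it is quoted from \cite{Yukita2024} and used as a black box, for instance in Lemma~\ref{lem:subspace of spherical or Euclidean Dyer systems}. So there is no in-paper argument to match. What you supply is a self-contained derivation that needs only the forward implication of Theorem~\ref{theo:Yukita2024 Theorem 3.6} and the lists in Figures~\ref{fig:irreducible sphericals} and~\ref{fig:irreducible Euclidean}.

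Your one combinatorial input is accurate. On irreducible spherical or Euclidean graphs with at least three vertices, all weights lie in $\{3,4,5,6\}$; the value $6$ occurs only in $\widetilde{G}_2$ and $5$ only in $H_3,H_4$. Every connected two-vertex graph is $I_2(m)$ or $\widetilde{A}_1$.

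This input drives both halves. For closedness, weights on large components are bounded, hence eventually constant, so those components of the limit already occur in the sequence. For openness, the only weights near a spherical or Euclidean limit that need not be eventually constant are those equal to $\infty$. These sit on $\widetilde{A}_1$ components, where any value of the weight is harmless. Arguing with sequences is legitimate because $\mathcal{C}_n\subset\mathcal{G}_n$ is a metric space.

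Your caveat about the definition resolves in your favour. The paper defines ``Euclidean'' as infinite $W$ acting properly discontinuously on $\mathbb{E}^d$ with $S$ acting by reflections, which admits mixed products of spherical and affine factors. The componentwise description is also what the paper implicitly relies on elsewhere, e.g.\ when it invokes de la Harpe in Lemma~\ref{lem:growth of spherical or Euclidean Dyer system}.
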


%%%%%%%%%%%%%%%%%%%%%%%%%%%%%%%%%%%%%%%%%%%
%%%%%%%%%%%%%%%%%%%%%%%%%%%%%%%%%%%%%%%%%%%
\section{The space of marked Dyer systems}\label{section:4}
In this section,
we give the definitions of Dyer graphs, Dyer systems, Dyer matrices, and the space $\mathcal{D}_n$ of Dyer systems of rank $n$.
Then we shall see that $\mathcal{D}_n$ is a compact subspace of $\G_n$. 
The general reference here is \cite{ParisVarghese2024,Soergel2024}.

\smallskip
Let $D$ be a group generated by a finite set $S=\{s_1,\dots,s_n\}$.
Set $[a, b]_k=\underbrace{aba\dots}_k$ for $a,b\in D$ and $k\in \N_{\geq 2}$.
For example,
the relation $[a, b]_2=[b, a]_2$ is the commuting relation $ab=ba$ between $a$ and $b$,
and the relation $[a, b]_k=[b, a]_k$ between involutions $a$ and $b$ is rewritten as $(ab)^k=1$.
The pair $(D, S)$ is called a \emph{Dyer system} if $D$ admits a presentation of the form
\[
D=\left\langle s_1,\dots,s_n \,\middle|\,
s_i^{f_i}=1, \ \left[s_i, s_j\right]_{m_{ij}}=\left[s_j, s_i\right]_{m_{ij}}\text{ for }1\leq i\leq j\leq n
\right\rangle,
\]
where $f_i, m_{ij}\in\widehat{\mathbb{N}}_{\ge 2}$ for $1\leq i,j\leq n$ such that $m_{ij}=2$ whenever $f_i\geq 3$ or $f_j\geq 3$.
In the case $f_i=\infty$ and $m_{ij}=\infty$, the relations $s_i^{f_i}=1$ and $\left[s_i, s_j\right]_{m_{ij}}=\left[s_j, s_i\right]_{m_{ij}}$ are omitted, respectively.
Such a generating set is called a \emph{Dyer generating set}, and a group that admits a finite Dyer generating set is called a \emph{Dyer group}.
When the Dyer generating set $S$ is equipped with a total order, the pair $(D, S)$ is called a \emph{marked Dyer system}.
It follows that $(D, S)$ is a Coxeter system if $f_i=2$ for $1\leq i\leq n$.

\smallskip
Next,
we introduce Dyer graphs, which are useful to represent Dyer systems.
\emph{Note that the definition of Dyer graphs in this paper is not the same as in \cite{ParisVarghese2024, Soergel2024}.}
The definition is modified to fit our objectives.
A \emph{Dyer graph} $(\Gamma, f, m)$ is the triple consisting of a simple finite graph $\Gamma$,
a map $f:V(\Gamma)\to \widehat{\N}_{\geq 2}$, called the \emph{vertex weight},
a map $m:E(\Gamma)\to \widehat{\N}_{\geq 3}$, called the \emph{edge weight},
such that $m(e)=\infty$ for any edge $e=\{u, v\}\in E(\Gamma)$ with $f(u)\geq 3$ or $f(v)\geq 3$(see Figure \ref{fig:example of Dyer graph}).
If $f(v)=2$ for any vertex $v$, we may consider the Dyer graph $(\Gamma, f, m)$ as the Coxeter graph $(\Gamma, m)$.
If the vertex set is endowed with a total order, the Dyer graph is said to be \emph{marked}.
Since the edge weight $3$ occurs most frequently, it is omitted from all figures in this paper.
By abuse of notation, we write $\Gamma$ for a Dyer graph $(\Gamma, f, m)$.

\smallskip
Marked Dyer systems and marked Dyer graphs correspond to each other as follows.
For an $n$-marked Dyer system $(D,S)$ with $S=(s_1,\dots,s_n)$, we define the marked Dyer graph $\Gamma(D,S)$ by $V(\Gamma)=(v_1,\dots,v_n)$, where the vertex weight of $v_i$ is $f_i$ and two vertices $v_i$ and $v_j$ are joined by an edge of weight $m_{ij}$ if $m_{ij}\ge 3$.
Conversely, given a marked Dyer graph $\Gamma=(\Gamma, f, m)$ with $V(\Gamma)=(v_1,\dots,v_n)$, the marked Dyer system $(D(\Gamma),S)$ is defined by the presentation
\[
D(\Gamma)=\left\langle s_1,\dots,s_n \,\middle|\,
\begin{array}{l}
s_1^{f(v_1)}=\cdots=s_n^{f(v_n)}=1,\\[0.2em]
\left[s_i, s_j\right]_2=\left[s_j, s_i\right]_2\text{ if } \{v_i,v_j\}\notin E(\Gamma),\\[0.2em]
\left[s_i, s_j\right]_{m(\{v_i, v_j\})}=\left[s_j, s_i\right]_{m(\{v_i, v_j\})}\text{ if } \{v_i,v_j\}\in E(\Gamma)
\end{array}
\right\rangle,
\]
where the relations $s_i^{f(v_i)}=1$ and $\left[s_i, s_j\right]_{m(\{v_i, v_j\})}=\left[s_j, s_i\right]_{m(\{v_i, v_j\})}$ are omitted if $f(v)=\infty$ and $m(\{v_i, v_j\})=\infty$, respectively.
These two constructions are mutually inverse (see Example \ref{exa:example of Dyer graph and group}).
The marked Dyer graph $\Gamma(D, S)$ (respectively, the marked Dyer system $(D(\Gamma), S)$) obtained in this manner is called the \emph{marked Dyer graph associated with} $(D, S)$ (resp. the \emph{marked Dyer system associated with} $\Gamma$).
It follows that for a Dyer graph $(\Gamma, f, m)$,
\begin{itemize}
\item[(i)]
if $f(v)=2$ for any  $v\in V$,
then the associated Dyer system $(D(\Gamma), S)$ is a Coxeter system,
\item[(ii)]
if $m(e)=\infty$ for any $e\in E$,
then the associated Dyer group $D(\Gamma)$ is a graph product of cyclic groups
\item[(iii)]
if $f(v)=\infty$ for any $v\in V$,
then the associated Dyer group $D(\Gamma)$ is a right-angled Artin group.
\end{itemize}

\begin{exa}\label{exa:example of Dyer graph and group}
Consider a marked Dyer graph $\Gamma$ as in Figure \ref{fig:example of Dyer graph},
where the labels on the vertices and edges are the vertex and edge weights,
respectively.
Note that the edges without labels have weight $3$.
The associated marked Dyer system $D(\Gamma)$ is defined by the following presentation.
\[
D(\Gamma)=\left\langle s_1, s_2, s_3, s_4\,\middle|\,
\begin{array}{l}
s_2^2=s_3^2=s_4^k=1\\[0.2em]
s_1s_3=s_3s_1, \ s_1s_4=s_4s_1, \ s_2s_4=s_4s_2\\[0.2em]
[s_2, s_3]_p=[s_3, s_2]_p
\end{array}
\right\rangle.
\]
\begin{figure}[htbp]
\centering
\includegraphics[scale=0.5]{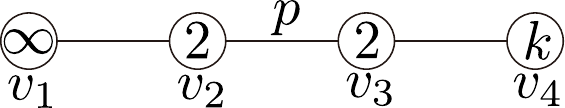}
\caption{Example of Dyer graph $\Gamma$}
\label{fig:example of Dyer graph}
\end{figure}
\end{exa}

Soergel exhibited that any Dyer group $D$ is a finite index subgroup of a Coxeter group $W$ by giving the associated Coxeter graph $\Lambda=(\Lambda, \mu)$ from the associated Dyer graph $(\Gamma, f, m)$ as follows (see \cite{Soergel2024} for details).
Set $V=V(\Gamma), V_2=\Set{v\in V|f(v)=2}, V_p=\Set{v\in V|3\leq f(v)<\infty}$, and $V_\infty=\Set{v\in V|f(v)=\infty}$.
Take copies of $V_p$ and $V_\infty$ denoted by $V'_p=\Set{v'|v\in V_p}$ and $V'_\infty=\Set{v'|v\in V_\infty}$,
respectively.
Note that we distinguish $V'_p$(resp. $V'_\infty)$ from $V_p\subset V$(resp. $V_\infty \subset V$).
The vertex set $V(\Lambda)$ is the disjoint union $V\sqcup V'_p\sqcup V'_\infty$.
We define the edges of $\Lambda$ by the following rules.
\begin{itemize}
\item[(i)]
Two vertices $v, w\in V\subset V(\Lambda)$ with $\{v, w\}\in E(\Gamma)$ are joined by an edge with the edge weight $\mu(\{v, w\})=m(\{v, w\})$.
\item[(ii)]
A vertex $v\in V_p'\sqcup V'_\infty\subset V(\Lambda)$ is only joined to $v'\in V'_p\sqcup V'_\infty$ by an edge with the edge weight $\mu(\{v, v'\})=f(v)$.
\end{itemize}
The Coxeter graph $\Lambda$ is said to be \emph{induced by $\Gamma$}(see Figure \ref{fig:induced Coxeter graph}).
\begin{figure}[htbp]
\centering
\includegraphics[scale=0.5]{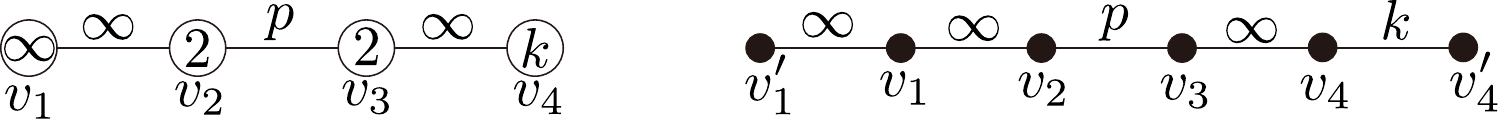}
\caption{Example of induced Coxeter graph: the left-hand side is a Dyer graph $\Gamma$, and the right-hand side is the induced Coxeter graph $\Lambda$}
\label{fig:induced Coxeter graph}
\end{figure}

\begin{theo}[{\cite[Theorem 2.8]{Soergel2024}}]\label{theo:Soergel2024 Theorem2.8}
Take a Dyer graph $\Gamma=(\Gamma, f, m)$ and the induced Coxeter graph $\Lambda$. 
We define a map $\phi:V(\Gamma)\to V(\Lambda)$ by 
\[
\phi(v)=\begin{cases} v & (v\in V_2) \\ vv' & (v\in V_p\sqcup V_\infty) \end{cases}
\]
and set $\Delta=\left\langle \phi(V(\Gamma)) \right\rangle<W(\Lambda)$.
Then $\phi$ extends to the group isomorphism $\phi:D(\Gamma)\to \Delta$, and the index of $\Delta$ is $2(\#V_p+\#V_\infty)$.
\end{theo}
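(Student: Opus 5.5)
The plan is to build the homomorphism $\phi$ first, then show it is injective and that its image $\Delta$ has finite index. Write $W=W(\Lambda)$, let $s_v$ denote the generator of $W$ for a vertex $v\in V(\Lambda)$, and let $s_i$ denote the generators of $D(\Gamma)$.

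\textbf{Step 1: $\phi$ is a well-defined homomorphism.} I will check that the elements $\phi(s_i)$ satisfy every defining relation of $D(\Gamma)$.
\begin{itemize}
\item For $v\in V_p\sqcup V_\infty$, the product $s_vs_{v'}$ has order exactly $\mu(\{v,v'\})=f(v)$ in $W$. So $\phi(v)^{f(v)}=1$ holds, and no relation is imposed when $f(v)=\infty$.
\item For an edge $\{v,w\}\in E(\Gamma)$ with finite weight, both endpoints lie in $V_2$, because edges at a vertex of weight $\ge 3$ have weight $\infty$. The braid relation is then the Coxeter relation $(s_vs_w)^{m}=1$, which holds in $W$ by rule (i).
\item For $\{v,w\}\notin E(\Gamma)$, the vertices $v,w$ are not adjacent in $\Lambda$. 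The primed vertices are adjacent only to their own partners. Hence each of $s_v,s_{v'}$ commutes with each of $s_w,s_{w'}$, and so $\phi(v)\phi(w)=\phi(w)\phi(v)$.
\end{itemize}
This gives an epimorphism $D(\Gamma)\twoheadrightarrow\Delta$.

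\textbf{Step 2: injectivity.} This is the main obstacle. My first approach is to compare normal forms. By the Paris--Soergel theory, every element of $D(\Gamma)$ has a reduced syllabic word. Each syllable $s_v^{a}$ with $v\in V_p\sqcup V_\infty$ maps to the alternating word $(s_vs_{v'})^{a}$ or its inverse. I would show that the image of a reduced syllabic word is a reduced word in $W$, using Tits' solution to the word problem. The key point is that no braid move or cancellation can merge distinct syllables, since every $s_v$ with $v\notin V_2$ has only weight-$\infty$ or commuting relations with the vertices of $V$. Then a nontrivial reduced syllabic word maps to a nontrivial element, so $\phi$ is injective.

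If this comparison becomes unwieldy, the fallback is Reidemeister--Schreier. I would apply it to the subgroup of $W$ generated by $\phi(V(\Gamma))$, using an explicit Schreier transversal built from products of the $s_v$ with $v\in V_p\sqcup V_\infty$. The aim is to recognize the resulting presentation, after Tietze moves, as the Dyer presentation.

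\textbf{Step 3: the index.} I would observe that $W$ is generated by $\Delta$ together with the involutions $s_v$ for $v\in V_p\sqcup V_\infty$, since $s_{v'}=s_v\cdot\phi(v)$. I would then count cosets, for example through the action of $W$ on $W/\Delta$ or by a covering argument on the Davis complex. For this I would use the homomorphism $W\to(\Z/2)^{\#V_p+\#V_\infty}$ that sends $s_v,s_{v'}$ to the basis vector $e_v$ and sends the vertices of $V_2$ to $0$. This map is well defined because every finite-weight edge of $\Lambda$ joins two vertices with the same image. Its kernel contains $\Delta$ and gives a lower bound on the index.

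The delicate point is matching this lower bound with the number $2(\#V_p+\#V_\infty)$ in the statement. That requires pinning down the cosets precisely, and I would settle it by a careful orbit computation in small examples before attempting the general count.
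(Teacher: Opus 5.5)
Step 1 is fine. Step 3, however, cannot be completed, because the number you are trying to match is wrong, and your own homomorphism already shows this. Put $k=\#V_p+\#V_\infty$. Your map $W\to(\mathbb{Z}/2)^k$ is well defined, surjective and kills $\Delta$, so $[W:\Delta]\ge 2^k$. This exceeds $2k$ once $k\ge 3$, and for $k=0$ the stated formula gives $0$ although $\Delta=W$. Concretely, three isolated vertices of weight $\infty$ give $W=D_\infty^3$ and $\Delta\cong\mathbb{Z}^3$, of index $8$, not $6$.

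The correct index is $2^{\#V_p+\#V_\infty}$, which agrees with $2(\#V_p+\#V_\infty)$ only for $k\in\{1,2\}$, as in the paper's index-$4$ example. The paper only quotes this result and uses only the isomorphism, so nothing later is affected.

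What you are missing is the matching upper bound, and it needs no orbit computations. Take $v\in V_p\sqcup V_\infty$. The involution $s_{v'}$ commutes with every generator $g$ of $\Delta$ other than $s_vs_{v'}$. Writing $s_v=(s_vs_{v'})s_{v'}$ therefore gives $s_vgs_v=(s_vs_{v'})g(s_vs_{v'})^{-1}\in\Delta$, while $s_v(s_vs_{v'})s_v=(s_vs_{v'})^{-1}$. Since $W$ is generated by $\Delta$ and these $s_v$ (as you observed), $\Delta\trianglelefteq W$. In $W/\Delta$ one has $\bar s_u=1$ for $u\in V_2$ and $\bar s_{v'}=\bar s_v$. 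For $v\neq w$, $\bar s_v\bar s_w=\bar s_v\bar s_{w'}=\bar s_{w'}\bar s_v=\bar s_w\bar s_v$. So $W/\Delta$ is elementary abelian of rank at most $k$, and your surjection forces $W/\Delta\cong(\mathbb{Z}/2)^k$.

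Step 2 is where the real content lies, and there the key point is asserted rather than proved. Tits' solution allows commutations that move the letters $s_v,s_{v'}$ across other letters. For odd $f(v)$ it also allows braid moves $[s_v,s_{v'}]_{f(v)}\to[s_{v'},s_v]_{f(v)}$ that ignore syllable boundaries. So the claim that no move can merge distinct syllables needs an actual argument. Your Reidemeister--Schreier fallback needs a transversal, which is exactly the identification $\Delta=\ker(W\to(\mathbb{Z}/2)^k)$ above.

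A route that gives injectivity and the index at once is to double one vertex at a time.
\begin{itemize}
\item For $v\in V_p\sqcup V_\infty$, let $\tau_v$ be the involutive automorphism of $D(\Gamma)$ that inverts $s_v$ and fixes the other generators. It is well defined because $s_v$ has only commutation relations, or none, with them.
\item Let $\Gamma_v$ be the Dyer graph obtained by replacing $v$ with two weight-$2$ vertices $v,v'$ joined by an edge of weight $f(v)$. The new $v$ keeps the old edges, all of weight $\infty$, and $v'$ has no other edge.
\item The assignment $s_v\mapsto(s_v,\tau_v)$, $s_{v'}\mapsto(1,\tau_v)$, $s_x\mapsto(s_x,1)$ defines an isomorphism $D(\Gamma_v)\to D(\Gamma)\rtimes\langle\tau_v\rangle$. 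Its inverse is $(d,\varepsilon)\mapsto\phi_v(d)s_{v'}^{\varepsilon}$, where $\phi_v(s_v)=s_vs_{v'}$ and $\phi_v$ fixes the other generators.
\item Hence $\phi_v$ embeds $D(\Gamma)$ as a subgroup of index $2$.
\end{itemize}
After $k$ such steps you reach $W(\Lambda)$, the composite is $\phi$, and $[W:\Delta]=2^k$.
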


\begin{exa}
In Figure \ref{fig:induced Coxeter graph},
the subgroup $\Delta=\langle v_1v'_1, v_2, v_3, v_4v'_4 \rangle$ is an index $4$ subgroup of the Coxeter group $W(\Lambda)$ associated with $\Lambda$ and is isomorphic to the Dyer group $D(\Gamma)$ associated with $\Gamma$.
\end{exa}

\begin{defi}
A symmetric $n\times n$ matrix $M=(m_{ij})\in M_n(\widehat{\N}_{\geq 2})$ is called a Dyer matrix if $m_{ii}\geq 2$ and $m_{ij}=2$ or $\infty$ for any $j$ if $m_{ii}\geq 3$.
The latter condition reflects the fact that generators of order $\geq 3$ commute only trivially.
The set of all $n\times n$ Dyer matrices is denoted by $\mathcal{DM}_n$,
which is compact as a subspace of the product topological space $(\widehat{\N}_{\geq 2})^{n^2}$.
\end{defi}

For a Dyer matrix $M\in \mathcal{DM}_n$,
we define the \emph{marked Dyer graph $\Gamma_M$ associated with $M$} as follows:
the vertex set is an ordered $n$-element set, say $V=(v_1,\dots,v_n)$, whose vertex weight is defined by $f(v_i)=m_{ii}$.
Two vertices $v_i, v_j$ are joined by an edge of weight $m_{ij}$ if $m_{ij}\geq 3$.
The marked Dyer system associated with $\Gamma_M$ is denoted by $(D_M, S)$.
Conversely, the Dyer matrix $D_\Gamma$ associated with a Dyer graph $\Gamma$ is obtained as follows:
set $V(\Gamma)=(v_1,\dots,v_n)$.
Then $D_\Gamma$ is an $n\times n$ symmetric matrix whose the $(i, j)$-th entry is defined to be $f(v_i)$ for $i=j$, $m(\{v_i, v_j\})$ for $i\neq j$ and $\{v_i, v_j\}\in E(\Gamma)$, and $2$ for $i\neq j$ and $\{v_i, v_j\}\not\in E(\Gamma)$.
It follows that the map $\Phi:\mathcal{DM}_n\ni M\mapsto (D_M, S)\in \mathcal{D}_n$ is a bijection.

\begin{theo}
The map $\Phi$ is a homeomorphism.
In particular,
$\mathcal{D}_n$ is compact.
\end{theo}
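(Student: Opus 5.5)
Since $\mathcal{DM}_n$ is compact and $\mathcal{G}_n$ is Hausdorff (it is a metric space), it suffices to prove that $\Phi$ is continuous. A continuous bijection from a compact space onto a Hausdorff space is a homeomorphism, and compactness of $\mathcal{D}_n$ then follows from compactness of $\mathcal{DM}_n$. Both spaces are metrizable, so I will check sequential continuity. Let $M_k\to M$ in $\mathcal{DM}_n$. Each entry of $M_k$ converges in $\widehat{\N}_{\geq 2}$, so each entry is eventually constant or tends to $\infty$. By compactness of $\mathcal{G}_n$, it is enough to show that every convergent subsequence of $(D_{M_k},S)$ has limit $(D_M,S)$. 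After passing to a subsequence, I may also assume that the set of diagonal entries with finite value at least $3$ stabilizes. Consequently the sets $V_2$, $V_p$, $V_\infty$ of the limit graph, and of the graphs $\Gamma_{M_k}$ for large $k$, behave coherently.

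The main idea is to transfer the problem to Coxeter systems via Theorem \ref{theo:Soergel2024 Theorem2.8} and then use Lemma \ref{lem:convergence of subgroups}. The difficulty is that the induced Coxeter graph depends on which vertices have weight $2$. If $f_k(v_i)$ is finite and at least $3$ for all $k$, while $f(v_i)=\infty$, this is harmless: the vertex lies in $V_p$ at stage $k$ and in $V_\infty$ in the limit, and both contribute a doubled vertex $v_i'$ with edge weight $f$. A vertex with $f_k(v_i)=2$ eventually is constant, hence stays in $V_2$. To handle the remaining case uniformly, I will use a fixed ambient Coxeter rank $N=n+\#(V_p\sqcup V_\infty)$, computed for the limit, which equals the rank for large $k$ by the stabilization above.

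Concretely, let $\Lambda_k$ and $\Lambda$ be the induced marked Coxeter graphs, all on the same ordered vertex set $V\sqcup V'_p\sqcup V'_\infty$. Their edge weights are entries of $M_k$ (respectively $M$) placed in fixed positions, with value $2$ meaning no edge. Hence the weights converge entrywise, and by Theorem \ref{theo:Yukita2024 Theorem 3.6} we get $(W(\Lambda_k),\cdot)\to(W(\Lambda),\cdot)$ in $\mathcal{C}_N\subset\mathcal{G}_N$. Now take $H<\F_N$ generated by the words $Y=(y_1,\dots,y_n)$, where $y_i=x_i$ for $v_i\in V_2$ and $y_i=x_ix_{i'}$ otherwise. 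By Theorem \ref{theo:Soergel2024 Theorem2.8}, $(\pi_k(H),\pi_k(Y))$ is isomorphic as a marked group to $(D_{M_k},S)$, and $(\pi(H),\pi(Y))$ is isomorphic to $(D_M,S)$. Lemma \ref{lem:convergence of subgroups} then gives $(D_{M_k},S)\to(D_M,S)$ in $\mathcal{G}_n$.

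The main obstacle is justifying that the vertex-type partition is eventually constant, together with the edge-compatibility condition of Dyer matrices along the sequence. When $f(v_i)\geq 3$, the Dyer condition forces the off-diagonal entries in row $i$ to be $2$ or $\infty$. Their limits then lie in $\{2,\infty\}$, and since entries converge in $\widehat{\N}$ they are eventually constant, so condition (i) in the construction of $\Lambda_k$ matches that of $\Lambda$. I would verify carefully that a sequence taking values in $\{2\}\cup\N_{\geq 3}$ and converging to a value $\geq 3$ eventually avoids $2$. This holds because convergence in $\widehat{\N}_{\geq 2}$ means the sequence is eventually constant or tends to $\infty$. This is what makes $V_2$ stable, and after that the Coxeter convergence argument goes through unchanged.
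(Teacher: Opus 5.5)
Your proposal is correct and follows essentially the same route as the paper. You reduce to continuity of $\Phi$ via the compact-to-Hausdorff argument, then check that the vertex-type partition and the underlying graphs stabilize, so that the induced Coxeter graphs live on one fixed vertex set with converging weights; the conclusion then follows from Yukita's convergence criterion for marked Coxeter systems, Soergel's embedding of the Dyer group as the subgroup generated by $\phi(V(\Gamma))$, and the lemma on convergence of marked subgroups (and your explicit handling of the subsequence step is, if anything, slightly more careful than the paper's).
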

\begin{proof}
Let $\{M_k\}_{k\geq 1}$ be a convergent sequence of $n\times n$ Dyer matrices and let $\displaystyle \lim_{k\to \infty}M_k=M$.
We write $\Gamma_k$ and $\Gamma$ (resp. $(D_k, S)$ and $(D, S)$) for the marked Dyer graphs (resp. systems) associated with $M_k$ and $M$.

\smallskip
The idea of the proof is as follows.
Consider the marked Coxeter graphs $\Lambda_k, \Lambda$ induced by $\Gamma_k ,\Gamma$, and the marked Coxeter systems $(W_k, T)$ and $(W, T)$ associated with $\Lambda_k, \Lambda$,
respectively.
Since $(W_k, T)\to (W, T)$ as $k\to \infty$ by Theorem \ref{theo:Yukita2024 Theorem 3.6},
we see that the marked subgroup $\Delta_k$ of $W_k$ obtained as in Theorem \ref{theo:Soergel2024 Theorem2.8} converges to the subgroup $\Delta$ of $W$ as $k\to \infty$ by Lemma \ref{lem:convergence of subgroups},
and hence the marked Dyer group $(D_k, S)$ converges to $(D, S)$ as $k\to \infty$.

\smallskip
We shall exhibit the idea concretely.
Since any convergent sequence in $\widehat{\N}$ is eventually constant or converges to $\infty$,
by taking a subsequence if necessary,
we assume that any $(i, j)$-th entry $\{m_k(i, j)\}_{k\geq 1}$ of $M_k$ is constant or strictly increasing.
In the latter case,
we assume that $m_k(i, j)\geq 3$.
It follows that the following subsets $N_2, N_p$, and $N_\infty$ of the index set $\{1,\dots,n\}$ do not depend on $k$;
\begin{align*}
N_2&=\Set{i\in{\{1,\dots,n\}}|m_k(i, i)=2},\\
N_p&=\Set{i\in{\{1,\dots,n\}}|3\leq m_k(i, i)<\infty},\\
N_\infty&=\Set{i\in{\{1,\dots,n\}}|m_k(i, i)=\infty}\,.
\end{align*}
Set $n_2=\#N_2, n_p=\#N_p$, and $n_\infty=\#N_\infty$.
By permuting columns and rows of the matrices $M_k$ and $M$,
we may assume that
\[
N_2=\{1,\dots,n_2\},
N_p=\{n_2+1,\dots,n_2+n_p\},
N_\infty=\{n_2+n_p+1,\dots, n_2+n_p+n_\infty=n\}.
\]

Let us denote the ordered vertex set of $\Gamma_k$ and $\Gamma$ by $V=(v_1,\dots,v_n)$.
Since the index subsets $N_2, N_p$, and $N_\infty$ do not depend on $k$,
two vertices $v_i, v_j$ of $\Gamma_k$ and $\Gamma$ with $i\neq j$ are joined by an edge of weight $m_{ij}$ if $i, j\in N_p\sqcup N_\infty$,
and hence the underlying simple graphs of $\Gamma_k$ and $\Gamma$ are the same for any $k$.
We denote the marked Dyer graphs $\Gamma_k$ and $\Gamma$ as $\Gamma_k=(G, f_k, m_k)$ and $\Gamma=(G, f, m)$, respectively.
It follows that $f_k(v_i)\to f(v_i)$ and $m_k(\{v_i, v_j\})\to m(\{v_i, v_j\})$ as $k\to \infty$ for $1\leq i,j\leq n$.
We write the marked Coxeter graphs $\Lambda_k$ (resp. $\Lambda$) for $(L_k, \mu_k)$ (resp. $(L, \mu)$), where $L_k$ (resp. $L$) is a simple graph whose ordered vertex set is $(v_1,\dots,v_n, v'_{n_2+1},\dots,v'_n)$ and two vertices of $L_k$ (resp. L) are joined by an edge of weight $\mu_k$ (resp. $\mu$) in the following manner:
\begin{itemize}
\item[(i)]
The vertices $v_i$ and $v_j$ are joined by an edge of weight $\mu_k(\{v_i ,v_j\})=m_k(\{v_i, v_j\})$ (resp. $\mu(\{v_i, v_j\})=m(\{v_i, v_j\})$) if $\{v_i, v_j\}\in E(G)$.
\item[(ii)]
The vertices $v_i$ and $v'_i$ for $n_2+1\leq i\leq n$ are joined by an edge of weight $\mu_k(\{v_i ,v'_i\})=f_k(v_i)$ (resp. $\mu(\{v_i ,v'_i\})=f(v_i)$).
\end{itemize}
It follows that $L_k=L$ for any $k$ and $\mu_k(\{v, w\})\to \mu(\{v, w\})$ as $k\to \infty$ for any $\{v, w\}\in E(L)$.
By Theorem \ref{theo:Yukita2024 Theorem 3.6},
we obtain that the associated Coxeter system $(W_k, T)$ converges to $(W, T)$ as $k\to \infty$.

\smallskip
We write the ordered free marking $X$ of $\F_{n+n_2+n_\infty}$ as
\[
X=(x_1,\dots,x_n,x'_{n_2+1},\dots,x'_n).
\]
Since the number $n+n_p+n_\infty$ appears many times in the proof,
we write it $m$ for simplicity.
We define an ordered $n$-element subset $Y$ of $\F_m$ by
\[
Y=(y_1,\dots,y_m) \text{ where }y_i=\begin{cases} x_i & (1\leq i\leq n_2) \\ x_ix'_i & (n_2+i\leq i\leq n)\end{cases}.
\]
Set $H=\langle Y\rangle\subset \F_m$.
For the markings $\pi_k:\F_m\twoheadrightarrow W_k$ and $\pi:\F_m\to W$,
consider the marked subgroups $(\Delta_k, \pi(Y))$ and $(\Delta, \pi(Y))$,
where $\Delta_k=\pi_k(H)$ and $\Delta=\pi(H)$.
It follows from Theorem \ref{theo:Soergel2024 Theorem2.8} that $(D_k, S)\cong (\Delta_k, \pi_k(Y))$ and $(D, S)\cong (\Delta, \pi(Y))$ as $n$-marked groups.
Since Lemma \ref{lem:convergence of subgroups} implies that $(\Delta_k, \pi_k(Y))\to (\Delta, \pi(Y))$ as $k\to \infty$,
we see that $(D_k, S)\to (D, S)$ as $k\to \infty$,
and hence the map $\Phi:\mathcal{DM}_n\to \mathcal{D}_n$ is continuous.
This completes the proof, since $\mathcal{DM}_n$ is compact and $\mathcal{D}_n$ is Hausdorff.
\end{proof}

\begin{cor}\label{cor:convergence in Dyer graph}
Convergence in $\mathcal{D}_n$ can be characterized as follows.
Let $\{(D_k, S)\}_{k\ge 1}$ be a sequence of $n$-marked Dyer systems, and let $\Gamma_k=(\Gamma_k, f_k, m_k)$ be the Dyer graph associated with $(D_k, S)$.
Let $(D, S)$ be an $n$-marked Dyer system, and let $\Gamma=(\Gamma, f, m)$ be its associated Dyer graph.
Then $\{(D_k, S)\}_{k\ge 1}$ converges to $(D, S)$ if and only if $\Gamma_k=\Gamma$ as simple graphs for all sufficiently large $k$,
$\displaystyle \lim_{k\to\infty} f_k(v_i) = f(v_i)$ for every vertex $v_i$,
and $\displaystyle \lim_{k\to\infty} m_k(\{v_i,v_j\}) = m(\{v_i,v_j\})$ for every edge $\{v_i,v_j\}$.
\end{cor}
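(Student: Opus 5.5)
The corollary should follow directly from the theorem that $\Phi:\mathcal{DM}_n\to\mathcal{D}_n$ is a homeomorphism. The plan is to translate convergence in $\mathcal{D}_n$ into convergence of Dyer matrices in $(\widehat{\N}_{\geq 2})^{n^2}$, and then translate entrywise convergence of matrices into the stated graph conditions.

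First, since $\Phi$ is a homeomorphism, $(D_k,S)\to(D,S)$ holds if and only if $M_k:=\Phi^{-1}(D_k,S)\to M:=\Phi^{-1}(D,S)$ in the product topology. That in turn means that for each $(i,j)$, $m_k(i,j)\to m(i,j)$ in $\widehat{\N}$. By construction of the associated Dyer matrix, the diagonal entries are the vertex weights $f_k(v_i)$ and $f(v_i)$. The off-diagonal entries equal $m_k(\{v_i,v_j\})$ on edges and $2$ on non-edges.

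It then remains to show that entrywise convergence of off-diagonal entries is equivalent to two conditions: the simple graphs eventually coincide, and edge weights converge. Here I use that $2$ is an isolated point of $\widehat{\N}_{\geq 2}$, so $\{2\}$ and $\widehat{\N}_{\geq 3}$ are both clopen.

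For the forward direction, take $i\neq j$. If $m(i,j)=2$, then $m_k(i,j)=2$ for large $k$. If $m(i,j)\geq 3$, then $m_k(i,j)\geq 3$ for large $k$. Since there are finitely many pairs, $\{v_i,v_j\}\in E(\Gamma_k)$ if and only if $\{v_i,v_j\}\in E(\Gamma)$ for all large $k$. On each edge, the edge weights are exactly the matrix entries, so they converge.

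For the converse, assume $\Gamma_k=\Gamma$ as simple graphs for large $k$. Then on non-edges the entries are eventually constantly $2$, and on edges the entries are the edge weights, which converge by hypothesis. The diagonal entries converge because the vertex weights do. Hence $M_k\to M$, and applying $\Phi$ gives $(D_k,S)\to(D,S)$.

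No step is a real obstacle. The only point needing care is the isolatedness of $2$ in $\widehat{\N}_{\geq 2}$, which makes "being an edge" an eventually stable condition.
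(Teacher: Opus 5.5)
Your proposal is correct and takes the same route as the paper. The paper gives no separate proof of this corollary and simply reads it off from the theorem that $\Phi\colon\mathcal{DM}_n\to\mathcal{D}_n$ is a homeomorphism; your argument makes explicit the translation from entrywise convergence of Dyer matrices to the graph conditions, using that $2$ is isolated in $\widehat{\N}_{\geq 2}$ so that being an edge is eventually stable.
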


For a Dyer graph $\Gamma=(\Gamma, f, m)$,
set
$V_k(\Gamma)=\Set{v\in V(\Gamma)|f(v)=k}$ for $k\in \widehat{\N}_{\geq 2}$
and
$V_p(\Gamma)=\Set{v\in V(\Gamma)|3\leq f(v)<\infty}$.
Let $(D, S)$ be the Dyer system associated with $\Gamma$.
We write $S_2, S_p$, and $S_\infty$ for the subsets of $S$ corresponding to $V_2, V_p$, and $V_\infty$.
The subgroups of $D$ generated by $S_2, S_p$, and $S_\infty$ are denoted by $D_2, D_p$, and $D_\infty$, respectively.
It follows that the pair $(D_2, S_2)$ is a Coxeter system, and the group $D_p$ (resp. $D_\infty$) is a graph product of finite cyclic groups (resp. a right-angled Artin group).
Let us denote the full subgraph induced by $V_2$ and $V_p\sqcup V_\infty$ by $\Gamma_2$ and $\Gamma_{p, \infty}$, respectively.

\smallskip
A Dyer graph $\Gamma$ is \emph{spherical} (resp. \emph{Euclidean}) if 
(i)
any vertex of $\Gamma_{p, \infty}$ has no edges and
(ii)
the Coxeter system $(D_2, S_2)$ is spherical (resp. Euclidean).
A Dyer system $(D, S)$ is \emph{spherical} (resp. \emph{Euclidean}) if so is the associated Dyer graph.
If $\Gamma$ is spherical or Euclidean,
it is easy to see that $D=D_2\times D_p\times D_\infty$, $D_p$ is finite, and $D_\infty$ is isomorphic to a free abelian group of finite rank.

\begin{lem}\label{lem:subspace of spherical or Euclidean Dyer systems}
The subspace of spherical or Euclidean $n$-marked Dyer systems is closed and open in $\mathcal{D}_n$.
\end{lem}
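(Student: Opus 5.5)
We will use the sequential characterization of convergence in Corollary \ref{cor:convergence in Dyer graph} together with Theorem \ref{theo:clopenness of spherical and Euclidean} for Coxeter systems. Since $\mathcal{D}_n$ is a metric space, it suffices to show two things. First, if $(D_k,S)\to(D,S)$ and every $(D_k,S)$ is spherical or Euclidean, then so is $(D,S)$; this gives closedness. Second, if $(D_k,S)\to(D,S)$ and $(D,S)$ is spherical or Euclidean, then $(D_k,S)$ is spherical or Euclidean for all sufficiently large $k$; this gives openness.

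The common first step is to analyse a convergent sequence $\Gamma_k=(\Gamma_k,f_k,m_k)\to\Gamma=(\Gamma,f,m)$. By Corollary \ref{cor:convergence in Dyer graph}, the underlying simple graphs coincide with $\Gamma$ for large $k$, and $f_k(v)\to f(v)$ in $\widehat{\N}$. We then examine how the partition $V=V_2\sqcup V_p\sqcup V_\infty$ varies with $k$.
\begin{itemize}
\item If $f(v)$ is finite, then $f_k(v)=f(v)$ eventually, so $v$ lies in the same class for $\Gamma_k$ and $\Gamma$.
\item If $f(v)=\infty$, then for large $k$ either $f_k(v)=\infty$ or $f_k(v)$ is finite and large. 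In either case $v\in V_p(\Gamma_k)\sqcup V_\infty(\Gamma_k)$.
\end{itemize}
Hence $V_2(\Gamma_k)=V_2(\Gamma)$ and $V_p(\Gamma_k)\sqcup V_\infty(\Gamma_k)=V_p(\Gamma)\sqcup V_\infty(\Gamma)$ for all large $k$. Since the edge sets also agree eventually, condition (i) of the definition (no edges at vertices of $\Gamma_{p,\infty}$) holds for $\Gamma_k$ if and only if it holds for $\Gamma$, for all large $k$.

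It remains to handle condition (ii). The full subgraphs $(\Gamma_k)_2$ and $\Gamma_2$ have the same vertex set $V_2$ and the same underlying graph for large $k$, and their edge weights converge. By Theorem \ref{theo:Yukita2024 Theorem 3.6}, the Coxeter systems $((D_k)_2,S_2)$ therefore converge to $(D_2,S_2)$ in $\mathcal{C}_{n_2}$, where $n_2=\#V_2$. After reordering, we may regard $S_2$ as the first $n_2$ generators. We then apply Theorem \ref{theo:clopenness of spherical and Euclidean} in $\mathcal{C}_{n_2}$:
\begin{itemize}
\item if $(D_2,S_2)$ is spherical or Euclidean, so is $((D_k)_2,S_2)$ for large $k$;
\item conversely, if infinitely many $((D_k)_2,S_2)$ are spherical or Euclidean, closedness gives that the limit is too.
\end{itemize}
For closedness, we first pass to a subsequence on which all the $\Gamma_k$ have eventually constant $V_2$ and edge set. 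Combining this with the first step yields both directions.

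The main obstacle is the bookkeeping that allows vertices to move between $V_p$ and $V_\infty$ in the limit, namely finite orders $f_k(v)\to\infty$. The point to stress is that the definition of spherical or Euclidean treats $V_p$ and $V_\infty$ uniformly: it only requires these vertices to be isolated. So this transition is harmless. A degenerate case also needs a brief remark: when $V_2=\emptyset$, condition (ii) is vacuous, or refers to the trivial Coxeter system, which is spherical.
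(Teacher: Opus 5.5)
Your proposal is correct and follows essentially the paper's route: Corollary~\ref{cor:convergence in Dyer graph} controls condition (i), and condition (ii) is reduced to Theorem~\ref{theo:clopenness of spherical and Euclidean} applied to the convergent Coxeter systems $((D_k)_2,S_2)\to(D_2,S_2)$ in $\mathcal{C}_{n_2}$. The differences are minor. You obtain that convergence from Theorem~\ref{theo:Yukita2024 Theorem 3.6} applied to the full subgraphs on $V_2$, whereas the paper uses Lemma~\ref{lem:convergence of subgroups}. You also make explicit several points that the paper's short argument leaves implicit: the eventual stabilization of $V_2$ and of $V_p\sqcup V_\infty$, the role of condition (i) in the openness direction, and the case $V_2=\emptyset$.
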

\begin{proof}
First, we show that such a subspace is closed.
Let $\{(D_k, S)\}_{k\ge1}$ be a convergent sequence of $n$-marked spherical or Euclidean Dyer systems, and let $(D, S)\in \mathcal{D}_n$ be its limit.
It follows from Corollary \ref{cor:convergence in Dyer graph} that the Dyer graph associated with $(D, S)$ satisfies the condition (i) of the definition of spherical and Euclidean Dyer graphs.
By Lemma \ref{lem:convergence of subgroups} and Theorem \ref{theo:clopenness of spherical and Euclidean},
we see that the Coxeter system $(D_2, S_2)$ is spherical or Euclidean,
and hence the assertion follows.

\smallskip
We prove that the subspace is open.
Indeed, for a convergent sequence $\{(D_k, S)\}_{k\ge1}$ of $n$-marked non-spherical and non-Euclidean Dyer systems, by Lemma \ref{lem:convergence of subgroups} and Theorem \ref{theo:clopenness of spherical and Euclidean},
we see that the Coxeter system $(D_2, S_2)$ of the limit $(D, S)$ is non-spherical and non-Euclidean.
\end{proof}

%%%%%%%%%%%%%%%%%%%%%%%%%%%%%%%%%%%%%%%%%%%
%%%%%%%%%%%%%%%%%%%%%%%%%%%%%%%%%%%%%%%%%%%
\section{Monotonicity of the growth rates of Dyer systems}\label{section:5}

In this  section,
we show the monotonicity of growth rates of Dyer systems.
The monotonicity of growth rates of Coxeter systems is verified by Terragni \cite{Terragni2016}.

\smallskip
For two simple graphs $\Gamma$ and $\Lambda$,
a \emph{graph morphism} $\phi:\Gamma\to \Lambda$ consists of maps $\phi_V:V(\Gamma)\to V(\Lambda)$ and $\phi_E:E(\Gamma)\to E(\Lambda)$ such that $\phi_V$ is injective and $\{\phi(v), \phi(w)\}\in E(\Lambda)$ for any edge $\{v, w\}\in E(\Gamma)$.

\begin{defi}
Let $(D, S), (D', S')$ be two Dyer systems, and let $\Gamma=(\Gamma, f, m), \Gamma'=(\Gamma', f', m')$ be the associated Dyer graphs.
Define $(D, S)\preceq (D', S')$ if there exists a graph morphism $\phi:\Gamma\to \Gamma'$ such that $f(v)\leq f'(\phi(v))$ for any $v\in V(\Gamma)$ and $m(\{v, w\})\leq m'(\{\phi(v), \phi(w)\})$ for any $\{v, w\}\in E(\Gamma)$.
\end{defi}
In this section,
we extend the following monotonicity theorem to Dyer systems.
\begin{theo}[{\cite[Theorem A]{Terragni2016}}]
Let $(W, S)$ and $(W', S')$ be Coxeter systems with $(W, S)\preceq (W', S')$.
Then $a_{(W, S)}(m)\leq a_{(W', S')}(m)$ for any $m\in \Z_{\geq 0}$.
In particular,
$\tau(W, S)\leq \tau(W', S')$.
\end{theo}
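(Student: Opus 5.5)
The plan is to construct, for each $m$, a length-preserving injection from the sphere of radius $m$ in $(W,S)$ into the sphere of radius $m$ in $(W',S')$. This gives $a_{(W,S)}(m)\le a_{(W',S')}(m)$ directly. Comparing radii of convergence of the growth series (Cauchy--Hadamard) then gives $\tau(W,S)\le\tau(W',S')$.

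\emph{Reduction to two elementary moves.} Any relation $(W,S)\preceq(W',S')$ factors through a chain of elementary steps of two kinds:
\begin{itemize}
\item[(a)] adjoining one vertex with no new edges, which after relabelling amounts to passing from $W_T$ to $W$ for $T\subset S$;
\item[(b)] raising a single label $m_{st}$ to some $m'_{st}>m_{st}$, the case $2\to 3$ being the creation of an edge.
\end{itemize}
Since the inequalities compose, it suffices to treat each step separately.

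\emph{Step (a).} For $T\subset S$, the word length on the standard parabolic subgroup $W_T$ coincides with the restriction of $\abs{\cdot}_S$. This is the standard consequence of the deletion condition: every reduced $S$-word for an element of $W_T$ uses only letters from $T$. Hence the inclusion $W_T\hookrightarrow W$ preserves lengths and maps spheres injectively into spheres.

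\emph{Step (b).} This is the main obstacle. Fix a total order on $S=S'$, and let $L\subset S^*$ be the language of ShortLex normal forms of $W$, namely the lexicographically least reduced word of each element. I would prove that $L$ is contained in the ShortLex language of $W'$. Then distinct elements of $W$ give distinct elements of $W'$ of the same length, which is the required injection.

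The tool is Tits' solution of the word problem. A word is reduced if and only if no word reachable from it by braid moves contains a factor $ss$. Moreover, all reduced words of one element are connected by braid moves alone.

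Take a word $w\in L$. Its braid class $\mathcal{B}_W(w)$ in $W$ is contained in the set of reduced words of $W$. None of these words contains an alternating $st$-factor of length $m_{st}+1$, because such a factor is not reduced in $W$. In particular none contains an alternating factor of length $m'_{st}$.

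Consequently, every $W'$-braid move applicable to a word in $\mathcal{B}_W(w)$ is also a $W$-braid move: the braid relations for pairs other than $\{s,t\}$ are the same in $W$ and $W'$, and no $\{s,t\}$-move of $W'$ can be applied. Hence $\mathcal{B}_{W'}(w)\subseteq\mathcal{B}_W(w)$, and by induction along chains of moves, no factor $ss$ ever appears. So $w$ is reduced in $W'$.

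Furthermore, the set of $W'$-reduced words representing the image of $w$ is $\mathcal{B}_{W'}(w)$, which lies inside $\mathcal{B}_W(w)$. The lexicographic minimum $w$ of the larger set is therefore also the minimum of the smaller one, so $w$ lies in the ShortLex language of $W'$.

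The delicate point I expect to check carefully is that Tits' theorem is applied in $W'$ and not in $W$. The moves must be generated from $w$ using the relations of $W'$, and then shown at each stage to remain inside $\mathcal{B}_W(w)$. This is exactly the induction on chains of $W'$-moves described above.
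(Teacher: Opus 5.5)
Your proof is correct. The paper only cites this result from Terragni, but its proof of the Dyer generalization (Theorem~\ref{theo:monotonicity of growth}) specializes to essentially your argument: reduce $\preceq$ to a parabolic inclusion plus single-label increases, send each element to its ShortLex-minimal reduced word, and use Tits' solution of the word problem to see that the enlarged braid relation can never fire, since that would require an alternating $st$-factor of length $m'_{st}>m_{st}$ inside a $W$-reduced word. Your packaging via $\mathcal{B}_{W'}(w)\subseteq\mathcal{B}_W(w)$ has a small advantage over the paper's write-up, which proves injectivity of $\eta$ and only the length bound $\abs{\eta(w)}\le\abs{w}$: you get exact length preservation, hence an injection of spheres rather than only of balls, and that is what $a_{(W,S)}(m)\le a_{(W',S')}(m)$ literally needs.
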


In order to show the monotonicity of growth rates of Dyer systems,
we recall the solution to the word problem for Dyer groups in \cite{ParisSoergel2023}.

\smallskip
Let $(G, S)$ be a marked group with $S=(s_1,\dots, s_n)$.
We write $\mathcal{S}$ for the subset of $G$ consisting of powers of generators, namely, $\mathcal{S}=\Set{s_i^k\in G|1\leq i\leq n, \ k\in \Z}$,
which is called the \emph{set of syllables of $S$}.
For a generator $s_i$,
a syllable is said to be of \emph{type $s_i$} if it is of the form $s_i^k$.
Since $\mathcal{S}$ also generates $G$,
the word length $\abs{x}_\mathcal{S}$ of $x\in G$ with respect to $\mathcal{S}$ is well-defined,
and is called the \emph{syllabic length of $x$}.
The free monoid on $\mathcal{S}$ is denoted by $\mathcal{S}^\ast$,
in which the product is written as $\boldsymbol{w}\cdot \boldsymbol{w}'$.
We call an element of $\mathcal{S}^\ast$ a \emph{syllabic word on $S$}.
The empty syllabic word is denoted by $\boldsymbol{1}$.
The \emph{degree} $\deg(\boldsymbol{w})$ of a syllabic word $\boldsymbol{w}=(s_1,\dots,s_\ell)\in \mathcal{S}^\ast$ is defined to be $\ell$, $\deg(\boldsymbol{1})=0$ by convention.
Let $\pi_\mathcal{S}:\mathcal{S}^\ast\twoheadrightarrow G$ be the canonical epimorphism.
A syllabic word $\boldsymbol{w}$ is \emph{reduced for $(G, S)$} if $\deg(\boldsymbol{w})=\abs{\pi_\mathcal{S}(\boldsymbol{w})}_\mathcal{S}$.

\smallskip
Let $M$ be an $n\times n$ Dyer matrix,
and let $(D_M, S)$ be the Dyer system associated with $M$,
where $S=(s_1,\dots,s_n)$.
For two syllabic words $\boldsymbol{w}, \boldsymbol{w}'\in \mathcal{S}^\ast$,
\begin{itemize}
\item[(I)]
We say that we go from $\boldsymbol{w}$ to $\boldsymbol{w}'$ through an \emph{elementary $M$-operation of type I},
denoted by $\boldsymbol{w}\xrightarrow{M^{(1)}} \boldsymbol{w}'$,
if $\boldsymbol{w}'$ is obtained from $\boldsymbol{w}$ by concatenating two consecutive syllables of the same type;
$\boldsymbol{w}$ can be written as $\boldsymbol{w}=\boldsymbol{w}_1\cdot (s, t)\cdot \boldsymbol{w}_2$ such that 
$\boldsymbol{w}'=
\begin{cases}
\boldsymbol{w}_1\cdot (st)\cdot \boldsymbol{w}_2 & \text{if }st\neq 1\\
\boldsymbol{w}_1\cdot \boldsymbol{w}_2 & \text{if }st=1
\end{cases}$,
where $\boldsymbol{w}_1, \boldsymbol{w}_2\in S^\ast, s,t\in \mathcal{S}$, and $st\in \mathcal{S}\cup{\{1\}}$.
\item[(II)]
we say that we go from $\boldsymbol{w}$ to $\boldsymbol{w}'$ through an \emph{elementary $M$-operation of type II},
denoted by $\boldsymbol{w}\xrightarrow{M^{(2)}} \boldsymbol{w}'$,
if $\boldsymbol{w}'$ is obtained from $\boldsymbol{w}$ by replacing a syllabic subword $[s, t]_m$ of $\boldsymbol{w}$ into $[t, s]_m$ where $[s, t]_m=(s, t, s, \dots)\in \mathcal{S}^\ast$ with $\deg([s, t]_m)=m$;
$\boldsymbol{w}$ can be written as $\boldsymbol{w}=\boldsymbol{w}_1\cdot [s, t]_m\cdot \boldsymbol{w}_2$ such that 
$\boldsymbol{w}'=\boldsymbol{w}_1\cdot [t, s]_m\cdot \boldsymbol{w}_2$,
where $\boldsymbol{w}_1, \boldsymbol{w}_2\in S^\ast, s,t\in \mathcal{S}, m\geq 2$, and the syllabic word $[s, t]_m$ is reduced for $(D_M, S)$.
\end{itemize}
For two syllabic words $\boldsymbol{w}, \boldsymbol{w}'\in \mathcal{S}^\ast$,
we write $\boldsymbol{w}\stackrel{M}{\leadsto} \boldsymbol{w}'$ if we can go from $\boldsymbol{w}$ to $\boldsymbol{w}'$ through a finite sequence of elementary $M$-operations.
In particular,
if we only use elementary $M$-operations of type I (resp. type II),
then we write $\boldsymbol{w}\stackrel{M^{(1)}}{\leadsto} \boldsymbol{w}'$ (resp. $\boldsymbol{w}\stackrel{M^{(2)}}{\leadsto} \boldsymbol{w}'$).
It follows that $\pi_\mathcal{S}(\boldsymbol{w})=\pi_\mathcal{S}(\boldsymbol{w}')$ for syllabic word $\boldsymbol{w}\leadsto \boldsymbol{w}'$.
We say that a syllabic word $\boldsymbol{w}\in \mathcal{S}^\ast$ is \emph{$M$-reduced} if $\deg(\boldsymbol{w})\leq \deg(\boldsymbol{w}')$ for any syllabic word $\boldsymbol{w}'$ such that $\boldsymbol{w}\stackrel{M}{\leadsto}\boldsymbol{w}'$.

\begin{theo}[{\cite[Theorem 2.2]{ParisSoergel2023}}]\label{theo:solution to the word problem of Dyer system}
Let $M$ be an $n\times n$ Dyer matrix,
and let $(D_M, S)$ be the Dyer system associated with $M$.

(a)
For all $\boldsymbol{w}\in \mathcal{S}^\ast$,
$\boldsymbol{w}$ is reduced for $(D_M, S)$ if and only if $\boldsymbol{w}$ is $M$-reduced.

(b)
For all $\boldsymbol{w}, \boldsymbol{w}'\in \mathcal{S}^\ast$,
if $\boldsymbol{w}$ and $\boldsymbol{w}'$ are both reduced and $\pi_\mathcal{S}(\boldsymbol{w})=\pi_\mathcal{S}(\boldsymbol{w}')$,
then we can go from $\boldsymbol{w}$ to $\boldsymbol{w}'$ through a finite sequence of elementary $M$-operations of type II.
\end{theo}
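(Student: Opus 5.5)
Since this is \cite[Theorem 2.2]{ParisSoergel2023}, we only sketch how one would prove it. The plan is to adapt Tits' solution of the word problem for Coxeter groups to syllabic words.

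\textbf{The easy direction of (a).} Elementary $M$-operations preserve the image under $\pi_\mathcal{S}$. Hence if $\boldsymbol{w}$ is reduced, every $\boldsymbol{w}'$ with $\boldsymbol{w}\stackrel{M}{\leadsto}\boldsymbol{w}'$ satisfies
\[
\deg(\boldsymbol{w}')\ge \abs{\pi_\mathcal{S}(\boldsymbol{w})}_\mathcal{S}=\deg(\boldsymbol{w}),
\]
so $\boldsymbol{w}$ is $M$-reduced.

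\textbf{Setting up an action.} Everything else would follow from a combinatorial model of $D_M$. Let $\Omega$ be the set of $M$-reduced syllabic words modulo the equivalence generated by type II operations. For each syllable $\sigma=s_i^k$ we define a map $L_\sigma:\Omega\to\Omega$ as follows. If some representative of the class $[\boldsymbol{w}]$ begins with a syllable $s_i^\ell$, multiply $\sigma$ into that first syllable (a type I operation), deleting it if $s_i^{k+\ell}=1$. Otherwise, prepend $\sigma$.

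We would then show three things:
\begin{itemize}
\item[(1)] $L_\sigma$ is well defined and lands in $\Omega$, i.e.\ the result is again $M$-reduced.
\item[(2)] $L_{s_i^k}\circ L_{s_i^\ell}=L_{s_i^{k+\ell}}$, so each generator acts through a cyclic group of order dividing $f_i$.
\item[(3)] The relation $[L_{s_i},L_{s_j}]_{m_{ij}}=[L_{s_j},L_{s_i}]_{m_{ij}}$ holds.
\end{itemize}
These give a homomorphism $D_M\to \mathrm{Sym}(\Omega)$. For a word $\boldsymbol{w}$, applying its syllables to the class $[\boldsymbol{1}]$ yields a class in $\Omega$ of degree at most $\deg(\boldsymbol{w})$ that depends only on $\pi_\mathcal{S}(\boldsymbol{w})$. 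Moreover, an $M$-reduced $\boldsymbol{w}$ is sent to its own class.

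\textbf{Deducing (a) and (b).} Suppose $\boldsymbol{w}$ is $M$-reduced and $\boldsymbol{v}$ is any word with the same image. Then $[\boldsymbol{w}]=[\boldsymbol{v}\text{ applied to }\boldsymbol{1}]$, so $\deg\boldsymbol{w}\le\deg\boldsymbol{v}$. Hence $\boldsymbol{w}$ is reduced, which is (a). Now let $\boldsymbol{w},\boldsymbol{w}'$ both be reduced with the same image. By (a) they are $M$-reduced, and both are sent to the same class, so they are related by type II operations. This is (b).

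\textbf{The main obstacle.} The hard part is verifying (1) and (3). Both rest on a rank-two analysis and an exchange-type lemma. The rank-two analysis describes reduced syllabic words in the subgroups $\langle s_i,s_j\rangle$: these are dihedral when $f_i=f_j=2$, and $C_{f_i}\times C_{f_j}$ or $C_{f_i}\ast C_{f_j}$ otherwise. The exchange-type lemma states: if a class admits representatives beginning with syllables of two distinct types $s_i$ and $s_j$, then $m_{ij}<\infty$ and it has a representative beginning with a reduced alternating word $[\,s_i^{a},s_j^{b}\,]_{m_{ij}}$.

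I would try to prove this lemma by induction on degree, following Tits' argument. The new feature compared with the Coxeter case is that higher-order syllables only ever occur with $m_{ij}\in\{2,\infty\}$. The commuting case $m_{ij}=2$ is handled by syllable swaps, and the case $m_{ij}=\infty$ imposes no relation. This should keep the case analysis manageable.

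An alternative would be to transfer Tits' theorem through Soergel's embedding (Theorem \ref{theo:Soergel2024 Theorem2.8}). However, syllabic length is not preserved under $s\mapsto vv'$, so I expect the direct approach to be cleaner.
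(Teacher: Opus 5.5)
The paper does not prove this statement. It quotes it from Paris--Soergel and uses it as a black box, so your outline has to stand on its own. Its formal part does. The easy half of (a) is right. Given a well-defined action $s_i\mapsto L_{s_i}$ of $D_M$ on $\Omega$ satisfying (1)--(3), your deduction of the converse of (a) and of (b) is also correct. In particular, an $M$-reduced $\boldsymbol{w}=(\sigma_1,\dots,\sigma_r)$ is sent to $[\boldsymbol{w}]$: segments of $M$-reduced words are $M$-reduced, and a representative of $[(\sigma_2,\dots,\sigma_r)]$ beginning with the type of $\sigma_1$ would permit a type I operation on a word type-II-equivalent to $\boldsymbol{w}$.

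The gap is that (1) and (3) carry the entire content of the theorem, and you only announce them. Three things must be proved.

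(i) If $\sigma$ has type $s_i$ and no representative of $[\boldsymbol{w}]$ begins with a syllable of type $s_i$, then $\sigma\cdot\boldsymbol{w}$ is $M$-reduced. This is a syllabic deletion condition, and your exchange-type lemma does not directly give it. $M$-reducedness is invariant under type II moves. Hence, in any witness that $\sigma\cdot\boldsymbol{w}$ is not $M$-reduced, the chain of type II moves before the first type I operation lies entirely outside $\Omega$, where your lemma says nothing.

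(ii) Well-definedness of $L_\sigma$ requires left cancellation: if $\sigma\cdot\boldsymbol{u}$ and $\sigma\cdot\boldsymbol{u}'$ are type-II-equivalent, then so are $\boldsymbol{u}$ and $\boldsymbol{u}'$.

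(iii) For $3\le m_{ij}<\infty$, relation (3) needs a factorization of each class. It must split as an alternating $\{s_i,s_j\}$-word times a class with no representative beginning with $s_i$ or $s_j$, uniquely up to the braid move.

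When $s_i$ lies on no edge with $3\le m_{ij}<\infty$ (in particular when $f_i\ge 3$), (i) and (ii) are easy. The syllable only ever commutes, so it can be tracked along a chain and deleted, as for graph products of cyclic groups. This is all your remark about $m_{ij}\in\{2,\infty\}$ buys. Otherwise $\sigma$ can be absorbed into a braid move $[s_i,s_j]_{m_{ij}}\to[s_j,s_i]_{m_{ij}}$, and tracking fails. When all $f_i=2$, the statement is exactly Tits' solution of the word problem. So the hope that ``the case analysis stays manageable'' gives no leverage where it is needed.

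To close the gap you have two options. You can run Tits' combinatorial argument as a single induction on degree, proving (i), (ii), the exchange lemma and the rank-two factorization simultaneously in this mixed setting. Alternatively, you can obtain a syllabic exchange condition from an independent model of $D_M$ on which syllabic length can be read off, and then argue as in the Coxeter case. For Coxeter groups that role is played by the root system or a reflection cocycle.
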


Given $w\in D_M$,
set $R_M(w)$ to be the set of all the syllabic words reduced for $(D_M, S)$ which are mapped to $w$ by $\pi_\mathcal{S}$,
that is,
\[
R_M(w)=\Set{\boldsymbol{w}\in \mathcal{S}^\ast|\deg(\boldsymbol{w})=\abs{w}_\mathcal{S}, \ \pi_\mathcal{S}(\boldsymbol{w})=w}.
\]
When we give a total order $\leq $ on the set $\mathcal{S}$ of syllables,
the ShortLex order $\leq$ on $\mathcal{S}^\ast$ is defined as follows (see \cite{EpsteinCannonHoltLevyPetersonThurston1992}).
Syllabic words $\boldsymbol{w}<\boldsymbol{w}'$ if $\deg(\boldsymbol{w})<\deg(\boldsymbol{w}')$, or $s_k<s'_k$ in $\mathcal{S}$ where $s_k, s'_k$ are the first syllables of $\boldsymbol{w}, \boldsymbol{w}'$ different from each other.
Since the ShortLex order is a well-ordering,
we choose the minimal syllabic word $\sigma_M(w)\in R_M(w)$ for $w\in D_M$ whenever $\mathcal{S}$ is totally ordered.
It follows that $\sigma_M:D_M\hookrightarrow \mathcal{S}^\ast$ is a section of $\pi_\mathcal{S}:\mathcal{S}^\ast\twoheadrightarrow D_M$.
Note that the choice of $\sigma_M(w)$ depends on the total order on $\mathcal{S}$, but it does not matter in the sequel.
Since both the elementary $M$-operations of types I and II do not increase the degrees of syllabic words,
we see the following corollary from Theorem \ref{theo:solution to the word problem of Dyer system},
which is similar to Corollary 2.4 in \cite{Terragni2016}.

\begin{cor}\label{cor:reduced syllabic word}
Let $M$ be an $n\times n$ Dyer matrix,
and let $(D_M, S)$ be the Dyer system associated with $M$.
Fix a total ordering on $\mathcal{S}$.
Then the following hold for $\boldsymbol{w}, \boldsymbol{w}'\in \mathcal{S}^\ast$ and $w\in D_M$.

(1)
If $\boldsymbol{w}\in R_M(w)$ and $\boldsymbol{w}\stackrel{M}{\leadsto}\boldsymbol{w}'$,
then $\boldsymbol{w}'\in R_M(w)$.

(2)
If $\pi_\mathcal{S}(\boldsymbol{w})=\pi_\mathcal{S}(\boldsymbol{w}')$ and $\boldsymbol{w}'$ is reduced for $(D_M, S)$,
then $\boldsymbol{w}\stackrel{M}{\leadsto} \boldsymbol{w}'$.
\end{cor}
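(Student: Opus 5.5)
Both parts will be deduced from Theorem \ref{theo:solution to the word problem of Dyer system}, using only the fact that elementary $M$-operations never increase the degree.

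For (1), let $\boldsymbol{w}\in R_M(w)$ and $\boldsymbol{w}\stackrel{M}{\leadsto}\boldsymbol{w}'$. Every elementary $M$-operation preserves the image under $\pi_\mathcal{S}$, so $\pi_\mathcal{S}(\boldsymbol{w}')=w$. Every elementary operation also has degree $\le$ that of its input: type I lowers the degree by one or two, and type II keeps it. Hence $\deg(\boldsymbol{w}')\le\deg(\boldsymbol{w})=\abs{w}_\mathcal{S}$. On the other hand, $\deg(\boldsymbol{w}')\ge\abs{\pi_\mathcal{S}(\boldsymbol{w}')}_\mathcal{S}=\abs{w}_\mathcal{S}$ by the definition of syllabic length. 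Equality follows, so $\boldsymbol{w}'\in R_M(w)$.

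For (2), the plan is to first pass from $\boldsymbol{w}$ to some reduced word by $M$-operations. I would then connect that word to $\boldsymbol{w}'$ by part (b) of Theorem \ref{theo:solution to the word problem of Dyer system}.

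For the first step, consider the set $\Omega=\Set{\boldsymbol{u}\in\mathcal{S}^\ast|\boldsymbol{w}\stackrel{M}{\leadsto}\boldsymbol{u}}$, which contains $\boldsymbol{w}$ via the empty sequence. Choose $\boldsymbol{u}\in\Omega$ of minimal degree; this exists because degrees are nonnegative integers. If $\boldsymbol{u}\stackrel{M}{\leadsto}\boldsymbol{u}'$, then $\boldsymbol{u}'\in\Omega$ by concatenating the two sequences, so $\deg(\boldsymbol{u})\le\deg(\boldsymbol{u}')$. Thus $\boldsymbol{u}$ is $M$-reduced, and by part (a) it is reduced for $(D_M,S)$. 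Moreover $\pi_\mathcal{S}(\boldsymbol{u})=\pi_\mathcal{S}(\boldsymbol{w})=\pi_\mathcal{S}(\boldsymbol{w}')$.

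Now $\boldsymbol{u}$ and $\boldsymbol{w}'$ are both reduced and have the same image. Part (b) then gives $\boldsymbol{u}\stackrel{M^{(2)}}{\leadsto}\boldsymbol{w}'$. Concatenating the two sequences of operations yields $\boldsymbol{w}\stackrel{M}{\leadsto}\boldsymbol{w}'$.

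The only subtle point is in (2): the relation $\stackrel{M}{\leadsto}$ is not symmetric, because type I operations cannot be reversed. So one cannot simply run the argument from $\boldsymbol{w}'$ back to $\boldsymbol{w}$. Choosing a minimal-degree element of the forward orbit $\Omega$ avoids this, and it is what turns the $M$-reducedness of part (a) into a genuine reduced word reachable from $\boldsymbol{w}$. The fixed total order on $\mathcal{S}$ plays no role in either part.
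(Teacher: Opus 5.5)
Your proof is correct and follows the route the paper indicates. Part (1) uses only that elementary $M$-operations preserve $\pi_\mathcal{S}$ and never increase the degree, and part (2) combines this with parts (a) and (b) of Theorem \ref{theo:solution to the word problem of Dyer system}; this is exactly what the paper's one-line justification (modelled on Terragni's Corollary 2.4) intends, and your choice of a minimal-degree word in the forward orbit $\Omega$ is the detail, needed because $\stackrel{M}{\leadsto}$ is not symmetric, that makes that justification rigorous.
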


The remainder of this section is devoted to the proof of the following monotonicity of the growth rates of Dyer systems.
\begin{theo}\label{theo:monotonicity of growth}
Let $M$ (resp. $M'$) be $n\times n$ (resp. $n'\times n'$) Dyer matrices with the associated marked Dyer systems $(D_M, S)\preceq (D_{M'}, S')$.
Then $a_{(D, S)}(m)\leq a_{(D', S')}(m)$ for any $m\in \Z_{\geq 0}$.
In particular,
$\tau(D_M, S)\leq \tau(D_{M'}, S')$.
\end{theo}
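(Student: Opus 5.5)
We construct a length-preserving injection from the sphere of radius $m$ in $(D_M,S)$ into the sphere of radius $m$ in $(D_{M'},S')$, following Terragni's strategy for Coxeter systems but working with the syllabic words of Theorem \ref{theo:solution to the word problem of Dyer system}. Let $\phi:\Gamma_M\to\Gamma_{M'}$ be the graph morphism in the definition of $\preceq$. After reordering $S'$ we may assume $\phi(v_i)=v'_i$ for $1\le i\le n$, so $m_{ii}\le m'_{ii}$ and $m_{ij}\le m'_{ij}$ for $i\ne j$.

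\emph{Step 1: word length is the weight of a reduced syllabic word.} Give a syllable $s_i^k$ (with $k$ taken modulo $f_i=m_{ii}$) the weight $\ell(s_i^k)=\min\{|k|,\,f_i-|k|\}$, which is its length with respect to $S$. Extend $\ell$ additively to $\mathcal{S}^\ast$. We check that both kinds of elementary $M$-operations do not increase $\ell$.
\begin{itemize}
\item[(a)] A type I operation merges two syllables of the same type, and $\ell$ of the product is at most the sum of the $\ell$'s.
\item[(b)] A type II operation with $m_{ij}$ even preserves the multiset of syllables.
\item[(c)] A type II operation with $m_{ij}$ odd forces $m_{ij}\ge 3$, hence $f_i=f_j=2$, and all syllables involved have weight $1$.
\end{itemize}
Every word in $S\cup S^{-1}$ is a syllabic word, and Corollary \ref{cor:reduced syllabic word}(2) connects it to any reduced syllabic word of the same element. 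Hence $|w|_S=\ell(\boldsymbol{w})$ for every $\boldsymbol{w}\in R_M(w)$, and $\ell$ is constant on $R_M(w)$. The same holds for $M'$.

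\emph{Step 2: the map.} Represent each syllable $s_i^k\ne 1$ by its exponent $k$ chosen in $(-f_i/2,\,f_i/2]$, and set $\iota(s_i^k)=s_i'^{\,k}$. Since $f_i\le f'_i$, we have $s_i'^{\,k}\neq1$ and $\ell'(\iota(s))=\ell(s)$; indeed $|k|\le f_i/2\le f'_i/2$, including the boundary case $k=f_i/2$. Extend $\iota$ letterwise to an injective monoid map $\mathcal{S}^\ast\to\mathcal{S}'^\ast$ preserving $\deg$ and $\ell$. Define
\[
\Psi(w)=\pi_{\mathcal{S}'}\bigl(\iota(\sigma_M(w))\bigr).
\]

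\emph{Step 3 (main obstacle): $\iota$ carries reduced words to reduced words, and moves pull back.} The key claim is this: if $\boldsymbol{w}$ is $M$-reduced, then every elementary $M'$-operation applicable to $\iota(\boldsymbol{w})$ is $\iota$ of an elementary $M$-operation of type II on $\boldsymbol{w}$. We argue as follows.
\begin{itemize}
\item[(a)] No type I operation applies to $\iota(\boldsymbol{w})$. Adjacent syllables of the same type in $\iota(\boldsymbol{w})$ would give adjacent syllables of the same type in $\boldsymbol{w}$, contradicting reducedness.
\item[(b)] Suppose $\iota(\boldsymbol{w})$ contains an alternating subword $[\iota s,\iota t]_{m'_{ij}}$ with $m'_{ij}>m_{ij}$. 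This covers the case of a new edge, where $m_{ij}=2<m'_{ij}$. Then $\boldsymbol{w}$ contains an alternating subword of length $m_{ij}+1$. In $D_M$ this subword equals $s\cdot[s,t]_{m_{ij}}$ or its mirror. Applying the $m_{ij}$-braid makes two syllables of type $s$ adjacent, and a type I merge lowers the degree. For $m_{ij}=2$ with syllables of higher order, this is the computation $s^at^bs^c=s^{a+c}t^b$. This contradicts the fact that $\boldsymbol{w}$ is $M$-reduced.
\item[(c)] Hence the only applicable braids in $\iota(\boldsymbol{w})$ have length $m_{ij}=m'_{ij}$, and these are images of $M$-moves on $\boldsymbol{w}$.
\end{itemize}
Inducting along sequences of moves, the words reachable from $\iota(\boldsymbol{w})$ by $M'$-operations are exactly the $\iota$-images of words reachable from $\boldsymbol{w}$ by $M$-operations. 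In particular none of them has smaller degree, so $\iota(\boldsymbol{w})$ is $M'$-reduced by Theorem \ref{theo:solution to the word problem of Dyer system}(a). I expect this case analysis, especially the higher-order syllables along new edges, to be the step needing the most care.

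\emph{Step 4: conclusion.} Suppose $\Psi(w_1)=\Psi(w_2)$. Both $\iota(\sigma_M(w_1))$ and $\iota(\sigma_M(w_2))$ are reduced by Step 3, so Theorem \ref{theo:solution to the word problem of Dyer system}(b) connects them by $M'$-moves of type II. By Step 3 these moves pull back to $M$-moves from $\sigma_M(w_1)$ to $\sigma_M(w_2)$, so $w_1=w_2$ and $\Psi$ is injective. By Steps 1 and 3, $|\Psi(w)|_{S'}=\ell'(\iota(\sigma_M(w)))=\ell(\sigma_M(w))=|w|_S$, so $\Psi$ maps spheres of radius $m$ into spheres of radius $m$. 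Therefore $a_{(D_M,S)}(m)\le a_{(D_{M'},S')}(m)$ for all $m\ge0$, which gives $b_{(D_M,S)}(m)\le b_{(D_{M'},S')}(m)$ and hence $\tau(D_M,S)\le\tau(D_{M'},S')$.
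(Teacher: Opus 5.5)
Your argument is correct, and it reorganizes the paper's proof rather than reproducing it. The paper proceeds in three stages:
\begin{enumerate}
\item It replaces $\Gamma'$ by the full subgraph $\Gamma''$ on $\phi(V(\Gamma))$, using Lemma 2.5 of Paris--Soergel to know that $D''$ is isometrically embedded in $D'$.
\item It factors $(D,S)\preceq(D'',S'')$ into elementary steps: raise one vertex weight, raise one edge weight, or add one edge.
\item For each step it proves injectivity of $\eta=\pi_{\mathcal{S}'}\circ\widetilde{\phi}\circ\sigma_M$ by pulling back the chains from $\widetilde{\phi}(\sigma_M(w_i))$ to $\sigma_{M'}(w')$. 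A separate exponent-sum estimate then gives only $|\eta(w)|_{S'}\le|w|_S$.
\end{enumerate}

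You instead prove a single stronger lemma: $\iota$ sends $M$-reduced words to $M'$-reduced words, and every $M'$-move on such an image is the image of an $M$-move. This treats all weight increases and the extra vertices of $\Gamma'$ at once. Moves never create syllables of new types, so you need no parabolic-subgroup lemma. The same lemma gives injectivity together with the exact equality $|\Psi(w)|_{S'}=|w|_S$.

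That equality is what makes $\Psi$ map spheres into spheres. As written, the paper's injection is only ball-to-ball. That gives $b_{(D,S)}(m)\le b_{(D',S')}(m)$ and hence the inequality of growth rates. The coefficientwise claim for $a_{(D,S)}(m)$, however, needs the length preservation that you establish.

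Two minor remarks:
\begin{itemize}
\item In Step 3, the set of words $M'$-reachable from $\iota$ of a reduced word is contained in, not ``exactly'' equal to, the set of $\iota$-images of $M$-reachable words. An $M$-commutation along a pair with $m_{ij}=2<m'_{ij}$ has no $M'$-counterpart. Only the inclusion is used, so nothing breaks.
\item The higher-order-syllable case you worried about in 3(b) is vacuous. An $M'$-braid along a pair with $m'_{ij}>m_{ij}$ needs $3\le m'_{ij}<\infty$. This forces $f'_i=f'_j=2$ and hence $f_i=f_j=2$. Meanwhile, new or raised edges at vertices of order at least $3$ have weight $\infty$ and admit no moves.
\end{itemize}
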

The proof proceeds as follows.
Fix a graph morphism $\phi:\Gamma\to \Gamma'$ arising from $(D_M, S)\preceq (D_{M'}, S')$,
where $\Gamma$ and $\Gamma'$ are the Dyer graphs associated with $(D_M, S)$ and $(D_{M'}, S')$,
respectively.
We consider the Dyer system $(D_{M''}, S'')$ associated with the image $\phi(\Gamma)$.
Take a sequence $(D_0, S_0),\dots,(D_m, S_m)$ of Dyer systems satisfying the following conditions.
\begin{itemize}
\item[(i)]
$(D_M, S)=(D_0, S_0)$,
$(D_{M''}, S'')=(D_m, S_m)$.
\item[(ii)]
$(D_{k-1}, S_{k-1})\preceq (D_k, S_k)$ for $1\leq k\leq m$.
\item[(iii)]
The Dyer graph $(\Gamma_k, f_k, m_k)$ of $(D_k, S_k)$ is obtained from the Dyer graph $(\Gamma_{k-1}, f_{k-1}, m_{k-1})$ of $(D_{k-1}, S_{k-1})$ by one of the following operations:
increasing the vertex weight $f_{k-1}(v)$ of the unique vertex $v$ to $f_k(v)$,
increasing the edge weight $m_{k-1}(\{v_i, v_j\})$ of the unique edge $\{v_i, v_j\}$ to $m_k(\{v_i, v_j\})$,
or adding to $\Gamma_{k-1}$ the unique edge $\{v_i, v_j\}$ with weight $m_k(\{v_i, v_j\})$.
\end{itemize}
It therefore suffices to verify the assertion for $(D_{k-1}, S_{k-1})\preceq (D_k, S_k)$ and $(D_{M''}, S'')\preceq (D_{M'}, S')$.

\smallskip
We use the following notations:
Let $M$ and $M'$ be $n\times n$ and $n'\times n'$ Dyer matrices.
We denote the associated Dyer systems and graphs by $(D, S), (D', S')$, and $\Gamma=(\Gamma_0, f, m), \Gamma'=(\Gamma'_0, f', m')$, respectively.
Suppose that $(D, S)\preceq (D', S')$.
In view of condition (iii),
we introduce the following elementary operations on Dyer graphs.

\smallskip
\noindent
We say that $\Gamma'$ is obtained from $\Gamma$ by \emph{increasing the vertex weight at $v$} if the underlying simple graphs coincide, the edge weights satisfy $m=m'$,
and the vertex weights agree except at $v$, where $f(v)<f'(v)$.

\smallskip
\noindent
We say that $\Gamma'$ is obtained from $\Gamma$ by \emph{increasing the edge weight at $\{v_i,v_j\}$} if the underlying simple graphs coincide, the vertex weights satisfy $f=f'$,
and the edge weights agree except at $\{v_i,v_j\}$, where $m(\{v_i,v_j\})<m'(\{v_i,v_j\})$.

\smallskip
\noindent
We say that $\Gamma'$ is obtained from $\Gamma$ by \emph{adding the edge $\{v_i,v_j\}$ with weight $m'(\{v_i,v_j\})$} if $V(\Gamma)=V'(\Gamma)$, the vertex weights satisfy $f=f'$,
the edge weights agree on $E(\Gamma)$, 
and $\{v_i,v_j\}\notin E(\Gamma)$ but $\{v_i,v_j\}\in E(\Gamma')$.

\medskip
We write $B_\mathcal{S}(R)$ (resp. $B_{\mathcal{S}'}(R)$) for the set of elements of $D$ (resp. $D'$) whose syllabic lengths are at most $R$,
namely,
\[
B_{\mathcal{S}}(R)=\Set{w\in D|\abs{w}_\mathcal{S}\leq R},\quad
B_{\mathcal{S}'}(R)=\Set{w\in D'|\abs{w}_{\mathcal{S}'}\leq R}.
\]
We fix a graph morphism $\phi:\Gamma_0\to \Gamma'_0$ arising from $(D, S)\preceq (D', S')$.
For each syllable $s_i^k$ of type $s_i$,
we fix a representative so that $\abs{s_i^k}_S=\abs{k}$.
Then, it follows from the inequality $f(v)\leq f'(\phi(v))$ that $\phi$ extends to an injective map $\phi:\mathcal{S}\to \mathcal{S}'$ defined by $\phi(s_i^k)=\phi(s_i)^k$ for $1\leq i\leq n$.
By the universality of the free monoid,
we obtain a monomorphism $\widetilde{\phi}:\mathcal{S}^\ast\to \mathcal{S}'^\ast$ defined as $\widetilde{\phi}(s_{i_1}^{a_1},\dots,s_{i_k}^{a_k})=(\phi(s_{i_1})^{a_1},\dots,\phi(s_{i_k})^{a_k})$.
It follows that $\deg(\widetilde{\phi}(\boldsymbol{w}))=\deg(\boldsymbol{w})$ for any $\boldsymbol{w}\in \mathcal{S}^\ast$, and $\widetilde{\phi}$ sends syllables on $S$ of different types to syllables on $S'$ of different types.
We define a map $\eta:D\to D'$ by $\eta=\pi_{\mathcal{S}'}\circ \widetilde{\phi}\circ \sigma_M$.

\begin{lem}\label{lem:syllabic inclusion}
Suppose that $\Gamma'$ is obtained from $\Gamma$ by increasing the vertex weight at $v$ or the edge weight at $\{v_i, v_j\}$, or adding the edge $\{v_i, v_j\}$ with weight $m'(\{v_i, v_j\})$.
Then the map $\eta:D\to D'$ is injective.
\end{lem}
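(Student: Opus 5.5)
The plan is to show that $\widetilde{\phi}$ sends $M$-reduced syllabic words to $M'$-reduced syllabic words. Injectivity of $\eta$ will then follow from the uniqueness of the ShortLex normal form. In all three elementary situations $\phi$ is the identity on vertices, so $\widetilde{\phi}$ is a monomorphism $\mathcal{S}^\ast\to\mathcal{S}'^\ast$ that preserves degree and types of syllables. The only difference is that a syllable $s_i^k$ with $|k|<f(v_i)/2$ (the chosen representative) is read in $D'$, where $f'(v_i)\geq f(v_i)$ and so $s_i^k\neq 1$ still holds.

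\textbf{Step 1 (reducedness is preserved).} Let $\boldsymbol{w}=\sigma_M(w)$. By Theorem \ref{theo:solution to the word problem of Dyer system}(a), it suffices to show that $\widetilde{\phi}(\boldsymbol{w})$ is $M'$-reduced. Suppose instead that some chain of elementary $M'$-operations starting from $\widetilde{\phi}(\boldsymbol{w})$ decreases the degree, and take such a chain of minimal length. I would argue by induction along the chain that every word appearing in it before the first degree-dropping step lies in the image of $\widetilde{\phi}$, and is the image of a word obtained from $\boldsymbol{w}$ by $M$-operations.

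For type II moves, the braid relation $[s,t]_{m'}=[t,s]_{m'}$ with $s=s_i^a$, $t=s_j^b$ requires a reduced alternating word of length $m'$ in the subgroup $\langle s_i,s_j\rangle$ of $D'$. That subgroup is a dihedral group, a free product of cyclic groups, or an abelian group. Only the modified vertex or edge matters.
\begin{itemize}
\item If the edge weight increases from $m$ to $m'$, then a reduced alternating word of length $m'>m$ cannot have come from $\boldsymbol{w}$. Its prefix of length $m+1$ would be non-reduced in $D$, contradicting the reducedness of $\boldsymbol{w}$ together with Corollary \ref{cor:reduced syllabic word}(1).
\item Moves with $m'=m$ pull back directly.
\end{itemize}
For type I moves, two adjacent syllables of the same type come from adjacent syllables of the same type in the preimage. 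Merging them in $D$ would already lower the degree or give a valid type I move. The case $s_i^as_i^b=1$ in $D'$ forces $a+b\equiv 0 \pmod{f'}$, hence modulo $f$ as well. So any degree drop in $D'$ pulls back to a degree drop in $D$, which is a contradiction.

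\textbf{Step 2 (injectivity).} Suppose $\eta(w_1)=\eta(w_2)$. By Step 1, $\widetilde{\phi}(\sigma_M(w_1))$ and $\widetilde{\phi}(\sigma_M(w_2))$ are both reduced words for the same element of $D'$. By Theorem \ref{theo:solution to the word problem of Dyer system}(b), they are connected by type II $M'$-operations. I then pull this chain back as in Step 1. Each intermediate word is reduced and has the same syllable multiset of types. Any braid move of length $m'$ on the increased edge would again exhibit a reduced alternating word of length exceeding $m$, which is impossible in $D$ by Step 1's argument applied to the current preimage word. For the added edge, with $m=2$ in $D$ and $m'\ge 3$ in $D'$, the same reasoning applies. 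Hence the chain lifts to a chain of $M$-operations from $\sigma_M(w_1)$ to $\sigma_M(w_2)$, and therefore $w_1=w_2$.

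The main obstacle is the lifting argument for moves on the modified edge. One must show that an alternating subword $[s,t]_{m'}$ with $m'>m$ which is reduced in $D'$ never appears in a word coming from a $D$-reduced word. The reason is that any alternating subword of length greater than $m$ in syllables of types $s_i,s_j$ contains $[s,t]_{m+1}$, and this subword is non-reduced in the dihedral, cyclic-product, or abelian subgroup of $D$. I would verify this case by case (Coxeter edge, and commuting versus free pairs when $f\ge3$), using that parabolic subgroups of Dyer systems are again Dyer systems, which can be seen via Theorem \ref{theo:Soergel2024 Theorem2.8}.
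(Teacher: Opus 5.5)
Your proof is correct, but it is organized differently from the paper's.

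\textbf{How the paper argues.} The paper never isolates the fact that $\widetilde{\phi}$ preserves reducedness.
\begin{itemize}
\item In the two edge cases, it takes for each $i$ a chain of $M'$-operations of both types, obtained from Corollary \ref{cor:reduced syllabic word}(2). The chain runs from $\widetilde{\phi}(\sigma_M(w_i))$ to the ShortLex normal form $\sigma_{M'}(w')$. A first-occurrence argument shows that no move on the modified edge appears, since the preimage would contain $[s_i,s_j]_{m+1}$. The paper then pulls the whole chain back through $\widetilde{\phi}^{-1}$, which is available because $\widetilde{\phi}$ is bijective when $f=f'$. This gives $\widetilde{\phi}^{-1}(\sigma_{M'}(w'))\in R_M(w_1)\cap R_M(w_2)$.
\item The vertex-weight case is treated separately. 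There the paper asserts that these chains can be taken to consist of type II moves only. Its stated reason is merely that consecutive syllables of $\widetilde{\phi}(\sigma_M(w_i))$ have different types. That alone does not prevent a later type II move from bringing two same-type syllables together. What is actually needed is that $\widetilde{\phi}(\sigma_M(w_i))$ is reduced in $D'$, and that is exactly your Step 1.
\end{itemize}

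\textbf{What your route buys.} You first prove that reducedness is preserved, by lifting type II moves up to the first type I move, which is necessarily contradictory. You then join the two images directly by Theorem \ref{theo:solution to the word problem of Dyer system}(b) and lift that type II chain. This handles all three cases uniformly and supplies the missing justification in the vertex case. The core mechanism is the same as the paper's: non-reducedness of $[s_i,s_j]_{m+1}$ in $D$, and straightforward lifting of all other moves.

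\textbf{Minor points.}
\begin{itemize}
\item The remark ``$a+b\equiv 0 \pmod{f'}$, hence modulo $f$'' is not a valid implication in general. It holds here only because $|a|,|b|\le f/2$ gives $|a+b|\le f<f'$, which forces $a+b=0$. You do not need it anyway: a type I move on $\widetilde{\phi}(\boldsymbol{v})$ with $\boldsymbol{v}$ reduced in $D$ is already impossible, since a reduced syllabic word has no two adjacent syllables of the same type.
\item The chosen representatives satisfy $|k|\le f(v_i)/2$, not a strict inequality.
\item The appeal to parabolic subgroups at the end is unnecessary. Subwords of reduced words are reduced, and $[s_i,s_j]_{m+1}$ shortens directly using the defining relation.
\end{itemize}
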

\begin{proof}
Fix $w_1, w_2\in D$ with $w'=\eta(w_1)=\eta(w_2)\in D'$.
We show that $\sigma_M(w_1)\stackrel{M}{\leadsto}\sigma_M(w_2)$.
Once this is established,
it follows from Corollary \ref{cor:reduced syllabic word} (1) that $\sigma_M(w_2)\in R_M(w_1)$,
and hence $w_2=\pi_{\mathcal{S}}(\sigma_M(w_2))=w_1$.
We therefore proceed to prove that $\sigma_M(w_1)\stackrel{M}{\leadsto}\sigma_M(w_2)$.
Since $(\widetilde{\phi}\circ \sigma_M)(w_i)\in \pi_{\mathcal{S}'}^{-1}(w')$ for $i=1, 2$, and $\sigma_{M'}(w')\in R_{M'}(w')$,
it follows from Corollary \ref{cor:reduced syllabic word} (2) that $(\widetilde{\phi}\circ \sigma_M)(w_i)\stackrel{M'}{\leadsto}\sigma_{M'}(w')$ for $i=1,2$.

\smallskip
First, assume that $\Gamma'$ is obtained from $\Gamma$ by increasing the vertex weight at $v$.
As $\sigma_M(w_i)$ is reduced for $(D, S)$,
Theorem \ref{theo:solution to the word problem of Dyer system} (1) implies that $\sigma_M(w_i)$ is $M$-reduced.
In particular,
any two consecutive syllables of $\sigma_M(w_i)$ are of different types.
Thus,
any two consecutive syllables of $(\widetilde{\phi}\circ \sigma_M)(w_i)$ are of different types since the monomorphism $\widetilde{\phi}$ maps syllables on $S$ of different types to syllables on $S'$ of different types.
It follows that $(\widetilde{\phi}\circ \sigma_M)(w_i)\stackrel{M'^{(2)}}{\leadsto}\sigma_{M'}(w')$ for $i=1,2$.
Any elementary $M'$-operations of type II is invertible,
and hence we obtain that $(\widetilde{\phi}\circ \sigma_M)(w_1)\stackrel{M'^{(2)}}{\leadsto}(\widetilde{\phi}\circ \sigma_M)(w_2)$.
By the assumption that the edge weights $m=m'$,
we see that $\boldsymbol{v}\stackrel{M^{(2)}}{\leadsto}\boldsymbol{v}'$ if and only if $\widetilde{\phi}(\boldsymbol{v})\stackrel{M'^{(2)}}{\leadsto}\widetilde{\phi}(\boldsymbol{v}')$ for any $\boldsymbol{v}, \boldsymbol{v}\in \mathcal{S}^\ast$,
and hence $\sigma_M(w_1)\stackrel{M'^{(2)}}{\leadsto} \sigma_M(w_2)$.

\smallskip
Next, assume that $\Gamma'$ is obtained from $\Gamma$ by increasing the edge weight at $\{v_i, v_j\}$ or adding the edge $\{v_i, v_j\}$.
It follows from the vertex weights $f=f'$ that $\widetilde{\phi}:\mathcal{S}^\ast\to \mathcal{S}'^\ast$ is a bijection.
Suppose that we go from $(\widetilde{\phi}\circ \sigma_M)(w_i)$ to $\sigma_{M'}(w')$ by the following sequence of elementary $M'$-operations.
\[
(\widetilde{\phi}\circ\sigma_M)(w_i)=\boldsymbol{w}_0\xrightarrow{\nu_0}\boldsymbol{w}_1\xrightarrow{\nu_1} \dots \xrightarrow{\nu_{r-2}}\boldsymbol{w}_{r-1}\xrightarrow{\nu_{r-1}}\boldsymbol{w}_r=\sigma_{M'}(w'),
\]
where $\nu_t=M'^{(1)}$ or $M'^{(2)}$ for $0\leq t\leq r-1$.
We shall see that there exists no index $t$ such that $\nu_t=M'^{(2)}$ and
\[
\boldsymbol{w}_{t-1}=\boldsymbol{u}_1\cdot [s'_i, s'_j]_{m'(\{v_i, v_j\})}\cdot \boldsymbol{u}_2,\quad
\boldsymbol{w}_{t}=\boldsymbol{u}_1\cdot [s'_j, s'_i]_{m'(\{v_i, v_j\})}\cdot \boldsymbol{u}_2.
\]
We argue by contradiction.
Set $\boldsymbol{v}_i=\widetilde{\phi}^{-1}(\boldsymbol{w}_i)$ for $0\leq i\leq r$.
Let $t_0$ be the minimum of the indices of such $\nu_i$'s.
Since the elementary $M'$-operations which do not concern $m'(\{v_i, v_j\})$ are the same as the elementary $M$-operations,
we obtain that 
\[
\sigma_M(w_i)=\boldsymbol{v}_0\xrightarrow{M}\boldsymbol{v}_1\xrightarrow{M} \dots \xrightarrow{M}\boldsymbol{v}_{t_0}.
\]
It follows from Corollary \ref{cor:reduced syllabic word} (1) that $\boldsymbol{v}_{t_0}\in R_M(w_i)$.
Since $m(\{v_i, v_j\})<m'(\{v_i, v_j\})$, 
the syllabic word $\boldsymbol{w}_{t_0}=\boldsymbol{u}_1\cdot [s'_i, s'_j]_{m'(\{v_i, v_j\})}\cdot \boldsymbol{u}_2$ can be rewritten as 
\[
\boldsymbol{w}_{t_0}=\boldsymbol{u}_1\cdot [s'_i, s'_j]_{m(\{v_i, v_j\})+1}\cdot \boldsymbol{u}_2',
\]
where $[s'_i, s'_j]_{m'(\{v_i, v_j\})}\cdot \boldsymbol{u}_2=[s'_i, s'_j]_{m(\{v_i, v_j\})+1}\cdot \boldsymbol{u}_2'$.
Therefore,
\begin{align*}
\boldsymbol{v}_{t_0}
&=\widetilde{\phi}^{-1}(\boldsymbol{w}_{t_0})\\
&=\widetilde{\phi}^{-1}(\boldsymbol{u}_1)\cdot [s_i, s_j]_{m(\{v_i, v_j\})+1}\cdot \widetilde{\phi}^{-1}(\boldsymbol{u}_2')\\
&=\widetilde{\phi}^{-1}(\boldsymbol{u}_1)\cdot s_i[s_j, s_i]_{m(\{v_i, v_j\})}\cdot \widetilde{\phi}^{-1}(\boldsymbol{u}_2')\\
&\xrightarrow{M^{(2)}}\widetilde{\phi}^{-1}(\boldsymbol{u}_1)\cdot s_i[s_i, s_j]_{m(\{v_i, v_j\})}\cdot \widetilde{\phi}^{-1}(\boldsymbol{u}_2')\\
&\xrightarrow{M^{(1)}}\widetilde{\phi}^{-1}(\boldsymbol{u}_1)\cdot [s_j, s_i]_{m(\{v_i, v_j\})-1}\cdot \widetilde{\phi}^{-1}(\boldsymbol{u}_2').
\end{align*}
This contradicts the fact that $\boldsymbol{v}_{t_0}$ is a reduced syllabic word.
Hence,
we see that there are no elementary $M'$-operations concerning with $m'(\{v_i, v_j\})$ in the sequence 
\[
(\widetilde{\phi}\circ\sigma_M)(w_i)=\boldsymbol{w}_0\xrightarrow{\nu_0}\boldsymbol{w}_1\xrightarrow{\nu_1} \dots \xrightarrow{\nu_{r-2}}\boldsymbol{w}_{r-1}\xrightarrow{\nu_{r-1}}\boldsymbol{w}_r=\sigma_{M'}(w').
\]
It follows that  $\sigma_M(w_i)\stackrel{M}{\leadsto}\widetilde{\phi}^{-1}(\sigma_{M'}(w'))$ for $i=1,2$.
It follows from Corollary \ref{cor:reduced syllabic word} (1) that $\widetilde{\phi}^{-1}(\sigma_{M'}(w'))\in R_M(w_1)\cap R_M(w_2)$,
and hence $w_1=w_2$.
\end{proof}

Given a word $\boldsymbol{w}$ in the generators $S$,
we write $\chi(\boldsymbol{w})\in \mathcal{S}^\ast$ for the syllabic word obtained from $\boldsymbol{w}$ by compressing consecutive identical generators.
Note that we do not assume that the exponents of each syllable in $\chi(\boldsymbol{w})$ are just the sum of the exponents of corresponding generators in $\boldsymbol{w}$.
For example,
\[
\chi(s_1,s_1,s_1,s_2,s_2^{-1},s_2^{-1},s_3^{-1},s_3^{-1})=(s_1^3, s_2^{-1}, s_3^{-2}).
\]
We define the exponent sum $\exp(\boldsymbol{w})$ of a syllabic word $\boldsymbol{w}=(s_{i_1}^{k_1},\dots,s_{i_m}^{k_m})\in \mathcal{S}^\ast$ by 
\[
\exp(\boldsymbol{w})=\abs{k_1}+\dots+\abs{k_m},
\]
where $k_p$ is chosen so that $\abs{s_{i_p}^{k_p}}_S=\abs{k_p}$.
It follows from the triangle inequality that $\exp(\boldsymbol{w'})\leq \exp(\boldsymbol{w})$ whenever $\boldsymbol{w}\stackrel{M}{\leadsto}\boldsymbol{w}'$.
For a syllabic word $\boldsymbol{w}=(s_{i_1}^{k_1},\dots,s_{i_n}^{k_n})\in \mathcal{S}^\ast$,
by decomposing each syllable $s_{i_p}^{k_p}$ into a product of generator $s_{i_p}$,
we obtain $\abs{\pi_{\mathcal{S}}(\boldsymbol{w})}_S\leq \exp(\boldsymbol{w})$.
For example,
\[
\abs{\pi_\mathcal{S}(s_1^3s_2s_3^{-2})}_S=\abs{s_1s_1s_1s_2s_3^{-1}s_3^{-1}}_S\leq 3+1+2=\exp(s_1^3s_2s_3^{-2}).
\]
\begin{lem}\label{lem:syllabic reduced to word length}
Let $\boldsymbol{w}\in S^\ast$ be a reduced word.
Then $\exp(\chi(\boldsymbol{w}))=\abs{\pi_{(D, S)}(\boldsymbol{w})}$.
In particular,
$\eta(B_{(D, S)}(R))\subset B_{(D', S')}(R)$ for any $R\in \N$.
\end{lem}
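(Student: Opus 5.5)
We prove the equality $\exp(\chi(\boldsymbol{w}))=\abs{\pi_{(D,S)}(\boldsymbol{w})}_S$ by squeezing it between two inequalities, and then deduce the inclusion for $\eta$ from it.

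Write $w=\pi_{(D,S)}(\boldsymbol{w})$ and $\ell=\abs{w}_S$, so that $\boldsymbol{w}$ has exactly $\ell$ letters. The plan has three steps.

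\emph{Step 1: $\exp(\chi(\boldsymbol{w}))\le \ell$.} In a reduced word no letter is immediately followed by its inverse; otherwise a free cancellation would shorten it. So a maximal block of identical consecutive letters of type $s_i$ is a power $s_i^{\pm c}$, and $\chi$ turns it into a single syllable. The exponent $k_p$ entering $\exp$ is chosen with $\abs{s_i^{k_p}}_S=\abs{k_p}$. Hence $\abs{k_p}=\abs{s_i^{c}}_S\le c$. Summing over the blocks gives $\exp(\chi(\boldsymbol{w}))\le \ell$.

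\emph{Step 2: $\ell\le \exp(\chi(\boldsymbol{w}))$.} Since $\pi_{\mathcal{S}}(\chi(\boldsymbol{w}))=w$, the remark preceding the lemma gives $\abs{w}_S\le \exp(\chi(\boldsymbol{w}))$. Together with Step 1 this proves the equality. As a by-product each block length $c$ equals $\abs{s_i^c}_S$, which holds automatically since the word is reduced.

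\emph{Step 3: the inclusion $\eta(B_{(D,S)}(R))\subset B_{(D',S')}(R)$.} This is the main obstacle, because $\eta$ is built from the syllabic normal form $\sigma_M(w)$, not from an $S$-geodesic. Take $w$ with $\abs{w}_S\le R$ and a reduced $S$-word $\boldsymbol{w}$ for $w$.

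First we need $\chi(\boldsymbol{w})$ to be reduced for $(D,S)$. If it were not, Theorem \ref{theo:solution to the word problem of Dyer system} would supply $M$-operations lowering its degree. Those operations never increase $\exp$, and type I merges strictly reduce it or the degree. So we would obtain a syllabic word for $w$ with smaller $\exp$, hence an $S$-word for $w$ shorter than $\ell$, contradicting Step 1.

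By Corollary \ref{cor:reduced syllabic word}(2), $\chi(\boldsymbol{w})\stackrel{M}{\leadsto}\sigma_M(w)$. Since $\exp$ is non-increasing along $M$-operations, $\exp(\sigma_M(w))\le \exp(\chi(\boldsymbol{w}))=\ell$.

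The map $\widetilde{\phi}$ sends each syllable $s_i^k$ to $\phi(s_i)^k$. Because $f(v)\le f'(\phi(v))$, the chosen representatives give $\abs{\phi(s_i)^k}_{S'}\le\abs{k}$. Therefore $\exp(\widetilde{\phi}(\sigma_M(w)))\le \exp(\sigma_M(w))$.

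Finally, applying the remark preceding the lemma in $D'$,
\[
\abs{\eta(w)}_{S'}\le \exp(\widetilde{\phi}(\sigma_M(w)))\le \ell\le R .
\]

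The delicate points to check carefully are two. One is the comparison of exponent conventions under $\widetilde{\phi}$ when $f(v)<f'(\phi(v))$. The other is that $\exp$ is monotone under type I merges, which follows from the triangle inequality noted just before the lemma.
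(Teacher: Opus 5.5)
Your argument is, in substance, the paper's. Steps 1--2 are a slightly more careful version of the paper's one-line justification that $\exp(\chi(\boldsymbol{w}))=\abs{w}_S$. In Step 3 you use the same chain as the paper to get $\abs{\eta(w)}_{S'}\le R$:
$\chi(\boldsymbol{w})\stackrel{M}{\leadsto}\sigma_M(w)$; $\exp$ is non-increasing along $M$-operations; $\widetilde{\phi}$ does not increase $\exp$; and $\abs{\pi_{\mathcal{S}'}(\boldsymbol{u})}_{S'}\le\exp(\boldsymbol{u})$.

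There is, however, one false claim in Step 3: that $\chi(\boldsymbol{w})$ must be reduced for $(D,S)$. Take $D=\Z^2=\langle a,b\mid ab=ba\rangle$, with both vertex weights $\infty$ and no edge. The word $(a,b,a)$ is geodesic, since $\abs{a^2b}_S=3$. Then $\chi(a,b,a)=(a,b,a)$ has degree $3$, whereas $a^2b=\pi_{\mathcal{S}}(a^2,b)$ has syllabic length $2$.

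Your justification breaks at ``so we would obtain a syllabic word for $w$ with smaller $\exp$''. Lowering the degree need not lower $\exp$: the sequence $(a,b,a)\to(a,a,b)\to(a^2,b)$ keeps $\exp=3$.

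Fortunately the claim is not needed anywhere. Corollary \ref{cor:reduced syllabic word}(2) only requires the target $\sigma_M(w)$ to be reduced, which holds by construction. So $\chi(\boldsymbol{w})\stackrel{M}{\leadsto}\sigma_M(w)$ regardless. Delete that paragraph and the proof is correct and matches the paper's.

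A minor point: the bound $\abs{\phi(s_i)^k}_{S'}\le\abs{k}$ holds trivially, without using $f(v)\le f'(\phi(v))$. That hypothesis is what underlies the paper's sharper equality $\exp(\widetilde{\phi}(\sigma_M(w)))=\exp(\sigma_M(w))$, which you do not need.
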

\begin{proof}
Write $\boldsymbol{w}=(s_{i_1},\dots,s_{i_m})$ and $\chi(\boldsymbol{w})=(s_{j_1}^{k_1},\dots,s_{j_n}^{k_n})$.
Since $\boldsymbol{w}$ is reduced,
there are no consecutive identical generators in $\boldsymbol{w}$ of the form $ss^{-1}$ and each exponent $k_p$ of $\chi(\boldsymbol{w})$ is chosen so that $\abs{s_{j_p}^{k_p}}_S=\abs{k_p}$.
Therefore,
we obtain 
\[
\abs{\pi_{(D, S)}(\boldsymbol{w})}_S=m=\abs{k_1}+\cdots+\abs{k_n}=\exp(\chi(\boldsymbol{w})).
\]
Fix $w\in B_{(D, S)}(R)$ arbitrarily and let $\boldsymbol{w}$ be a reduced expression of $w$.
It follows from Corollary \ref{cor:reduced syllabic word} (2) that $\chi(\boldsymbol{w})\stackrel{M}{\leadsto}\sigma_M(w)$,
and hence
\[
\abs{\eta(w)}_{S'}
\leq \exp((\widetilde{\phi}\circ \sigma_M)(w))
=\exp(\sigma_M(w))
\leq \exp(\chi(\boldsymbol{w}))=\abs{w}_S=R.
\]
Therefore,
$\eta(B_{(D, S)}(R))\subset B_{(D', S')}(R)$.
\end{proof}

From Lemmas \ref{lem:syllabic inclusion} and \ref{lem:syllabic reduced to word length},
we proceed with the proof of Theorem \ref{theo:monotonicity of growth} as follows.

\begin{proof}[Proof of Theorem \ref{theo:monotonicity of growth}]
We define the Dyer graph $\Gamma''$ as follows.
Its underlying simple graph is the full subgraph of $\Gamma'$ induced by $\phi(V(\Gamma))\subset V(\Gamma')$.
The vertex weight and the edge weight are the restrictions of $f'$ and $m'$ onto the subgraph, respectively.
We write $(D'', S'')$ for the Dyer system associated with $\Gamma''$.

\smallskip
The restriction of the word metric $\abs{\,\cdot\,}_{S'}$ onto $D''$ equals to the word metric $\abs{\,\cdot\,}_{S''}$ on $D''$ by \cite[Lemma 2.5]{ParisSoergel2023}.
It follows that $B_{(D'', S'')}(m)\subset B_{(D', S')}(m)$ for any $m\in \N$.

\smallskip
Since the cardinalities of $V(\Gamma)$ and $V(\Gamma'')$ are the same and $(D, S)\preceq (D'', S'')$,
we can take a sequence $\Gamma_0,\dots,\Gamma_m$ of Dyer graphs such that $\Gamma_0=\Gamma, \Gamma_m=\Gamma''$, and $\Gamma_k$ is obtained from $\Gamma_{k-1}$ by increasing the vertex weight at $v$ or the edge weight at $\{v_i, v_j\}$, or adding the edge $\{v_i, v_j\}$ with weight $m'(\{v_i, v_j\})$.
We denote the Dyer system associated with $\Gamma_k$ by $(D_k, S_k)$.
By Lemma \ref{lem:syllabic inclusion} and \ref{lem:syllabic reduced to word length},
we see that there exists an injective map $\eta_k:B_{(D_{k-1}, S_{k-1})}(m)\to B_{(D_k, S_k)}(m)$ for $m\in \N$,
and hence $a_{(D_{k-1}, S_{k-1})}(m)\leq a_{(D_k, S_k)}(m)$.
This completes the proof.
\end{proof}

%%%%%%%%%%%%%%%%%%%%%%%%%%%%%%%%%%%%%%%%%%%
%%%%%%%%%%%%%%%%%%%%%%%%%%%%%%%%%%%%%%%%%%%
\section{Continuity of the growth rate on $\mathcal{D}_n$}\label{section:6}

\begin{lem}\label{lem:growth of spherical or Euclidean Dyer system}
Let $(D, S)$ be a Dyer system.
Then $\tau(D, S)=1$ if and only if $(D, S)$ is spherical or Euclidean.
\end{lem}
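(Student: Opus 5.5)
We prove both directions separately, reducing to known facts about Coxeter systems and to the monotonicity result, Theorem \ref{theo:monotonicity of growth}.

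For the ``if'' direction, suppose $(D,S)$ is spherical or Euclidean. As noted after the definition, $D=D_2\times D_p\times D_\infty$ with $D_p$ finite and $D_\infty\cong\Z^{n_\infty}$. Because the vertices of $\Gamma_{p,\infty}$ carry no edges, the generators in $S_p\sqcup S_\infty$ commute with every other generator. So $(D,S)$, as a marked group, is the direct product of $(D_2,S_2)$, the cyclic groups $(C_{f(v)},\{s_v\})$ for $v\in V_p$, and copies of $(C_\infty,\{s_v\})$ for $v\in V_\infty$. The growth series is therefore multiplicative, and
\[
\tau(D,S)=\max\{\tau(D_2,S_2),1\}.
\]
Here the factors from $V_p$ and $V_\infty$ contribute growth rate $1$: the former are polynomials, and the latter equal $\frac{1+z}{1-z}$ by Example \ref{exa:growth series of cyclic groups}. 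Finally, a spherical Coxeter group is finite, and a Euclidean Coxeter group is virtually abelian and hence of polynomial growth. Thus $\tau(D_2,S_2)=1$, so $\tau(D,S)=1$.

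For the ``only if'' direction, suppose $(D,S)$ is neither spherical nor Euclidean. We exhibit a sub-Dyer system $(D_0,S_0)\preceq(D,S)$ with $\tau(D_0,S_0)>1$. By Theorem \ref{theo:monotonicity of growth} (the inclusion of a full subgraph is a graph morphism with equal weights), this gives $\tau(D,S)\ge\tau(D_0,S_0)>1$. There are two cases.

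\emph{Case 1: condition (ii) fails}, so $(D_2,S_2)$ is a non-spherical, non-Euclidean Coxeter system. It is classical that such a system has exponential growth; this also follows from Terragni's monotonicity together with the classification. Take $(D_0,S_0)=(D_2,S_2)$, which is the full subgraph $\Gamma_2$.

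\emph{Case 2: condition (i) fails.} Then some $v\in V_p\sqcup V_\infty$ is joined to some vertex $w$. By the definition of a Dyer graph this edge has weight $\infty$, so the full subgraph on $\{v,w\}$ gives
\[
D_0=C_{f(v)}*C_{f(w)},\qquad f(v)\ge3,\ f(w)\ge2.
\]
The growth series of a free product satisfies $1/f_{A*B}=1/f_A+1/f_B-1$. So we need $1/f_{C_p}(z)+1/f_{C_q}(z)-1$ to vanish at some $z\in(0,1)$. At $z\to0$ this expression tends to $1$. At $z=1/2$ we have $f_{C_p}(1/2)\ge 1+2\cdot\frac12=2$ for $p\ge3$, and $f_{C_q}(1/2)\ge\frac32$ for $q\ge 2$. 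Hence the expression is at most $\frac12+\frac23-1<\frac12$. This bound alone is not yet conclusive, so instead we use the cruder observation that $C_3*C_2$ (or more generally $C_p*C_q$ with $p\ge3$) contains a free subsemigroup on two generators of bounded word length. Since $C_p*C_q$ is then nonelementary, it has exponential growth. Alternatively, by monotonicity we may reduce to $C_3*C_2$ and compute its growth series directly, obtaining a root of the denominator strictly inside the unit disk.

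The main obstacle is the classical input in Case 1: a Coxeter system is of subexponential growth only when it is spherical or Euclidean. We cite this, for instance via de la Harpe or Davis, rather than reprove it. It can also be organized through minimal non-spherical, non-Euclidean subgraphs and monotonicity, since any non-affine infinite irreducible Coxeter system contains a hyperbolic (Lannér-type) subsystem, which has exponential growth.
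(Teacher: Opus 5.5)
Your proof is correct and follows the paper's argument. The ``if'' direction uses the same direct-product decomposition, and the converse uses the same two cases: an edge at a vertex of weight $\ge 3$, which yields $C_{f(v)}*C_{f(w)}$, or a non-spherical, non-Euclidean Coxeter part, handled via de la Harpe. The only difference is that you invoke Theorem~\ref{theo:monotonicity of growth} on full subgraphs where the paper implicitly passes from a parabolic subgroup of exponential growth to $D$ itself. Your closing aside is inaccurate: the tree $E_{10}=T_{2,3,7}$ with all weights $3$ dominates no Lann\'er diagram. Nothing in your argument depends on that aside.
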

\begin{proof}
Suppose that $(D, S)$ is spherical or Euclidean.
Since $(D, S)=(D_2, S_2)\times (D_p, S_p)\times (D_\infty, S_\infty)$, we obtain that $\tau(D, S)=\max{\{\tau(D_2, S_2), \tau(D_p, S_p), \tau(D_\infty, S_\infty)\}}$.
\begin{itemize}
\item
The growth rate $\tau(D_2, S_2)=1$ because the Coxeter system $(D_2, S_2)$ is spherical or Euclidean.
\item
The growth rate $\tau(D_p, S_p)=1$ because $D_p$ is finite.
\item
The growth rate $\tau(D_\infty, S_\infty)=1$ because $D_\infty$ is isomorphic to a free abelian group of finite rank.
\end{itemize}
Therefore $\tau(D, S)=1$.

\smallskip
Conversely, suppose that $(D, S)$ is non-spherical and non-Euclidean.
\begin{itemize}
\item[(i)]
For the case that there exists a vertex $v$ of $\Gamma_{p, \infty}$ having at least one edge.
Let $\{v, w\}$ be the edge of $\Gamma$.
Since the vertex weight $f(v)\ge 3$,
the edge weight $m(\{v, w\})=\infty$.
Consider the subgroup $H=\langle v, w\rangle$ of $D$ with the marking $(v, w)$.
The subgroup $H$ is isomorphic to the free product $C_{f(v)}\ast C_{f(w)}$,
and hence $(H, (v, w))$ has exponential growth.
\item[(ii)]
For the case that $(D_2, S_2)$ is non-spherical and non-Euclidean Coxeter system.
It follows from \cite{Harpe1987} that $D_2$ contains a free subgroup of rank at least $2$,
and hence $(D_2, S_2)$ has exponential growth.
\end{itemize}
Therefore $\tau(G, S)>1$.
\end{proof}

Now we are in a position to see that the growth rate $\tau:\mathcal{D}_n\to \R_{\geq 1}$ is continuous.
For that,
we use the following formula for the growth series of Dyer systems due to Paris and Varghese \cite{ParisVarghese2024}.

\begin{theo}[{\cite[Theorem 1.1]{ParisVarghese2024}}]\label{theo:Paris Varghese formula}
Let $(D, S)$ be a non-spherical Dyer system.
Then
\[
\dfrac{(-1)^{\#S+1}}{f_{(D, S)}(z)}=\sum_{T\subsetneq S} \dfrac{(-1)^{\#T}}{f_{(D_T, T)}(z)}\,.
\]
\end{theo}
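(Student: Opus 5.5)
The plan is to adapt the classical proof of Steinberg's formula, with Theorem \ref{theo:solution to the word problem of Dyer system} replacing the Coxeter exchange condition. Throughout, $\abs{\,\cdot\,}$ is the word length with respect to $S$, computed as $\exp$ of a reduced syllabic word as in Lemma \ref{lem:syllabic reduced to word length}.

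\textbf{Step 1: coset decomposition.} For each $T\subseteq S$, I would first establish the following decomposition. Every $w\in D$ factors uniquely as $w=uv$ with $v\in D_T$ and $u$ the unique element of minimal length in $wD_T$, and moreover
\[
\abs{w}=\abs{u}+\abs{v}.
\]
My plan for this:
\begin{itemize}
\item Take a reduced syllabic word for $w$.
\item Using type II operations, push as many syllables of types in $T$ as possible to the right end.
\item Use Corollary \ref{cor:reduced syllabic word} to see that the resulting prefix is independent of choices up to type II moves, and that it represents a minimal coset element.
\item For uniqueness of $u$, argue as in the Coxeter case: if two minimal elements lay in the same coset, concatenating reduced words and applying Theorem \ref{theo:solution to the word problem of Dyer system}(a) would produce a shorter word.
\end{itemize}
Additivity of lengths then follows because $\exp$ is additive under concatenation of reduced syllabic words. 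I will also use the result quoted in the proof of Theorem \ref{theo:monotonicity of growth}, namely \cite[Lemma 2.5]{ParisSoergel2023}, that $D_T$ with word metric $\abs{\,\cdot\,}_T$ is isometrically embedded. Setting $D^T$ for the set of minimal representatives and $f^T(z)=\sum_{u\in D^T}z^{\abs{u}}$, this gives
\[
f_{(D,S)}=f^T\,f_{(D_T,T)}.
\]

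\textbf{Step 2: alternating sum.} For $w\in D$, let $K(w)\subseteq S$ be the set of generators $s$ such that some reduced syllabic word for $w$ ends in a syllable of type $s$. By Step 1, $w\in D^T$ exactly when $T\cap K(w)=\emptyset$. Hence
\[
\sum_{T\subseteq S}(-1)^{\#T}f^T(z)=\sum_{w\in D}z^{\abs{w}}\sum_{T\subseteq S\setminus K(w)}(-1)^{\#T}.
\]
The inner sum vanishes unless $K(w)=S$.

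\textbf{Step 3: the key lemma.} I will show that if $K(w)=S$ then $(D,S)$ is spherical in the sense of Section \ref{section:4}. It suffices to find, for a non-spherical system, a fixed generator that can never be a final type. Two cases arise:
\begin{itemize}
\item If some vertex $v$ of $\Gamma_{p,\infty}$ has an edge $\{v,w\}$, then $m(\{v,w\})=\infty$. A reduced word ending in a syllable of type $v$ and another ending in type $w$ cannot be connected by type II moves, because any type II move swapping a $v$-syllable past a $w$-syllable would need a finite $m$.
\item If $(D_2,S_2)$ is not spherical, the classical Coxeter argument applies inside the isometrically embedded $D_2$. Elements having all of $S_2$ as right descents exist only when $D_2$ is finite.
\end{itemize}
Making these descent arguments rigorous for syllabic words, in particular tracking how type II moves can bring a syllable of a given type to the end, is the step I expect to be the main obstacle. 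For it I would use Theorem \ref{theo:solution to the word problem of Dyer system}(b) to reduce to analysing the final braid relations applied.

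\textbf{Step 4: conclusion.} For non-spherical $(D,S)$, Step 3 gives $\sum_{T}(-1)^{\#T}f^T=0$. Dividing by $f_{(D,S)}$ and using $f^T=f_{(D,S)}/f_{(D_T,T)}$ from Step 1 yields
\[
\sum_{T\subseteq S}\frac{(-1)^{\#T}}{f_{(D_T,T)}(z)}=0.
\]
Isolating the term $T=S$ then gives the stated identity.
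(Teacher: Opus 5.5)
The paper gives no argument for this statement; it quotes it from Paris--Varghese, so your Steinberg-type proof has to stand on its own. The outer shell is right: the inclusion--exclusion in Step 2 and the division in Step 4 are correct, and reducing to "no $w$ has $K(w)=S$ when $(D,S)$ is non-spherical" is the correct target. The genuine gap is Step 1, which carries all of the Dyer-specific content, and the justifications you give for it do not work.

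Corollary \ref{cor:reduced syllabic word} and Theorem \ref{theo:solution to the word problem of Dyer system}(b) only say that two reduced syllabic words for the same element of $D$ are joined by type II moves. Those moves may straddle the junction between prefix and suffix. So nothing follows about the prefixes of two pushed words $\boldsymbol{u}\cdot\boldsymbol{v}$ and $\boldsymbol{u}'\cdot\boldsymbol{v}'$ representing the same element.

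Likewise, for uniqueness: if $u_1=u_2h$ with $h\in D_T\setminus\{1\}$, the observation that $\boldsymbol{u}_2\cdot\boldsymbol{h}$ is not reduced is not a contradiction by itself. What is missing is the following statement. If $\boldsymbol{u}$ is reduced, no reduced word for $\pi_{\mathcal{S}}(\boldsymbol{u})$ ends in a syllable of type in $T$, and $\boldsymbol{h}$ is a reduced word in $T$-syllables, then $\boldsymbol{u}\cdot\boldsymbol{h}$ is reduced. Your remark that additivity "follows because $\exp$ is additive under concatenation of reduced syllabic words" presupposes exactly this.

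Without that statement:
\begin{itemize}
\item $D^T$ is not known to be a transversal of the cosets $wD_T$;
\item the factorization $f_{(D,S)}=f^T f_{(D_T,T)}$ is unjustified;
\item the implication $T\cap K(w)=\emptyset\Rightarrow w\in D^T$ used in Step 2 is unproved.
\end{itemize}
For Coxeter groups this is supplied by the exchange/deletion condition. For syllabic words you must either prove an analogue or import a minimal-coset-representative theorem for standard parabolic subgroups of Dyer groups from the literature. If you prove it, note that $\abs{wt}_{\mathcal{S}}=\abs{w}_{\mathcal{S}}$ can now occur for syllables of order $\ge 3$. So the Coxeter shortcut, where $\ell(ws)<\ell(w)$ immediately yields a reduced word ending in $s$, is not available.

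By contrast, Step 3 is not the real obstacle once Step 1 is in hand. Additivity gives $K(uv)\cap T=K(v)$ for $u\in D^T$ and $v\in D_T$. You need this anyway in case (ii), because $w$ need not lie in $D_2$, so the Coxeter descent argument cannot be applied to it directly. Taking $T=S_2$ then reduces case (ii) to the Coxeter fact that only finite groups have elements with all generators as right descents. Taking $T=\{a,b\}$ for an edge with $f(a)\ge 3$ reduces case (i) to $C_{f(a)}\ast C_{f(b)}$, where a nontrivial element has a unique reduced syllabic word.

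If you keep your direct argument for case (i), be careful when $f(b)=2$. An odd braid move $[b,x]_m\to[x,b]_m$ changes the number of $b$-syllables, so the subsequence of $\{a,b\}$-syllables is not invariant. What does survive all type II moves is that subsequence of types with consecutive repetitions collapsed. Its last letter distinguishes a reduced word ending in type $a$ from one ending in type $b$.
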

Note that if $(D, S)$ is spherical or Euclidean,
then
\[
f_{(D, S)}(z)=f_{(D_2, S_2)}(z)\cdot f_{(D_p, S_p)}(z)\cdot f_{(D_\infty, S_\infty)}(z).
\]
Set a rational function $F_{(D, S)}(z)=\dfrac{1}{f_{(D, S)}(z)}$.
\begin{lem}\label{lem:convergence of growth series}
Let $\{(D_k, S)\}_{k\ge 1}$ be a convergent sequence of $n$-marked Dyer systems,
and let $(D, S)$ be its limit.

(1)
The rational function $F_{(D_k, S)}(z)$ converges normally to $F_{(D, S)}(z)$ on the unit open disk $\{\abs{z}<1\}$ as $k\to \infty$.

(2)
The set of poles of $F_{(D_k, S)}(z)$ and $F_{(D, S)}(z)$ contained in the open unit disk is locally finite.
\end{lem}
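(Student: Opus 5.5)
We argue by induction on $n=\#S$, using the Paris--Varghese formula (Theorem \ref{theo:Paris Varghese formula}) to reduce to proper parabolic subsystems. By Corollary \ref{cor:convergence in Dyer graph}, for $k$ large the Dyer graphs $\Gamma_k$ and $\Gamma$ have the same underlying simple graph. The vertex and edge weights converge in $\widehat{\N}$, so each weight is either eventually constant or tends to $\infty$. For every $T\subset S$ the full subgraph on $T$ then satisfies the same convergence, so $(D_{k,T},T)\to(D_T,T)$ in $\mathcal{D}_{\#T}$. By Lemma \ref{lem:subspace of spherical or Euclidean Dyer systems}, after discarding finitely many $k$, $(D_k,S)$ is spherical or Euclidean if and only if $(D,S)$ is. In particular, $(D_k,S)$ is spherical exactly when $(D,S)$ is: spherical here means $D_\infty$ trivial and $D_2$ spherical, and both conditions are read off from the graph and from the clopen spherical locus in $\mathcal{C}_{n_2}$.

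\emph{Base and spherical/Euclidean case.} In this case $F_{(D_k,S)}=F_{(D_{k,2},S_2)}F_{(D_{k,p},S_p)}F_{(D_{k,\infty},S_\infty)}$, and it suffices to treat each factor.
\begin{itemize}
\item $D_\infty$ is free abelian with the vertex set fixed, so $F=((1-z)/(1+z))^{n_\infty}$ and this factor is constant in $k$.
\item $D_p$ is a product of cyclic groups $C_{f_k(v)}$. From Example \ref{exa:growth series of cyclic groups}, $1/f_{C_{f}}$ converges to $(1-z)/(1+z)$ uniformly on compacta of $\{|z|<1\}$ as $f\to\infty$, because the partial sums of $1+2\sum z^j$ converge normally there.
\item $D_2$ is a spherical or Euclidean Coxeter system. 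Its factor is handled by the Coxeter case (Steinberg's formula, as in \cite{Yukita2024}), or by induction on irreducible components, since weights tending to $\infty$ only occur in Euclidean limits such as $\tilde A_1$.
\end{itemize}

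\emph{Non-spherical case (induction).} We assume (1) and (2) for all ranks $<n$. Theorem \ref{theo:Paris Varghese formula} gives
\[
F_{(D_k,S)}(z)=(-1)^{n+1}\sum_{T\subsetneq S}(-1)^{\#T}F_{(D_{k,T},T)}(z).
\]
Each term converges normally on $\{|z|<1\}$ to $F_{(D_T,T)}$ by the inductive hypothesis applied to $(D_{k,T},T)\to(D_T,T)$. A finite sum of normally convergent sequences converges normally, and the limit is $F_{(D,S)}$ by the same formula applied to the (non-spherical) limit. The base case $\#T=0$ gives the constant $1$.

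\emph{Poles.} The main subtlety is that normal convergence must be interpreted as convergence on compacta away from poles, that is, in the spherical metric for meromorphic functions. We avoid this by proving the stronger fact that each $F_{(D,S)}$ is holomorphic on $\{|z|<1\}$. Since $f_{(D,S)}(z)$ is a power series with nonnegative coefficients and constant term $1$, its radius of convergence is $1/\tau\le1$, so this does not follow directly. For the spherical and Euclidean pieces, $F$ is a product of reciprocals of $1+2z+\dots$ or of Coxeter growth polynomials. Coxeter growth polynomials of spherical groups are products of $[d_i]_z=1+z+\dots+z^{d_i-1}$, whose zeros lie on $|z|=1$, and $(1-z)/(1+z)$ is holomorphic. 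Cyclic growth polynomials $1+2z+\dots+2z^m$ also have all their zeros on or outside the unit circle, by an Eneström--Kakeya type argument since the coefficients are monotone. Hence the base pieces are holomorphic in the disk. The alternating sum of holomorphic functions is again holomorphic, so induction shows that every $F$ is holomorphic on $\{|z|<1\}$. Its zeros there are then isolated, and (2) holds trivially: poles of $F$ in the open disk form the empty set, which is locally finite. In the worst case, if holomorphy failed for some piece, we would instead interpret (1) in the spherical metric and derive (2) from the rationality of each $F$ together with Hurwitz's theorem applied to $1/F$. We expect this step, controlling zeros of the spherical growth polynomials inside the disk, to be the main obstacle.
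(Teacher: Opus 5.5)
Your argument for the convergence itself follows the paper: induction on $\#S$, the Paris--Varghese formula when $(D,S)$ is neither spherical nor Euclidean (using the clopen locus), the product splitting otherwise, and partial sums for $C_f\to C_\infty$. The gap is in the treatment of poles. Your ``stronger fact'' that every $F_{(D,S)}$ is holomorphic on $\{\abs{z}<1\}$ is false. For $C_3$ one has $F(z)=1/(1+2z)$, with a pole at $-1/2$. For $C_5$ the polynomial $1+2z+2z^2$ vanishes at $(-1\pm i)/2$, which has modulus $1/\sqrt{2}$. Enestr\"om--Kakeya, applied to the nondecreasing coefficients $1\le 2\le\cdots\le 2$, puts the zeros in $\abs{z}\le 1$, which is the opposite of what you need. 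The Paris--Varghese sum does not cancel these poles either. For $C_3\ast C_3$ (two vertices of weight $3$ joined by an edge of weight $\infty$) one gets $F(z)=2/(1+2z)-1=(1-2z)/(1+2z)$. So (2) is not trivial. Also, (1) cannot be read as normal convergence of holomorphic functions, and your step ``a finite sum of normally convergent sequences converges normally'' has no footing as written.

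What is missing is control of the poles uniformly in $k$, which is what the paper proves by induction. Poles of finite sums and products lie in the union of the pole sets. Spherical and Euclidean Coxeter factors, even-order cyclic factors and infinite cyclic factors have growth series with no zeros in the open disk. Hence every pole of $F_{(D_k,S)}$ in the disk is a zero of some $f_{(C_{f_k(v)},\{r\})}$ with $f_k(v)$ odd. If $f_k(v)$ is eventually constant, this gives finitely many points. If $f_k(v)=2m+1\to\infty$, then $(1-z)f_{(C_{2m+1},\{r\})}(z)=1+z-2z^{m+1}$. A zero $\zeta$ with $\abs{\zeta}<\rho$ would force $1-\rho<\abs{1+\zeta}=2\abs{\zeta}^{m+1}<2\rho^{m+1}$, which is impossible for large $m$. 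So, over all $k$, only finitely many poles lie in each $\{\abs{z}<\rho\}$. Statement (1) then means locally uniform convergence off this set, where sums and products are harmless.

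Your fallback does not replace this argument. Hurwitz applied to $1/F_k$ only shows that poles of $F_k$ eventually cluster near those of $F$, not that they stay fixed. Spherical-metric convergence is also not preserved by the Paris--Varghese sums when poles cancel in the limit. For example, $1/(z-1/k)\to 1/z$ spherically, but $1/(z-1/k)-1/z=\frac{1/k}{z(z-1/k)}$ does not converge to $0$ uniformly near $0$.

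A minor point: your claim that $(D_k,S)$ is spherical exactly when $(D,S)$ is, is false ($C_p\to C_\infty$, $I_2(m)\to\tilde{A}_1$), and $V_\infty$ is not constant along the sequence. Only the spherical-or-Euclidean locus is clopen. Your case split only needs that, so drop the claim and merge your $D_\infty$ bullet into the cyclic one.
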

\begin{proof}
(1)
The proof is by induction on $\#S\geq 1$.
Let $\Gamma_k$ and $\Gamma$ be the Dyer graphs associated with $(D_k, S)$ and $(D, S)$, respectively.
By Corollary \ref{cor:convergence in Dyer graph},
we may assume that $\Gamma_k=\Gamma$ as simple graphs,
and write the Dyer graphs $\Gamma_k$ and $\Gamma$ as $\Gamma_k=(G, f_k, m_k)$ and $\Gamma=(G, f, m)$.

\medskip
Suppose that $\#S=1$.
Then $G$ consists of the unique vertex $v$ and Dyer systems are $D_k=\left\langle s \middle| s^{f_k(v)}=1 \right\rangle$ and $D=\left\langle s \middle| s^{f(v)}=1 \right\rangle$.
It follows that $(D_k, S)\cong (C_{f_k(v)}, \{r\})$ and $(D, S)\cong (C_{f(v)}, \{r\})$,
which are the marked cyclic groups as in Example \ref{exa:growth series of cyclic groups}.
If $f(v)<\infty$,
then $(D_k, S)=(D, S)$ for sufficiently large $k$,
and hence the assertion follows.
For the case that $f(v)=\infty$,
we see that $\dfrac{1}{\,f_{(C_p, \{r\})}(z)\,}$ converges normally to $\dfrac{1}{\,f_{(C_\infty, \{r\})}(z)\,}$ on the unit open disk $\{\abs{z}<1\}$ as $p\to \infty$;
indeed the polynomial $f_{(C_{2m+1}, \{r\})}(z)$ equals the first $m+1$ terms of the power series $f_{(C_\infty, \{r\})}(z)$ which has the radius of convergence $1$,
and hence $f_{(C_{2m+1}, \{r\})}$ converges normally to $f_{(C_\infty, \{r\})}(z)$ on the unit open disk $\{\abs{z}<1\}$.
Similarly,
the polynomial $f_{(C_{2m}, \{r\})}(z)+z^m$ converges normally to $f_{(C_\infty, \{r\})}(z)$ on the unit open disk $\{\abs{z}<1\}$,
and hence so does $f_{(C_{2m}, \{r\})}(z)$.

\medskip
Suppose that $\#S=n$ and the assertion holds if $\#S\leq n-1$.

\smallskip
For the case that $(D, S)$ is not spherical and Euclidean,
by Lemma \ref{lem:subspace of spherical or Euclidean Dyer systems},
we assume that $(D_k, S)$ is not spherical and Euclidean for all sufficiently large $k$.
Then the assertion follows from Theorem \ref{theo:Paris Varghese formula} (1) and the inductive hypothesis.

\smallskip
For the case that $(D, S)$ is spherical or Euclidean, 
by Lemma \ref{lem:subspace of spherical or Euclidean Dyer systems},
we assume that $(D_k, S)$ is spherical or Euclidean for all sufficiently large $k$,
and hence we see
\begin{align}
F_{(D_k, S)}(z)&=\dfrac{1}{f_{((D_k)_2, S_2)}(z)}\cdot \dfrac{1}{f_{((D_k)_{p, \infty}, S_p\sqcup S_\infty)}(z)},\label{eq:sequence spherical}\\
F_{(D, S)}(z)&=\dfrac{1}{f_{(D_2, S_2)}(z)}\cdot \dfrac{1}{f_{(D_{p, \infty}, S_p\sqcup S_\infty)}(z)}.\label{eq:limit spherical}
\end{align}
It follows from Corollary \ref{cor:convergence in Dyer graph} that $((D_k)_2, S_2)\to (D_2, S_2)$ and $((D_k)_{p, \infty}, S_{p, \infty})\to (D_{p, \infty}, S_{p, \infty})$ as $k\to \infty$.
The inductive hypothesis implies that each term of the right-hand side of the equation \eqref{eq:sequence spherical} converges normally to that of \eqref{eq:limit spherical}.
This completes the proof of (1).

\medskip
(2)
Let us denote the set of poles of $F_{(D_k, S)}(z)$ and $F_{(D, S)}(z)$ contained in the open unit disk by $R$.
We show that the intersection $R\cap \{\abs{z}<\rho\}$ is finite for any $0<\rho<1$ by induction on $\#S$.

\smallskip
Suppose that $\#S=1$,
that is,
$(D, S)=(C_{f(v)}, \{r\})$ and $(D_k, S)=(C_{f_k(v)}, \{r\})$.
It it enough to see that the absolute value of any zero of the growth polynomial $f_{(C_p, \{r\})}(z)$ is larger than $\rho$ for all sufficiently large $p$.
For $p=2m$,
the growth polynomial satisfies that $(1-z)f_{(C_{2m}, \{r\})}(z)=(1+z)(1-z^m)$,
and hence all zeros of $f_{(C_{2m}, \{r\})}(z)$ are on the unit circle.
Therefore, $f_{(C_{2m}, \{r\})}(z)$ has no zeros on the open disk $\{\abs{z}<\rho\}$.
For $p=2m+1$,
\begin{equation}
(1-z)f_{(C_{2m+1}, \{r\})}(z)=1+z-2z^{m+1}.\label{eq:zero of odd cyclic}
\end{equation}
Let $\zeta\in \{\abs{z}<\rho\}$ be a zero of $f_{(C_{2m+1}, \{r\})}(z)$.
Since $1+\zeta-2\zeta^{m+1}=0$ by equation \eqref{eq:zero of odd cyclic},
together with the inequality $\abs{1+z}>1-\rho$ for $\abs{z}<\rho$,
we have that $1-\rho<2\rho^{m+1}$.
It follows that $f_{(C_{2m+1}, \{r\})}(z)$ has no zeros on the open disk $\{\abs{z}<\rho\}$ for all sufficiently large $m$.

\smallskip
Suppose that $\#S=n$ and the assertion holds if $\#S\leq n-1$.
If $(D, S)$ is not spherical and Eulidean,
then the assertion follows from Theorem \ref{theo:Paris Varghese formula} (1) and the inductive hypothesis.
If $(D, S)$ is spherical or Euclidean,
then the assertion follows from the equalities \eqref{eq:sequence spherical}, \eqref{eq:limit spherical}, and the inductive hypothesis.
\end{proof}

\begin{theo}[{\cite[Hurwitz's Theorem, p.231]{Gamelin2001}}]\label{theo:Hurwitz theorem}
Suppose $\{f_k(z)\}$ is a sequence of analytic functions on a domain $D$ that converges normally to $f(z)$,
and suppose that $f(z)$ has a zero of order $N$ at $z_0$.
Then there exists $\rho>0$ such that for $k$ large,
$f_k(z)$ has exactly $N$ zeros in the disk $\{\abs{z-z_0}<\rho\}$,
counting multiplicity, and these zeros converge to $z_0$ as $k\to \infty$.
\end{theo}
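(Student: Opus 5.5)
The plan is to prove this with the argument principle, or equivalently Rouché's theorem, on a small circle around $z_0$. Since $f$ has a zero of finite order $N$ at $z_0$, it is not identically zero near $z_0$. Its zeros are therefore isolated, so there is $\rho>0$ with $\{\abs{z-z_0}\le\rho\}\subset D$ on which $f$ has no zero other than $z_0$. In particular $f$ does not vanish on the circle $C_\rho=\{\abs{z-z_0}=\rho\}$. By compactness, $\delta=\min_{C_\rho}\abs{f}>0$.

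Next, normal convergence gives uniform convergence of $f_k$ to $f$ on the compact set $\{\abs{z-z_0}\le\rho\}$. Hence there is $k_0$ such that $\abs{f_k(z)-f(z)}<\delta\le\abs{f(z)}$ for all $z\in C_\rho$ and all $k\ge k_0$. Rouché's theorem then says that $f_k$ and $f$ have the same number of zeros, counted with multiplicity, in $\{\abs{z-z_0}<\rho\}$, namely $N$. If one prefers to avoid Rouché, one can argue as follows. By Weierstrass's theorem, $f_k'\to f'$ uniformly on $C_\rho$, so $f_k'/f_k\to f'/f$ uniformly on $C_\rho$, because $\abs{f_k}\ge\delta/2$ there for large $k$. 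The integrals $\frac{1}{2\pi i}\oint_{C_\rho} f_k'/f_k\,dz$ are integers converging to $\frac{1}{2\pi i}\oint_{C_\rho} f'/f\,dz=N$, so they eventually equal $N$.

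For the convergence of the zeros to $z_0$, fix any $0<\rho'<\rho$ and apply the same argument with $\rho'$ in place of $\rho$. For all sufficiently large $k$, the disk $\{\abs{z-z_0}<\rho'\}$ already contains $N$ zeros of $f_k$. Since the larger disk $\{\abs{z-z_0}<\rho\}$ contains exactly $N$ zeros, all of them lie within distance $\rho'$ of $z_0$. As $\rho'$ is arbitrary, the zeros converge to $z_0$.

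The only delicate point is the choice of $\rho$. We must guarantee that $f$ is bounded away from zero on the circle, which needs both isolatedness of zeros (coming from the finite order $N$) and compactness. Everything after that is a standard uniform-estimate argument.
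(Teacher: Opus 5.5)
Your proof is correct and complete. It isolates $z_0$ on a closed disk, bounds $|f|$ below on the boundary circle, applies Rouché (or the argument principle with $f_k'/f_k\to f'/f$ uniformly on the circle), and then shrinks the radius to get convergence of the zeros; the paper gives no proof of its own and simply cites Gamelin, where the theorem is proved by essentially this argument.
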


\begin{theo}
The growth rate $\tau:\mathcal{D}_n\to \R_{\geq 1}$ is continuous.
\end{theo}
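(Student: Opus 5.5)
Since $\mathcal{D}_n$ is metrizable, it suffices to show sequential continuity. Take a sequence $(D_k,S)\to(D,S)$ in $\mathcal{D}_n$ and show $\tau(D_k,S)\to\tau(D,S)$. Passing to subsequences, it suffices to treat a sequence along which the associated Dyer matrices are entrywise eventually constant or strictly increasing (Corollary \ref{cor:convergence in Dyer graph}). Then, for large $k$, $(D_k,S)\preceq(D,S)$: the underlying graphs coincide and the weights of $\Gamma_k$ are bounded by those of $\Gamma$. By Theorem \ref{theo:monotonicity of growth}, $\tau(D_k,S)$ is then nondecreasing in $k$ and bounded above by $\tau(D,S)$. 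So it remains to prove $\liminf_k \tau(D_k,S)\ge\tau(D,S)$.

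If $(D,S)$ is spherical or Euclidean, Lemma \ref{lem:growth of spherical or Euclidean Dyer system} gives $\tau(D,S)=1$, and the inequality is trivial because $\tau\ge1$. So assume $(D,S)$ is neither. Then $D$ is infinite, and $\tau(D,S)>1$ by Lemma \ref{lem:growth of spherical or Euclidean Dyer system}. Set $z_0=\tau(D,S)^{-1}\in(0,1)$. Since $f_{(D,S)}$ has nonnegative coefficients and radius of convergence $z_0$, Pringsheim's theorem makes $z_0$ a singularity of $f_{(D,S)}$. As $f_{(D,S)}$ is rational, $z_0$ is a pole, so $F_{(D,S)}=1/f_{(D,S)}$ has a zero at $z_0$.

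By Lemma \ref{lem:convergence of growth series}(1), $F_{(D_k,S)}\to F_{(D,S)}$ normally on $\{\abs{z}<1\}$. Here I must handle the fact that these rational functions may have poles in the disk. Lemma \ref{lem:convergence of growth series}(2) says the poles of all the $F_{(D_k,S)}$ and of $F_{(D,S)}$ in the disk form a locally finite set. So I choose a small disk $U$ around $z_0$, inside $\{\abs z<1\}$, containing none of these poles. On $U$ the functions are analytic and converge normally, and $F_{(D,S)}\not\equiv0$ (since $F_{(D,S)}(0)=1$). Hurwitz's theorem (Theorem \ref{theo:Hurwitz theorem}) then yields zeros $z_k\in U$ of $F_{(D_k,S)}$ with $z_k\to z_0$.

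A zero of $F_{(D_k,S)}$ is a pole of $f_{(D_k,S)}$, so the radius of convergence of $f_{(D_k,S)}$ is at most $\abs{z_k}$. Hence $\tau(D_k,S)\ge \abs{z_k}^{-1}\to z_0^{-1}=\tau(D,S)$, and in particular $D_k$ is infinite for large $k$. This gives the required $\liminf$ inequality and proves continuity.

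The main obstacle is the Hurwitz step. Normal convergence is only asserted on the disk, possibly in the presence of poles, so one has to isolate a pole-free neighborhood of $z_0$ uniformly in $k$, which is exactly what Lemma \ref{lem:convergence of growth series}(2) provides. One must also check that $z_0$ is a genuine zero of $F_{(D,S)}$ rather than being cancelled. This follows from Pringsheim together with rationality, since a singularity of a rational function is a pole.
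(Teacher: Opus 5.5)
Your argument is correct and is essentially the paper's proof. Monotonicity, applied after passing to a subsequence with $(D_k,S)\preceq(D,S)$ (which the paper uses only implicitly), bounds $\tau(D_k,S)$ above by $\tau(D,S)$. Hurwitz's theorem on a pole-free disk around $z_0=\tau(D,S)^{-1}$, via Lemma~\ref{lem:convergence of growth series}, gives the matching lower bound. You phrase this directly as a $\liminf$ estimate where the paper argues by contradiction, and you justify via Pringsheim that $z_0$ is a zero of $F_{(D,S)}$, which the paper simply asserts.

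One point that you and the paper both leave implicit: local finiteness of the poles does not by itself prevent $z_0$ from being a pole of some $F_{(D_k,S)}$. This is excluded because $f_{(D_k,S)}\ge 1$ on $[0,\tau(D_k,S)^{-1})$ and $\tau(D_k,S)^{-1}\ge z_0$ by your monotonicity bound; if equality holds, $z_0$ is a zero rather than a pole of $F_{(D_k,S)}$.
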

\begin{proof}
Let $\{(D_k, S)\}_{k\ge 1}$ be a convergent sequence of $n$-marked Dyer systems,
and let $(D, S)$ be its limit.

\medskip
If $(D, S)$ is spherical or Euclidean,
by Lemma \ref{lem:subspace of spherical or Euclidean Dyer systems},
we assume that $(D_k, S)$ is also spherical or Euclidean,
and hence the assertion follows from Lemma \ref{lem:growth of spherical or Euclidean Dyer system}.

\medskip
Suppose that $(D, S)$ is not spherical and Euclidean.
By Corollary \ref{cor:convergence in Dyer graph} and Lemma \ref{lem:growth of spherical or Euclidean Dyer system},
the Dyer system $(D_k, S)$ is not spherical and Euclidean, and the growth rates $\tau(D_k, S), \tau(D, S)>1$.
Set $r=\tau(D, S)^{-1}$ and $r_k=\tau(D_k, S)^{-1}$,
so that $r$ and $r_k$ are zeros of $F_{(D, S)}(z)$ and $F_{(D_k, S)}(z)$, respectively.
In order to obtain a contradiction, suppose that $r_k$ does not converge to $r$. 
Fix $\varepsilon>0$ such that the disk $\{\abs{z-r}<\varepsilon\}$ does not contain $r_k$ for any $k$.
By Lemma \ref{lem:convergence of growth series} (2), 
we assume that any pole of $F_{(D, S)}(z)$ and $F_{(D_k, S)}(z)$ is not contained in the disk $\{\abs{z-r}<\varepsilon\}$.
Then $\{F_{(D_k, S)}(z)\}$ is a sequence of analytic functions on the disk $\{\abs{z-r}<\varepsilon\}$ that converges normally to $F_{(D, S)}(z)$ by Lemma \ref{lem:convergence of growth series} (1).
Theorem \ref{theo:Hurwitz theorem} implies that the disk $\{\abs{z-r}<\varepsilon\}$ contains at least one zero $\zeta_k$ of $F_{(D_k, S)}(z)$ for sufficiently large $k$.
It follows from the triangle inequality that
\begin{equation}
\abs{\zeta_k}\leq \abs{\zeta_k-r}+r<\varepsilon+r. \label{eq:zero bound}
\end{equation}
By Theorem \ref{theo:monotonicity of growth},
we see that $r+\varepsilon\leq r_k$.
Therefore,
together with the inequality \eqref{eq:zero bound},
we obtain that $\abs{\zeta_k}<r_k$.
This contradicts the fact that $r_k$ has the smallest absolute value among the zeros of $F_{(G_k ,S)}(z)$.
\end{proof}

\section*{Acknowledgement}
The author thanks IMUS-María de Maeztu grant CEX2024-001517-M - Apoyo a Unidades de Excelencia María de Maezu for supporting this research, funded by MICIU/AEI/10.13039/501100011033.
This work was supported by JSPS Grant-in-Aid for Early-Career Scientists Grant Number JP25K17258. 

\bibliographystyle{plain}
\bibliography{reference}
\end{document}